\numberwithin{equation}{section}
\newtheorem{thm}{Theorem}[section]
\newtheorem{lem}[thm]{Lemma}
\newtheorem{rem}[thm]{Remark}
\newtheorem{ass}[thm]{Assumption}
\newcommand{\mcc}{\mathcal{C}}
\newcommand{\mcf}{\mathcal{F}}
\newcommand{\mcl}{\mathcal{L}}
\newcommand{\mcm}{\mathcal{M}}
\newcommand{\mfp}{\mathfrak{p}}
\newcommand{\mfL}{\mathfrak{L}}
\newcommand{\mbbe}{\mathbb{E}}
\newcommand{\mbbh}{\mathbb{H}}
\newcommand{\mbbn}{\mathbb{N}}
\newcommand{\mbbr}{\mathbb{R}}
\newcommand{\mbbrp}{\mathbb{R}_{+}}
\newcommand{\mbbu}{\mathbb{U}}
\newcommand{\mbby}{\mathbb{Y}}
\newcommand{\mbbz}{\mathbb{Z}}
\newcommand{\al}{\alpha}
\newcommand{\del}{\delta}
\newcommand{\sig}{\sigma}
\newcommand{\ep}{\epsilon}
\newcommand{\D}{\Delta}
\newcommand{\Sig}{\Sigma}
\newcommand{\lam}{\lambda}
\newcommand{\gam}{\gamma}
\newcommand{\Gam}{\Gamma}
\newcommand{\p}{\partial}
\newcommand{\cil}{\xrightarrow{\mcl}} 
\newcommand{\cip}{\xrightarrow{\pr}} 
\newcommand{\argmin}{\mathop{\rm argmin}} 
\newcommand{\argmax}{\mathop{\rm argmax}}
\newcommand{\diag}{\mathop{\rm diag}} 
\newcommand{\tr}{\mathop{\rm tr}} 
\def\ds#1{\displaystyle{#1}} 
\def\nn{\nonumber}
\def\wp{Wiener process}
\def\sumj{\sum_{j=1}^{n}}
\def\pr{\mathbb{P}}
\def\E{\mathbb{E}}
\def\var{\mathrm{var}}
\def\dim{\mathrm{dim}}
\def\tz{\theta_{0}}
\def\tes{\hat{\theta}_{n}}
\def\tet{\tilde{\theta}_{n}}
\def\aet{\tilde{\alpha}_{n}}
\def\bet{\tilde{\beta}_{n}}
\def\aes{\hat{\alpha}_{n}}
\def\bes{\hat{\beta}_{n}}
\def\mqbic{\mathrm{mQBIC}_{n}}
\def\mbic{\mathrm{mBIC}_{n}}
\title[Data driven time scale in Gaussian quasi-likelihood inference]
{Data driven time scale in Gaussian quasi-likelihood inference}
\author[S. Eguchi]{Shoichi Eguchi}
\address{
Center for Mathematical Modeling and Data Science, Osaka University, 
1-3 Machikaneyama-cho, Toyonaka City, Osaka 560-8531, Japan.}
\email{eguchi@sigmath.es.osaka-u.ac.jp}
\author[H. Masuda]{Hiroki Masuda}
\address{Faculty of Mathematics, Kyushu University, 744 Motooka Nishi-ku Fukuoka 819-0395, Japan}
\email{hiroki@math.kyushu-u.ac.jp}
\date{\today}
\keywords{Bayesian information criterion, Ergodic diffusion process, Gaussian quasi-likelihood, Model-time scale}
\begin{document}
\setlength{\baselineskip}{4.5mm}
\setlength{\marginparwidth}{2cm}

\maketitle

\begin{abstract}
We study parametric estimation of ergodic diffusions observed at high frequency.
Different from the previous studies, we suppose that sampling stepsize is unknown, thereby making the conventional Gaussian quasi-likelihood not directly applicable.
In this situation, we construct estimators of both model parameters and sampling stepsize in a fully explicit way, and prove that they are jointly asymptotically normally distributed. High order uniform integrability of the obtained estimator is also derived.
Further, we propose the Schwarz (BIC) type statistics for model selection and show its model-selection consistency.
We conducted some numerical experiments and found that the observed finite-sample performance well supports our theoretical findings.
Also provided is a real data example.
\end{abstract}


\section{Introduction}

Consider the $d$-dimensional parametric ergodic diffusion model given by
\begin{align}
dX_{t}=\sqrt{\tau}a(X_{t},\al)dw_{t}+\tau b(X_{t},\theta)dt,
\nn
\end{align}
where $\theta:=(\al,\beta)$ is the statistical parameter of interest, whose true value is assumed to exist and denoted by $\tz=(\al_{0},\beta_{0})$, $\tau>0$ is a nuisance parameter, and $w$ is an $d$-dimensional standard {\wp}.
Suppose that we observe an equally spaced high-frequency data $(X_{t_{j}})_{j=0}^{n}$ for $t_{j}=t^{n}_{j}=jh_{0,n}$ where $h_{0}=h_{0,n}$ is an unknown sampling stepsize fulfilling that
\begin{align}
\text{$T_{n}:=nh_{0}\to\infty$ \quad and \quad $nh_{0}^{2}\to 0$.}
\label{hm:sampling.design}
\end{align}
We are interested in developing a methodology to estimate $\tz$ from $(X_{t_{j}})_{j=0}^{n}$ \textit{with leaving $h_{0}$ and $\tau$ unspecified}.
The nuisance parameter $\tau$ measures model-time scale: the time-rescaled process $X^{\tau}$ with $X^{\tau}_{t}:=X_{t/\tau}$ satisfies the stochastic differential equation
\begin{equation}
dX^{\tau}_{t}=a(X_{t}^{\tau},\al)dw^{\tau}_{t} + b(X^{\tau}_{t},\theta)dt,
\nonumber
\end{equation}
where $w^{\tau}_{\cdot}:=\sqrt{\tau}w_{\cdot/\tau}$ is a standard {\wp}.
As explained later on (in particular, see Remark \ref{se:rem.taudiff}), in the proposed estimation procedure it is quite natural and even necessary to incorporate the nuisance parameter $\tau$.

There exists a large literature studying parametric estimation of $\tz$ based on $(X_{t_{j}})_{j=0}^{n}$ through the small-time approximation of the conditional mean and variance (location and scale) taken under the law of $X$ associated with $\theta$:
\begin{align}
\E_{\theta}(X_{t_{j}}| X_{t_{j-1}}) &\approx X_{t_{j-1}} + hb(X_{t_{j-1}},\theta), \nn\\
\var_{\theta}(X_{t_{j}}| X_{t_{j-1}}) &\approx ha^{\otimes 2}(X_{t_{j-1}},\al),
\nonumber
\end{align}
where $A^{\otimes}:=AA^{\top}$ for a matrix $A$ with $\top$ denoting the transpose, and where
\begin{equation}
h=h_{n}:=\tau h_{0} \qquad (nh\to\infty, \ nh^{2}\to0).
\nonumber
\end{equation}
Due to the Gaussianity of the driving noise process, this naturally leads to the logarithmic Gaussian quasi-likelihood function (GQLF) based on the small-time Gaussian approximation
\begin{align}
\mcl(X_{t_{j}}|X_{t_{j-1}}=x) \approx N_{d}\left( x+hb(x,\theta),\, ha^{\otimes 2}(x,\al)\right)
\label{hm:gauss.approx}
\end{align}
for the unknown transition probability distribution.
Let us note that the high-frequency setting enables us to develop a unified strategy of parameter estimation for a quite general class of non-linear diffusions.
Under appropriate regularity conditions, this quasi-likelihood is known to be \textit{theoretically} asymptotically efficient.
See \cite{Gob02}, \cite{Kes97}, \cite{UchYos12}, and the references therein.

The existing \textit{theoretical} literature basically supposes that the unknown quantities $h_{0}$ and $\tau$ are given {\it a priori}, that is, the existing theories have been developed under known $h(=\tau h_{0})$.
In practice, the value of $\tau$ is unknown, and there is no absolute correspondence between sampling stepsize and a given time series data associated with a time stamp (the time at which the data is observed, a typical format being \texttt{YYYYMMDD hh:mm:ss}).
One would then get confused with the \textit{practical} problem ``what value is to be assigned to $h_{0}$'',
which in the present case \eqref{hm:sampling.design} should not be too large and too small; for example, which value is to be selected to represent $h_{0}$ to be one minute?
A common \textit{consensus} may be to subjectively assign $h$ with a sufficiently small value $\ll 1$ in an \textit{arbitrary} manner satisfying \eqref{hm:sampling.design}.
Obviously, different values of $h$ lead to different finite-sample performances of estimates;
this is an \textit{arbitrariness problem} which has not received much attention in the literature of high-frequency statistics under $T_{n}\to\infty$, though it might not be of big concern if one has a practical reasoning for assigning a specific value for $h_{0}$
(e.g. when one has several daily-data sets over ten years, then we set $T_{n}=10$ with $h_{0}=1/365$ ($\tau=1$), etc.).
In this respect, $h=\tau h_{0}$ could be regarded as an unknown quantity to be selected in a certain appropriate manner, giving rise to a statistical inference problem of the \textit{GQLF-model time scale} $h$ against \textit{actual-time scale}; of course, cases of random-sampling models and time-changed type processes have the same problem.
Despite its practical importance, theoretical study on unknown $h$ seems to have been lacking and/or ignored in the literature of statistics for high-frequency data.
Single subjective choice of $h$ would be a rather subtle problem as we are considering vanishing $h$ (see Remark \ref{hm:rem_inconsistency}), hence so would be single selection of $h$ as a fine-tuning parameter indexing the statistical model.

\medskip

The objective of this paper is to clarify ``when and how'' we can sidestep the subjective choice of $h$ through the GQLF based on \eqref{hm:gauss.approx}.
In the current case, since the Gaussian quasi-likelihood only looks at the mean and variance structure, we should note that when $h_0$ is unknown the GQLF can enables us only to consistently estimate the product $\tau h_{0}$, making the parameter $\tau$ itself non-identifiable.
Nevertheless, under suitable conditions it is possible to develop an asymptotic distributional theory not only for the parameter $\theta$ of interest but also $h$ as well.
This will be done through the modified logarithmic Gaussian quasi-likelihood function (mGQLF), which is defined through profiling out the variable $h$.
The proposed mGQLF is fully explicit, while producing an estimator having the following good properties under appropriate regularity conditions:
\begin{itemize}
\item The proposed estimator of $\theta=(\al,\beta)$ is asymptotically normally distributed at the same rate as in cases where $\tau$ and $h_{0}$ are known, namely $\sqrt{n}$ for $\al$ and $\sqrt{nh_0}$ for $\beta$, both of which are well-known to be best possible;
\item The value $h$ can be quantitatively estimated without any specific form of $n\mapsto h=h_{n}$.
\end{itemize}
The results are made precise in Theorem \ref{hm:thm2.diffusion.an} in Section \ref{hm:sec_ergo.diff.joint}, 
from which, in particular, it is trivial that we can estimate $\tau$ at rate $\sqrt{n}$ as soon as $h_{0}$ is subjectively given {\it a priori} (see Remark \ref{hm:rem_add1}).
Once we get an estimate of $h$, we can obtain the formal approximate predictive distribution from \eqref{hm:gauss.approx}. 
There we will also provide a two-step estimation procedure which has been well-developed and is nowadays standard in cases where $h_{0}$ and $\tau=1$ are known (see \cite{KamUch15} and \cite{UchYos12}).
Moreover, we also provide handy sufficient conditions for the polynomial type large deviation inequality (PLDI) associated with the mGQLF, which in particular guarantees convergence of moments of the proposed estimator; see \cite[Section 6]{Yos11} for sufficient conditions in the case where $h_{0}$ and $\tau=1$ are known.

Through the proposed estimator, we can give an interpretation of the model-time scale.
Further, the proposed estimation procedure is simple enough, and it should be potentially applicable to models other than the ergodic diffusion, whenever an explicit GQLF is used so that we can remove its dependence on $h$ (see Section \ref{hm:sec_ergo.diff.joint}), possibly including non-ergodic continuous semimartingale models (\cite{GenJac93} and \cite{UchYos13}) and L\'{e}vy driven stochastic differential equation (\cite{Mas13} and \cite{MasUeh17}).
This is also the case for the stepwise estimation procedure considered in Section \ref{hm:sec_ergo.diff.2step}, as long as high-frequency sampling is concerned.

Another objective of this paper is Schwarz type model comparison for the ergodic diffusion models with unknown sampling stepsize.
The classical Bayesian information criterion (BIC), which is derived based on the Bayesian principle for model selection, is used to look for better model description.
We will introduce the BIC type statistics through the stochastic expansion of the proposed mGQLF.
In cases where the candidate models are given by the ergodic diffusion models with known $h$, \cite{EguMas18a} has introduced BIC type statistics, and 
studied their model-selection consistency.
We note that many authors have investigated the information criteria concerning sampled data from stochastic process models; see, for example \cite{SeiKom07}, \cite{Uch10}, \cite{UchYos01}, \cite{UchYos06}, and\cite{UchYos16}.
Still, there has been no previous work concerned with unknown sampling stepsize $h_{0}$.

\medskip


This paper is organized as follows.
In Section \ref{hm:sec_ergodiff}, we describe the basic model setup, propose the modified logarithmic Gaussian quasi-likelihood and parameter estimation method, and then present asymptotic properties of the estimators of the model parameter and $h$.
Furthermore, we give the sufficient conditions of the PLDI under the modified logarithmic Gaussian quasi-likelihood.
In Section \ref{hm:sec_qbic} we derive the BIC type statistics in case where $h$ is unknown and discuss the model selection consistency with respect to the true model.
In Section \ref{se:simulation}, some numerical experiments are carried out to check the numerical performance of our asymptotic results.
All the proofs are given in Section \ref{hm:sec_proofs}.

\medskip

Here are some basic notations used throughout this paper.
Let $\D_{j}Y:=Y_{t_{j}}-Y_{t_{j-1}}$ for a process $Y$, and $f_{j-1}(\theta):=f(X_{t_{j-1}},\theta)$ for any measurable function on $f:\mbbr^{d}\times\Theta$.
We denote by $|A|$ the determinant of a square matrix $A$ and by $\|A\|$ the Frobenius norm of a matrix $A$.
We write $A[B]=\tr(AB^{\top})$ for the matrices $A$ and $B$ of the same sizes.
The symbol $\p_{a}^{k}$ stands for $k$-times partial differentiation with respect to variable $a$.
We denote by $C$ a universal positive constant, which may change at each appearance, and write $A_{n} \lesssim B_{n}$ if $A_{n}\le CB_{n}$ for every $n$ large enough.

\section{Gaussian quasi-likelihood inference with unknown time scale}\label{hm:sec_ergodiff}

\subsection{Setup}

Consider a $d$-dimensional diffusion process given by
\begin{equation}
dX_{t}=\sqrt{\tau}a(X_{t},\alpha)dw_{t}+\tau b(X_{t},\theta)dt, \quad X_{0}=x_{0},
\label{hm:sde1}
\end{equation}
where $\theta:=(\alpha,\beta)\in\Theta_{\alpha}\times\Theta_{\beta}\subset\mbbr^{p_{\alpha}}\times\mbbr^{p_{\beta}}=\mbbr^{p}$ is an unknown parameter,
$a$ is a symmetric $\mbbr^{d}\otimes\mbbr^{d}$-valued function on $\mbbr^{d}\times\Theta_{\alpha}$, $b$ is an $\mbbr^{d}$-valued function on $\mbbr^{d}\times\Theta_{\alpha}\times\Theta_{\beta}$, $\tau>0$ is an unknown constant, $w$ is an $d$-dimensional standard Wiener process, and $x_{0}$ is a random variable independent of $w$.
We assume that \eqref{hm:sampling.design} holds and that there exists a value $\tz=(\alpha_{0},\beta_{0})\in\Theta_{\al}\times\Theta_{\beta}$ which induces the distribution of $X$, which we denote by $\pr$, and also that $\Theta_{\al}$ and $\Theta_{\beta}$ are bounded convex domains.

Let $S(x,\al):=a^{\otimes 2}(x,\al)$ and denote by $\lam_{\min}\{S(x,\al)\}$ the minimum eigenvalue of $S(x,\al)$.

\begin{ass}[Smoothness and non-degeneracy]
The coefficients $a$ and $b$ satisfy that $a,b\in\mcc^{2,3}(\mbbr^{d}\times \Theta)$, and they, together with their partial derivatives, can be continuously extended to the boundary of $\Theta$ as functions of $\theta$. Moreover, the following conditions hold.
\begin{itemize}
\item[(i)] For $x_{1},x_{2}\in\mbbr^{d}$,
\begin{align*}
\sup_{\alpha\in\overline{\Theta}_{\alpha}}\|a(x_{1},\alpha)-a(x_{2},\alpha)\|+\sup_{\theta\in\overline{\Theta}_{\alpha}\times\overline{\Theta}_{\beta}}\|b(x_{1},\theta)-b(x_{2},\theta)\|
\lesssim \|x_{1}-x_{2}\|.
\end{align*}
\item[(ii)] 
There exists a constant $C_{0}\ge 0$ such that for $x\in\mbbr^{d}$ and $i\in \{0,1,2,3\}$ and $j \in \{0,1,2\}$,
\begin{align}
& \sup_{\alpha\in\overline{\Theta}_{\alpha}}\|\p_{x}^{j}\p_{\alpha}^{i}a(x,\alpha)\| + \sup_{\theta\in\overline{\Theta}_{\alpha}\times\overline{\Theta}_{\beta}}\|\p_{x}^{j}\p_{\alpha}^{i}\p_{\beta}^{i}b(x,\theta)\|  
\lesssim (1+\|x\|)^{C_{0}}, \nn\\
& \inf_{\alpha\in\overline{\Theta}_{\alpha}}\lam_{\min}\{S(x,\al)\} \gtrsim (1+\|x\|)^{-C_{0}}.
\nonumber
\end{align}
\end{itemize}
\label{Ass1}
\end{ass}

The Gershgorin circle theorem says that
\begin{equation}
\inf_{\al}\lam_{\min}\{S(x,\al)\} \ge \inf_{\al} \min_{1\le i\le d}\bigg( S_{ii}(x,\al) - \sum_{j\ne i}|S_{ij}(x,\al)| \bigg),
\nonumber
\end{equation}
so that an easy sufficient condition for the last inequality in (ii) is that this lower bound is bounded below by $C(1+|x|)^{-C_{0}}$.

\begin{ass}[Stability]
There exists a probability measure $\pi=\pi_{\tz}$ such that
\begin{align*}
\frac{1}{T}\int_{0}^{T}g(X_{t})dt\cip\int_{\mbbr^{d}}g(x)\pi(dx),\qquad T\to\infty,
\end{align*}
for any measurable function $g \in L^{1}(\pi)$.
In addition, $\sup_{t\in\mbbrp}\E(\|X_{t}\|^{q}) <\infty$ for all $q>0$ in case where the constant $C_{0}$ in Assumption \ref{Ass1}(ii) is positive.
\label{Ass2p}
\end{ass}

It follows from Assumptions \ref{Ass1} and \ref{Ass2p} that
\begin{equation}
\frac{1}{n}\sumj g(X_{t_{j-1}}) \cip \int g(x)\pi(dx),\qquad n\to\infty,
\label{hm:disc.LLN}
\end{equation}
for any measurable function $g$ of at most polynomial growth (see \cite[p.1598]{Mas13}).
There are several polynomially ergodic diffusions with bounded smooth coefficients and uniformly elliptic diffusion coefficient, in which case we may set $C_{0}=0$ in Assumption \ref{Ass1} while the boundedness of moments in Assumption \ref{Ass2p} may fail to hold (see \cite{Ver97}).
In general, one can consult \cite{Gob02} and \cite{Ver87} for easy conditions for the boundedness of (more strongly, exponential) moments;
see also Lemma \ref{hm:lem_pldi-g.exp.ergo}.

\subsection{Joint estimation}\label{hm:sec_ergo.diff.joint}

The logarithmic GQLF (\cite{Kes97}, \cite{UchYos12}) of the true model \eqref{hm:sde1} based on the approximation \eqref{hm:gauss.approx} is given by
\begin{align}
\mbbh_{n}(\theta;h)
&=-\frac{1}{2}\sumj\left\{\log\big|2\pi hS_{j-1}(\alpha)\big|+\frac{1}{h}S_{j-1}^{-1}(\alpha)\left[\big(\D_{j}X-hb_{j-1}(\theta)\big)^{\otimes2}\right]\right\}.
\label{ErLf}
\end{align}
Our objective is to estimate $\theta$ and $h$ simultaneously under \eqref{hm:sampling.design}.

The function $h\mapsto \mbbh_{n}(\theta;h)$ is a.s. smooth in $h>0$.
In order to profile out $h$ from $\mbbh_{n}(\theta;h)$, we consider optimizing $h\mapsto\mbbh_{n}(\theta;h)$ with $\theta$ fixed:
the equation $\p_{h}\mbbh_{n}(\theta;h)=0$ with respect to $h>0$ is equivalent to a certain quadratic equation which admits the a.s. positive explicit solution
\begin{align}
h^{\star}(\theta)
&:= \frac{1}{2}\bigg(\frac{1}{n}\sumj S_{j-1}^{-1}(\alpha)\big[b_{j-1}(\theta)^{\otimes2}\big]\bigg)^{-1}
\nn\\
&{}\qquad 
\cdot\bigg[ -d+\bigg\{
d^{2}+4\bigg(\frac{1}{n}\sumj S_{j-1}^{-1}(\alpha)\big[(\D_{j}X)^{\otimes2}\big]\bigg)\bigg(\frac{1}{n}\sumj S_{j-1}^{-1}(\alpha)\big[b_{j-1}(\theta)^{\otimes2}\big]\bigg)
\bigg\}^{1/2} \bigg].
\nn
\end{align}
This is somewhat complicated, hence under the high-frequency setting we suggest approximating $h^{\star}(\theta)$ by the leading term
\begin{equation}
h(\al):=\frac{1}{nd}\sumj S_{j-1}^{-1}(\alpha)\big[(\D_{j}X)^{\otimes2}\big].
\nn
\end{equation}
Indeed, in Section \ref{hm:sec_proofs} we will observe that $h^{\star}(\theta) = h(\al) + O_{p}(h_{0}^{2})$ with $h(\al)=O_{p}(h_{0})$ uniformly in $\al$.

Then we define the fully explicit modified GQLF (mGQLF) $\tilde{\mbbh}_{n}(\theta)$ by replacing $h$ in $\mbbh_{n}(\theta;h)$ with $h(\alpha)$:
\begin{align}
\tilde{\mbbh}_{n}(\theta)&:=\mbbh_{n}\big(\theta;h(\alpha)\big) \nn\\
&= -\frac{nd}{2}\{1+\log(2\pi) \} - \frac{1}{2}\bigg( \sumj\log\big|S_{j-1}(\alpha)\big|+nd\log h(\al) \bigg) \nn\\
&\quad+\bigg\{\sumj S_{j-1}^{-1}(\alpha)\big[\D_{j}X,b_{j-1}(\theta)\big]
-\frac{h(\al)}{2}\bigg(\sumj S_{j-1}^{-1}(\alpha)\big[b_{j-1}(\theta)^{\otimes2}\big]\bigg)\bigg\}
\nn\\
&= -\frac{nd}{2}\{1+\log(2\pi) \} 
-\frac{1}{2}\bigg\{\sumj\log\big|S_{j-1}(\alpha)\big|+nd\log\bigg(\frac{1}{nd}\sumj S_{j-1}^{-1}(\alpha)\big[(\D_{j}X)^{\otimes2}\big]\bigg)\bigg\} \nn\\
&\quad+\bigg\{\sumj S_{j-1}^{-1}(\alpha)\big[\D_{j}X,b_{j-1}(\theta)\big]
\nn\\
&{}\qquad
-\frac{1}{2}\bigg(\frac{1}{nd}\sumj S_{j-1}^{-1}(\alpha)\big[(\D_{j}X)^{\otimes2}\big]\bigg)\bigg(\sumj S_{j-1}^{-1}(\alpha)\big[b_{j-1}(\theta)^{\otimes2}\big]\bigg)\bigg\}.
\label{hm:mGQMLE.def}
\end{align}
Correspondingly, we define the modified Gaussian quasi-maximum likelihood estimator $\tet$ (mGQMLE) by any maximizer of $\tilde{\mbbh}_{n}$:
\begin{align*}
\tet=(\tilde{\alpha}_{n},\tilde{\beta}_{n})\in\argmax_{\theta\in\overline{\Theta}}\tilde{\mbbh}_{n}(\theta),
\end{align*}
computations of which does not require the value $h$.
The definition of $\tet$ approximately corresponds to a solution to the system of estimating equations
$\left( \p_{h}\mbbh_{n}(\theta;h),\, \p_{\theta}\mbbh_{n}(\theta;h) \right) = (0,0)$ with respect to $\theta$.
We emphasize that the approximation of $h^{\star}(\theta)$ by $h(\al)$ provides us with a very simple form, reducing computational cost in optimization.

\medskip

We need the following identifiability condition with additional non-degeneracy.

\begin{ass}[Identifiability and non-degeneracy]
The following conditions hold for the invariant distribution $\pi(dx)$ in Assumption \ref{Ass2p}.
\begin{itemize}
\item[(i)] The functions $x\mapsto \tr\{S^{-1}(x,\al)S(x,\al_{0})\}$ for $\al\ne\al_{0}$
and $x\mapsto \tr\{S^{-1}(x,\al_{0})\p_{\al}S(x,\al_{0})\}$ are not constant over the support of $\pi$.
\item[(ii)] If $b(\cdot,\alpha_{0},\beta) = b(\cdot,\alpha_{0},\beta_{0})$ $\pi$-a.e., then $\beta=\beta_{0}$.
\end{itemize}
\label{Ass3}
\end{ass}

We implicitly assume that the support of $\pi$ is known a priori; in many cases it equals $\mbbr^{d}$, or $\mbbr_{+}$ when $d=1$.
Moreover, the identifiability condition of $\beta$ is standard.
By contrast, as for $\al$ it is insufficient to only suppose as usual that ``$a(\cdot,\al)=a(\cdot,\al_{0})$ $\pi$-a.e. implies $\al=\al_{0}$''.
Concerning Assumption \ref{Ass3}(i), the former non-constancy ensures the unique maxima of the quasi-relative entropy $\tilde{\mbby}_{0}^{1}(\al)$,
and the latter one does the positive definiteness of the quasi-Fisher information matrix $\tilde{\Gam}_{1,0}$ of $\al$; see Section \ref{proof.AN} for details.
Note that Assumption \ref{Ass3}(i) appropriately excludes the presence of a multiplicative parameter in diffusion coefficient as well as constant diffusion coefficient, both of which are, when they are scalar, to be absorbed into the nuisance parameter $\tau$; for example, $\al$ is non-identifiable in the case $S(x,\al)=\sum_{j=1}^{p_{\alpha}}\al_{j} S_{j}(x)$.

\begin{rem}{\rm
Let us consider the sufficient condition of the former non-constancy of Assumption \ref{Ass3}(i) in the case where the function $S(x,\alpha)$ is given by
\begin{align*}
S(x,\alpha)=\exp\left( \sum_{j=1}^{p_{\alpha}}\alpha_{j}S_{j}(x) \right),
\end{align*}
where $\exp(A):=\sum_{k=0}^{\infty}(k!)^{-1}A^{k}$ for a square matrix $A$, and $S_{1}(x),\ldots,S_{p_{\alpha}}(x)$ are $d\times d$ non-zero matrices.
If $S_{i}(x)S_{j}(x)=S_{j}(x)S_{i}(x)$ for any $x$ and $i,j\in\{1,\ldots,p_{\alpha}\}$,
we have
\begin{align}
& S^{-1}(x,\alpha)S(x,\alpha_{0}) = \exp\bigg( \sum_{j=1}^{p_{\alpha}}(\alpha_{j,0}-\alpha_{j})S_{j}(x) \bigg), \nn\\
& \tr\{S^{-1}(x,\alpha)S(x,\alpha_{0})\} = \sum_{k=1}^{d}\exp(\lambda_{k}^{\prime}(x,\alpha)),
\nonumber
\end{align}
where $\lambda_{1}^{\prime}(x,\alpha),\ldots,\lambda_{d}^{\prime}(x,\alpha)$ denote the eigenvalues of $\sum_{j=1}^{p_{\alpha}}(\alpha_{j,0}-\alpha_{j})S_{j}(x)$.
Hence, the former non-constancy of Assumption \ref{Ass3}(i) holds if the following conditions are satisfied:
\begin{itemize}
\item[(i)] For any $x$ and $i,j\in\{1,\ldots,p_{\alpha}\}$, $S_{i}(x)S_{j}(x)=S_{j}(x)S_{i}(x)$;
\item[(ii)] For any $\alpha\ne\alpha_{0}$, there exists a $k\leq d$ such that the function $x\mapsto\lambda_{k}^{\prime}(x,\alpha)$ is not constant.
\end{itemize}
}\qed
\end{rem}

\begin{rem}{\rm
Let us consider the sufficient condition of the former non-constancy of Assumption \ref{Ass3}(i) in the case where the function $S(x,\alpha)$ is given by
\begin{align*}
S(x,\alpha)=\exp\left( \sum_{j=1}^{p_{\alpha}}\alpha_{j}S_{j}(x) \right),
\end{align*}
where $\exp(A):=\sum_{k=0}^{\infty}(k!)^{-1}A^{k}$ for a square matrix $A$, and $S_{1}(x),\ldots,S_{p_{\alpha}}(x)$ are $d\times d$ non-zero matrices.
If $S_{i}(x)S_{j}(x)=S_{j}(x)S_{i}(x)$ for any $x$ and $i,j\in\{1,\ldots,p_{\alpha}\}$,
we have
\begin{align}
& S^{-1}(x,\alpha)S(x,\alpha_{0}) = \exp\bigg( \sum_{j=1}^{p_{\alpha}}(\alpha_{j,0}-\alpha_{j})S_{j}(x) \bigg), \nn\\
& \tr\{S^{-1}(x,\alpha)S(x,\alpha_{0})\} = \sum_{k=1}^{d}\exp(\lambda_{k}^{\prime}(x,\alpha)),
\nonumber
\end{align}
where $\lambda_{1}^{\prime}(x,\alpha),\ldots,\lambda_{d}^{\prime}(x,\alpha)$ denote the eigenvalues of $\sum_{j=1}^{p_{\alpha}}(\alpha_{j,0}-\alpha_{j})S_{j}(x)$.
Hence, the former non-constancy of Assumption \ref{Ass3}(i) holds if the following conditions are satisfied:
\begin{itemize}
\item[(i)] For any $x$ and $i,j\in\{1,\ldots,p_{\alpha}\}$, $S_{i}(x)S_{j}(x)=S_{j}(x)S_{i}(x)$;
\item[(ii)] For any $\alpha\ne\alpha_{0}$, there exists a $k\leq d$ such that the function $x\mapsto\lambda_{k}^{\prime}(x,\alpha)$ is not constant.
\end{itemize}
}\qed
\end{rem}

\begin{rem}{\rm 
For now, we note that subjective choice of $h$ is sensitive to identify $\al_{0}$.
Suppose that a positive sequence $h'=h'_{n}$ also satisfies \eqref{hm:sampling.design}, while $h^{\prime}/h_{0}\to c$ for some constant $c\neq1$.
Then, we can show that $n^{-1}\{\mbbh_{n}(\theta;h^{\prime})-\mbbh_{n}(\alpha_{0},\beta;h^{\prime})\}\cip\bar{\mbbh}_{0}^{1}(\alpha;c)$ for some $\bar{\mbbh}_{0}^{1}(\alpha;c)$ uniformly in $\al$. It can be seen that $\{\alpha_{0}\} \subsetneqq \argmax_{\alpha}\bar{\mbbh}_{0}^{1}(\alpha;c)$,
hence the inconsistency of any element in $\argmax_{\theta}\mathbb{H}_{n}(\theta;h')$.
Needless to say, the situation is even worse for the cases $h^{\prime}/h_{0}\to\infty$ and $h^{\prime}/h_{0}\to0$, where $n^{-1}\{\mbbh_{n}(\theta;h^{\prime})-\mbbh_{n}(\alpha_{0},\beta;h^{\prime})\}$ no longer has a proper limit.
\label{hm:rem_inconsistency}
}\qed\end{rem}

\medskip

If $h=\tau h_{0}$ is known, the estimator of $\beta$ has the convergence rate $\sqrt{nh}$. In the current setting where $h$ is unknown, we propose to estimate $h$ by
\begin{equation}
\tilde{h} := h(\aet) = \frac{1}{nd}\sumj S_{j-1}^{-1}(\aet)\big[(\D_{j}X)^{\otimes2}\big].
\nonumber
\end{equation}
In practice where $n$ is large enough, we may check whether or not the sampling condition \eqref{hm:sampling.design} by looking at the values $n\tilde{h}$ and $n\tilde{h}^{2}$ which are to be large and small enough, respectively.
If not, in order to make \eqref{hm:sampling.design} more likely we may formally ``shrink'' or ``spread'' $h$ through multiplying $X_{t_{j}}$ by some constant $c>0$:
with replacing $X_{t_{j}}$ by $cX_{t_{j}}$, we have
\begin{equation}
\tilde{h} = \frac{c^{2}}{nd}\sumj S(cX_{t_{j-1}},\aet)^{-1}[(\D_{j}X)^{\otimes 2}],
\label{hm:h.tilde+1}
\end{equation}
hence, under the uniform non-degeneracy of $S$, choosing $c>1$ (resp. $c< 1$) will shrink (resp. spread) value of $\tilde{h}$.

\medskip

Let
\begin{align}
K[u_{1}]&=\frac{1}{d}\int_{\mbbr^{d}}\tr\Big(S^{-1}(x,\alpha_{0})\big(\p_{\alpha}S(x,\alpha_{0})\big)\Big)[u_{1}]\pi(dx), \nn\\
\tilde{\Gam}_{1,0}[u_{1}^{\otimes2}]
&=\frac{1}{2}\int_{\mbbr^{d}}\tr\Big(S^{-1}(x,\alpha_{0})\big(\p_{\alpha}S(x,\alpha_{0})\big)S^{-1}(x,\alpha_{0})\big(\p_{\alpha}S(x,\alpha_{0})\big)\Big)[u_{1}^{\otimes2}]\pi(dx) \nn\\
&\qquad -\frac{1}{2d}\bigg\{\int_{\mbbr^{d}}\tr\Big(S^{-1}(x,\alpha_{0})\big(\p_{\alpha}S(x,\alpha_{0})\big)\Big)[u_{1}]\pi(dx)\bigg\}^{2},
\nn
\\
\tilde{\Gam}_{2,0}[u_{2}^{\otimes2}]
&=\int_{\mbbr^{d}}S^{-1}(x,\alpha_{0})\big[\p_{\beta}b(x,\tz)[u_{2}],\p_{\beta}b(x,\tz)[u_{2}]\big]\pi(dx)
\nn
\end{align}
for $u_{1}\in\mbbr^{p_{\alpha}}$ and $u_{2}\in\mbbr^{p_{\beta}}$.
Now we are in position to state the joint asymptotic normality of the mGQMLE $\tet$ and $\tilde{h}$.

\begin{thm}
Under Assumptions \ref{Ass1}, \ref{Ass2p}, and \ref{Ass3}, the matrices $\tilde{\Gam}_{1,0}$ and $\tilde{\Gam}_{2,0}$ are positive definite and
\begin{equation}
\bigg(\sqrt{n}\bigg(\frac{\tilde{h}}{\tau h_{0}}-1\bigg),\, \sqrt{n}(\tilde{\alpha}_{n}-\alpha_{0}),\, \sqrt{n\tilde{h}}(\tilde{\beta}_{n}-\beta_{0})\bigg)
\cil N_{1+p}\big( 0,\, \Sigma(\tz) \big),
\label{hm:diffusion.an1+}
\end{equation}
where
\begin{align}
\Sigma(\tz) &:= \left(
\begin{array}{ccc}
2/d + K^{\top}\tilde{\Gam}_{1,0}^{-1}K & & \text{{\rm sym.}} \\
-\tilde{\Gam}_{1,0}^{-1}K & \tilde{\Gam}_{1,0}^{-1} & \\
0 & 0 & \tilde{\Gam}_{2,0}^{-1}
\end{array}
\right).
\nonumber
\end{align}
Further we have
\begin{align}
\tilde{K}_{n} &:= \frac{1}{d}\tr\bigg( \frac{1}{n}\sumj S_{j-1}^{-1}(\aet) \left(\p_{\alpha}S_{j-1}(\aet)\right) \bigg) \cip K,
\nn\\
\tilde{\Gam}_{1,n} &:= 
\tr\bigg( \frac{1}{2n} \sumj S_{j-1}^{-1}(\aet) \big(\p_{\alpha}S_{j-1}(\aet) \big)S^{-1}_{j-1}(\aet) \big(\p_{\alpha}S_{j-1}(\aet) \big) \bigg) \nn\\
&\qquad -\frac{1}{2d}\bigg\{ \tr\bigg( \frac{1}{n}\sumj S_{j-1}^{-1}(\aet) \big(\p_{\alpha}S_{j-1}(\aet)\big)\bigg)\bigg\}^{\otimes 2} \cip \tilde{\Gam}_{1,0},
\nn\\
\tilde{\Gam}_{2,n} &:= 
\frac{1}{n}\sumj S^{-1}_{j-1}(\aet)\big[\p_{\beta}b_{j-1}(\aet,\bet),\, \p_{\beta}b_{j-1}(\aet,\bet)\big] \cip \tilde{\Gam}_{2,0},
\nonumber
\end{align}
from which we can obtain a consistent estimator of $\Sig(\tz)$.
\label{hm:thm2.diffusion.an}
\end{thm}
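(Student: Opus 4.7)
The plan is to treat $(\tilde h, \tet)$ as a joint M-estimator via the reparametrization $\eta := h/(\tau h_0)$, so that one is really estimating the fixed triple $(\eta_0,\tz)=(1,\tz)$, and the three rates $\sqrt{n}$, $\sqrt{n}$, $\sqrt{n\tau h_0}$ become the classical ones for a smooth M-estimation problem.

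I would first establish consistency of $\aet$, $\bet$ and $\tilde h/(\tau h_0)$. For the latter, the standard It\^o expansion $(\D_j X)^{\otimes 2} = \tau h_0 S_{j-1}(\al_0) + R_j$, with a martingale-plus-remainder $R_j$ of order $o_p(\tau h_0)$, combined with the ergodic LLN \eqref{hm:disc.LLN} gives $h(\al_0)/(\tau h_0) \cip \frac{1}{d}\int \tr(S^{-1}(x,\al_0)S(x,\al_0))\pi(dx)=1$. For $\aet$, I would show that $n^{-1}\{\tilde{\mbbh}_n(\theta)-\tilde{\mbbh}_n(\al_0,\beta)\}$ converges uniformly in $\al$ to a non-random function uniquely maximized at $\al_0$; after the profile substitution $h\mapsto h(\al)$, the leading contribution reduces to a Kullback--Leibler-type functional of $\tr\{S^{-1}(x,\al)S(x,\al_0)\}$ whose unique maximizer is encoded by the non-constancy in Assumption \ref{Ass3}(i). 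Consistency of $\bet$ then follows at the next scale using Assumption \ref{Ass3}(ii), after which $\tilde h/(\tau h_0)\cip 1$ by continuity of $h(\cdot)$ in $\al$.

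Joint asymptotic normality proceeds by a first-order expansion of the three estimating equations
\begin{align*}
\p_\al \tilde{\mbbh}_n(\tet) = 0, \qquad \p_\beta \tilde{\mbbh}_n(\tet) = 0, \qquad \tilde h = h(\aet),
\end{align*}
around the truth. Writing $\D_j w = \sqrt{h_0}\,\xi_j$ with $\xi_j$ conditionally $N(0,I_d)$ and using $(\D_j X)^{\otimes 2}/(\tau h_0) = a_{j-1}(\al_0)\xi_j\xi_j^\top a_{j-1}(\al_0)^\top + O_p(\sqrt{h_0})$, the scaled score vector
\begin{align*}
\bigg( \sqrt{n}\Big(\frac{h(\al_0)}{\tau h_0}-1\Big),\ \frac{1}{\sqrt{n}}\p_\al \mbbh_n(\tz;\tau h_0),\ \frac{1}{\sqrt{n\tau h_0}}\p_\beta \mbbh_n(\tz;\tau h_0) \bigg)
\end{align*}
is, up to $o_p(1)$, a sum of martingale differences whose $\beta$-component is linear in $\xi_j$ while its $(\eta,\al)$-components are linear in $\xi_j\xi_j^\top-I_d$; conditional Gaussianity yields their mutual orthogonality, hence the zero $\beta$-cross block of $\Sig(\tz)$. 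A discrete-time martingale CLT (along the lines of \cite{Kes97,UchYos12}) produces the joint Gaussian limit, with covariances assembled from the ergodic quantities defining $\tilde{\Gam}_{1,0}$, $\tilde{\Gam}_{2,0}$ and $K$. Combining convergence of the scaled Hessian via \eqref{hm:disc.LLN} with the identity
\begin{align*}
\sqrt{n}\Big(\frac{\tilde h}{\tau h_0}-1\Big) = \sqrt{n}\Big(\frac{h(\al_0)}{\tau h_0}-1\Big) - K^\top \sqrt{n}(\aet-\al_0) + o_p(1),
\end{align*}
which is obtained by expanding $h(\aet)$ at $\al_0$ and using $\p_\al h(\al_0)/(\tau h_0)\cip -K$, yields \eqref{hm:diffusion.an1+} with the $2/d + K^\top \tilde{\Gam}_{1,0}^{-1}K$ top-left entry and $-\tilde{\Gam}_{1,0}^{-1}K$ off-diagonal.

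The main technical obstacle I anticipate is uniform control of the $O_p(h_0)$ and $O_p(h_0^2)$ remainder terms arising both from the small-time Gaussian approximation \eqref{hm:gauss.approx} and from the profile substitution $h^\star(\theta)\mapsto h(\al)$: these must vanish after multiplication by the scaling rates $\sqrt{n}$ and $\sqrt{n\tau h_0}$ under only $nh_0^2\to 0$, and in particular one must verify that the profile substitution does not introduce an unwanted $\beta$--$\eta$ coupling at leading order. Once the CLT is in hand, $\tilde K_n\cip K$, $\tilde{\Gam}_{1,n}\cip\tilde{\Gam}_{1,0}$ and $\tilde{\Gam}_{2,n}\cip\tilde{\Gam}_{2,0}$ follow from $\tet\cip\tz$, the uniform LLN and the polynomial-growth bounds of Assumption \ref{Ass1}, giving the stated consistent plug-in estimator of $\Sig(\tz)$.
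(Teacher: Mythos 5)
Your overall architecture (consistency via uniform LLN and identifiability, a martingale CLT for the scaled scores, and the delta-method identity $\sqrt{n}(\tilde h/(\tau h_0)-1)=\sqrt{n}(h(\al_0)/(\tau h_0)-1)-K[\sqrt{n}(\aet-\al_0)]+o_p(1)$) is exactly the paper's, and that identity is the correct mechanism behind the $2/d+K^{\top}\tilde{\Gam}_{1,0}^{-1}K$ and $-\tilde{\Gam}_{1,0}^{-1}K$ entries. However, there is a genuine gap in the $\al$-block. The $\al$-component of your displayed score vector is $n^{-1/2}\p_{\al}\mbbh_{n}(\tz;\tau h_0)$, the score of the \emph{unmodified} GQLF at the true stepsize, whereas the estimating equation actually solved is $\p_{\al}\tilde{\mbbh}_{n}(\tet)=0$. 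The two scores differ by a term of exact order $\sqrt{n}$: evaluating $-\tfrac{1}{2h}\sumj\p_{\al}S_{j-1}^{-1}[(\D_{j}X)^{\otimes2}]$ at $h=h(\al_0)$ rather than at $h=\tau h_0$ produces the additional martingale array $\zeta_{2,j}$ proportional to $\big(d^{-1}\|\D_{j}w/\sqrt{h_0}\|^{2}-1\big)$ (cf. \eqref{hm:an.1-3-1}--\eqref{hm:an.1-3}). Omitting it yields the known-$h$ information $\Gam_{1,0}^{\star}$ instead of $\tilde{\Gam}_{1,0}=\Gam_{1,0}^{\star}-\tfrac{1}{2d}\{\int\tr(S^{-1}\p_{\al}S)\,d\pi\}^{\otimes2}$, so the $\al$-block of the limit comes out wrong. (If you instead intend the joint-$(\eta,\theta)$ information-matrix route, you must compute the cross-information $\mathcal{I}_{\al,h}$ and take the Schur complement as in Remark \ref{hm:rem_info}; the correction cannot be skipped either way.) Relatedly, both the $\eta$-score $\ep_{j}$ and the $\al$-score are quadratic in the Gaussian increments, so your conditional-Gaussianity orthogonality covers only the $\beta$-blocks; the $\eta$--$\al$ cross-covariance needs the fourth-moment (Wishart) computations $\E_{j-1}(\ep_{j}\zeta_{1,j})=-\tfrac{2}{d}\tr\{S_{j-1}^{-1}\p_{\al}S_{j-1}\}$ and $\E_{j-1}(\ep_{j}\zeta_{2,j})=2K$, whose cancellation in the average is precisely what makes the raw-score limit block diagonal and validates your final assembly of $\Sigma(\tz)$.

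A second omission: the theorem also asserts positive definiteness of $\tilde{\Gam}_{1,0}$, which you do not address. Since $\tilde{\Gam}_{1,0}$ is $\Gam_{1,0}^{\star}$ minus a rank-one-type correction, positivity is not automatic; the paper deduces it from the Cauchy--Schwarz-type inequality $\{\int\tr C\,d\pi\}^{2}\le d\int\tr(C^{2})\,d\pi$ together with the characterization of equality and the non-constancy of $x\mapsto\tr\{S^{-1}(x,\al_0)\p_{\al}S(x,\al_0)\}$ from Assumption \ref{Ass3}(i). This step must be supplied. The remaining ingredients of your sketch (consistency via the profiled quasi-entropy, remainder control under $nh_0^{2}\to0$, and the plug-in consistency of $\tilde{K}_{n}$, $\tilde{\Gam}_{1,n}$, $\tilde{\Gam}_{2,n}$) match the paper's argument.
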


\medskip

In particular, Theorem \ref{hm:thm2.diffusion.an} implies that $\tilde{h}/(\tau h_{0})=1+O_{p}(n^{-1/2})$, hence
\begin{equation}
\frac{\sqrt{n}}{\tilde{h}}(\tilde{h}-\tau h_{0}) \cil N\left( 0,\, \frac{2}{d} + K^{\top}\tilde{\Gam}_{1,0}^{-1}K\right)
\nonumber
\end{equation}
as well.
Therefore, for any $\gam\in(0,1)$, the $100\times(1-\gam)$-percent confidence interval of $h$ is given by
\begin{align*}
\tilde{h} \pm z_{\gam/2} \frac{\tilde{h}}{\sqrt{n}}\sqrt{\frac{2}{d}+\tilde{K}_{n}^{\top}\tilde{\Gam}^{-1}_{1,n}\tilde{K}_{n}},
\end{align*}
where $z_{\gam/2}$ denotes the upper-$\gam/2$ percentile of $N(0,1)$.

\medskip

Several further remarks on Theorem \ref{hm:thm2.diffusion.an} are in order.

\begin{rem}{\rm 
We here do not assume any specific form on $h_{0}$ as a function of $n$.
A more direct estimation is possible upon assuming that, for example, the true sampling stepsize $h_{0}$ takes the form
\begin{equation}
\tau h_{0}=n^{-\kappa_{0}}
\nonumber
\end{equation}
for some unknown constant $\kappa_{0}\in(1/2,1)$. Let 
\begin{align*}
\tilde{\kappa}:=-\frac{\log\tilde{h}}{\log n}
=-\frac{1}{\log n}\log\left(\frac{1}{nd}\sumj S_{j-1}^{-1}(\tilde{\alpha})\big[(\D_{n}X)^{\otimes2}\big]\right),
\end{align*}
so that the equation $\tilde{h}=n^{-\tilde{\kappa}}$ holds.
Then, under the assumptions of Theorem \ref{hm:thm2.diffusion.an}, the delta method yields that
\begin{align*}
\sqrt{n}(\log n)\big(\tilde{\kappa}-\kappa_{0}\big)=-\sqrt{n}\left(\log\frac{\tilde{h}}{\tau h_{0}}-\log 1\right)
\cil N\bigg(0,\, \frac{2}{d} + K^{\top}\tilde{\Gam}_{1,0}^{-1}K\bigg),
\end{align*}
based on which it is straightforward to construct an approximate confidence interval of the index $\kappa_{0}$.

When $h_{0}$ satisfying \eqref{hm:sampling.design} is (subjectively) given {\it a priori} and $\tau$ is unknown, then, in order to verify the local asymptotic normality for $(\tau,\theta)$ we have to deal with the case where diffusion and drift coefficients contain a common parameter $\tau$. Unfortunately, the existing literature (see \cite{Gob02}) does not exactly cover this case. 
Nevertheless it is expected that the local asymptotic normality does hold in this case as well since the rate of convergence of α is strictly faster than that of $\beta$
and the orthogonality between $\al$ and $\beta$ (the block diagonality of the asymptotic covariance matrix) as specified in \eqref{hm:diffusion.an1+}, so that effect of unknown $\al$ in estimating $\beta$ contained in the drift would be negligible.
Moreover, in this case, estimation (a data driven choice) of the time-scale parameter $\tau$ is straightforward from Theorem \ref{hm:thm2.diffusion.an}: writing $\tilde{\tau}_{n}=\tilde{h}_{n}/h_{0}$ and denoting by $\tau_{0}$ the true value of $\tau$ for clarity, we can obtain
\begin{equation}
\sqrt{n}\left( \tilde{\tau}_{n} - \tau_{0} \right) \cil N_{1}\left( 0,\, \tau_{0}^{2}\left(2/d + K^{\top}\tilde{\Gam}_{1,0}^{-1}K\right)\right)
\nonumber
\end{equation}
directly from \eqref{hm:diffusion.an1+}; the same can be said for the stepwise estimator of $\tau$, see \eqref{hm:diffusion.an1_2step} below.
\label{hm:rem_add1}
}\qed\end{rem}

\begin{rem}{\rm 
When $h_{0}$ and $\tau$ are known, the GQMLE defined to be a maximizer $\tes^{\star}=(\aes^{\star},\bes^{\star})$ of $\mbbh_{n}(\cdot;h_{0})$ given by \eqref{ErLf} satisfies that
\begin{equation}
\big(\sqrt{n}(\aes^{\star}-\alpha_{0}),\, \sqrt{n\tau h_{0}}(\bes^{\star}-\beta_{0})\big)\cil N_{p}\left(0,\diag\{\Gam_{1,0}^{\star\ -1},\, \tilde{\Gam}^{-1}_{2,0}\big\}\right),
\label{hm:usual.gqmle.an}
\end{equation}
where for $u_{1}\in\mbbr^{p_{\alpha}}$
\begin{align}
\Gam_{1,0}^{\star}[u_{1}^{\otimes2}]
&=\frac{1}{2}\int_{\mbbr^{d}}\tr\Big(S^{-1}(x,\alpha_{0})\big(\p_{\alpha}S(x,\alpha_{0})\big)S^{-1}(x,\alpha_{0})\big(\p_{\alpha}S(x,\alpha_{0})\big)\Big)[u_{1}^{\otimes2}]\pi(dx).
\nn
\end{align}
See \cite{Kes97} for details.
Further, \cite{Gob02} proved that the estimator $\tes^{\star}$ is be asymptotically efficient under suitable regularity conditions.
Here are two remarks on comparing \eqref{hm:diffusion.an1+} and \eqref{hm:usual.gqmle.an}.
\begin{enumerate}
\item The second term of the right-hand side in the relation
\begin{equation}
\tilde{\Gam}_{1,0} = \Gam_{1,0}^{\star} - \frac{1}{2d}\bigg\{\int_{\mbbr^{d}}\tr\Big(S^{-1}(x,\alpha_{0})\big(\p_{\alpha}S(x,\alpha_{0})\big)\Big)\pi(dx)\bigg\}^{\otimes 2}
\nonumber
\end{equation}
quantitatively shows ``price'' for estimating $\al$ without knowing $h$. See also Remark \ref{hm:rem_info}.

\item The rate of convergence and the asymptotic covariance matrix of $\bet$ are the same as the best one in the case where $h$ is known, entailing that $\bet$ is asymptotically efficient. This is natural because of the asymptotic orthogonality between $\aes^{\star}$ and $\bes^{\star}$.
\end{enumerate}
}\qed\end{rem}

\begin{rem}{\rm 
Let $D_{n}=D_{n}(h_{0}):=\diag\big(\sqrt{n}I_{p_{\al}},\, \sqrt{n\tau h_{0}}I_{p_{\beta}}\big)$.
Then the observed information and formation matrices for estimating $(\theta,h)$ are given by
\begin{align*}
\mathcal{I}_{n}&=\left(\begin{array}{cc}
\mathcal{I}_{\theta,\theta} & \mathcal{I}_{\theta,h}  \\
\text{{\rm sym.}} & \mathcal{I}_{h,h} \\
\end{array}\right)
:=\left(\begin{array}{cc}
-D_{n}^{-1}\p_{\theta}^{2}\mbbh_{n}(\tz;\tau h_{0})D_{n}^{-1} & -\frac{\tau h_{0}}{\sqrt{n}}D_{n}^{-1}\p_{\theta}\p_{h}\mbbh_{n}(\tz;\tau h_{0}) \\
\text{{\rm sym.}} & -\frac{(\tau h_{0})^{2}}{n}\p_{h}^{2}\mbbh_{n}(\tz;\tau h_{0}) \\
\end{array}\right), \\
\mathcal{I}_{n}^{-1}&=\left(\begin{array}{cc}
\mathcal{I}^{\theta,\theta} & \mathcal{I}^{\theta,h} \\
\text{{\rm sym.}} & \mathcal{I}^{h,h} \\
\end{array}\right),
\end{align*}
respectively, where $\mathcal{I}^{\theta,\theta}$ is $p\times p$ matrix, $\mathcal{I}^{\theta,h}$ is $p$-dimensional vector, and $\mathcal{I}^{h,h}$ is $\mbbr$-valued.
Matrix manipulations give the expressions for the formation matrix: 
\begin{align}
\mathcal{I}^{\theta,\theta}&=\big(\mathcal{I}_{\theta,\theta}-\mathcal{I}_{\theta,h}\mathcal{I}_{h,h}^{-1}\mathcal{I}_{\theta,h}\big)^{-1}, \nn\\
\mathcal{I}^{\theta,h}&=-\mathcal{I}_{h,h}^{-1}\mathcal{I}_{\theta,h}\mathcal{I}^{\theta,\theta}, \nn\\
\mathcal{I}^{h,h}&=\mathcal{I}_{h,h}^{-1}+\big(\mathcal{I}^{\theta,\theta}\big)^{-1}\big[\big(\mathcal{I}^{\theta,h}\big)^{\otimes2}\big].
\nn
\end{align}
Then, we can show that
\begin{align*}
\mathcal{I}^{\theta,\theta}&\cip\diag(\tilde{\Gam}_{1,0}^{-1},\tilde{\Gam}^{-1}_{2,0}), \\
(\mathcal{I}^{\theta,h})_{j}&\cip\left\{\begin{array}{cc}
(-\tilde{\Gam}_{1,0}^{-1}K)_{j}, & (1\leq j\leq p_{\alpha}) \\
0,  & (p_{\alpha}< j\leq p)
\end{array}\right. \\
\mathcal{I}^{h,h}&\cip\frac{2}{d}+K^{\top}\tilde{\Gam}_{1,0}^{-1}K,
\end{align*}
Hence, $\mathcal{I}_{n}^{-1}$ converges to $\Sigma_{\tau}(\tz)$ in probability, where
\begin{align}
\Sigma_{\tau}(\tz) &:= \left(
\begin{array}{ccc}
2/d + K^{\top}\tilde{\Gam}_{1,0}^{-1}K & & \text{{\rm sym.}} \\
-\tilde{\Gam}_{1,0}^{-1}K & \tilde{\Gam}_{1,0}^{-1} & \\
0 & 0 & \tilde{\Gam}^{-1}_{2,0}
\end{array}
\right).
\nonumber
\end{align}
Building on these observations, it is expected that $\tilde{\theta}_{n}$ would be asymptotically efficient when $h_{0}$ is unknown, although we do not have a conventional Haj\'{e}k-Le Cam lower bound. We refer to \cite[Chapter 4]{PacSal97} and \cite[Section 7.3]{Hey97} for a systematic account for (quasi-)likelihood inference in the presence of nuisance parameters.
\label{hm:rem_info}
}\qed\end{rem}

\begin{rem}{\rm 
Let us explain why we have included the nuisance parameter $\tau$ in \eqref{hm:sde1} from the very beginning.
Indeed, profiling out the sampling stepsize $h_0$ from the Gaussian quasi-likelihood \eqref{ErLf} as before makes the constant $\tau$ in the diffusion coefficient $\tau S(x,\al)$ hidden.
In the proof of the consistency of $\tilde{\al}_n$ (Section \ref{hm:proof.consistency}), it can be seen that the second term in the rightmost side of \eqref{hm:mGQMLE.def} is concerned (in other words, the function $\tilde{\mbbh}_{1,n}(\al)$ defined in \eqref{hm:def_H1n_al}); the other terms are asymptotically negligible uniformly in $\theta$.
Obviously, the second term is invariant under replacing $S(x,\al)$ by $\sqrt{\tau}S(x,\al)$ for \textit{arbitrary} $\tau>0$, so that we can only identify the diffusion coefficient up to a multiplicative constant. Thus, at this stage it is natural to incorporate the nuisance multiplicative parameter $\tau$ in the diffusion coefficient:
\begin{align}
dX_{t}=\sqrt{\tau}a(X_{t},\alpha)dw_{t}+b(X_{t},\theta)dt,
\label{hm:rem.tau-3}
\end{align}
where the parameter $\al$ is identifiable under Assumption \ref{Ass3}.
However, it follows that the nuisance parameter $\tau \ne 1$ can make the drift parameter $\beta$ non-identifiable unless we replace the drift coefficient $b(x,\theta)$ in \eqref{hm:rem.tau-3} by $\tau b(x,\theta)$ with the same $\tau$ as above: for example, consider the model of the form $dX_{t}=\sqrt{\tau}a(X_{t},\alpha)dw_{t}+\tau'b(X_{t},\theta)dt$ for some constant $\tau'>0$. Then, exactly as in \eqref{hm:al-consis+2} in we can deduce that
\begin{align}
\tilde{\mbby}_{n}^{2}(\beta;\aet)
&\cip 
\tau' \int S^{-1}(x,\al_{0})\left[ b(x,\tz), b(x,\al_{0},\beta)-b(x,\tz)\right] \pi(dx) \nn\\
&{}\qquad -\frac{\tau}{2}\int S^{-1}(x,\al_{0})\left[ b(x,\al_{0},\beta)^{\otimes 2}-b(x,\tz)^{\otimes 2} \right] \pi(dx)
\label{hm:rem.tau-2}
\end{align}
uniformly in $\beta$ for the quasi Kullback-Leibler divergence $\tilde{\mbby}_{n}^{2}(\beta;\aet)$; see Section \ref{hm:proof.consistency} for the definition.
It follows that any maxima of the limit \eqref{hm:rem.tau-2} may differ from the true value $\beta_{0}$ unless $\tau^{\prime}=\tau$, thus validating the form \eqref{hm:sde1}.
Analogous remarks hold for the stepwise-estimation version described in Section \ref{hm:sec_ergo.diff.2step}.
}\qed \label{se:rem.taudiff}
\end{rem}

\subsection{Stepwise estimation}\label{hm:sec_ergo.diff.2step}

The modified GQLF is the mixed-rates type, that is, it consists of the sum of two terms converging to non-trivial limits at different rates.
In cases where $h$ is specified beforehand, it is well-known that stepwise estimation is possible;
see \cite{KamUch15} and \cite{UchYos12}, which can handle rather general sampling scheme than \eqref{hm:sampling.design}, as well as the references therein.
We will show that under \eqref{hm:sampling.design} it is still possible to formulate a two-step estimation procedure.

Let
\begin{align}
\tilde{\mbbh}_{1,n}(\al) &:= 
-\frac{1}{2}\bigg\{\sumj\log\big|S_{j-1}(\alpha)\big|+nd\log\bigg(\frac{1}{nd}\sumj S_{j-1}^{-1}(\alpha)\big[(\D_{j}X)^{\otimes2}\big]\bigg)\bigg\},
\label{hm:def_H1n_al} \\
\tilde{\mbbh}_{2,n}(\al,\beta) &:= \sumj S_{j-1}^{-1}(\alpha)\big[\D_{j}X,b_{j-1}(\theta)\big] \nn\\
&\quad-\frac{1}{2}\bigg(\frac{1}{nd}\sumj S_{j-1}^{-1}(\alpha)\big[(\D_{j}X)^{\otimes2}\big]\bigg)\bigg(\sumj S_{j-1}^{-1}(\alpha)\big[b_{j-1}(\theta)^{\otimes2}\big]\bigg),
\label{se:def_H2n_be}
\end{align}
so that $\tilde{\mbbh}_{n}(\theta) = -(nd/2)\{1+\log(2\pi)\} + \tilde{\mbbh}_{1,n}(\al) + \tilde{\mbbh}_{2,n}(\al,\beta)$; recall \eqref{hm:mGQMLE.def}.
Then, we estimate $\al$ and $\beta$ by $\tet'=(\aet',\bet')$ defined through the following step-by-step manner:
\begin{align}
\aet' &\in \argmax_{\alpha}\tilde{\mbbh}_{1,n}(\al), \label{se:two.al} \\
\bet' &\in \argmax_{\beta}\tilde{\mbbh}_{2,n}(\aet',\beta). \label{se:two.be}
\end{align}
We remark that the contrast function $\beta\mapsto \tilde{\mbbh}_{2,n}(\aet',\beta)$ may be regarded as a time-discretized version of the log-likelihood function of $\beta$ based on a continuous-time observation (see \cite{Kut04}), and also that $\bet'$ is explicit if $\beta\mapsto b(x,\aet',\beta)$ is linear.

The following theorem shows that $\tet$ and $\tet'$ have the same asymptotic distribution.

\begin{thm}
Under Assumptions \ref{Ass1}, \ref{Ass2p}, and \ref{Ass3}, we have
\begin{equation}
\bigg(\sqrt{n}\bigg(\frac{\tilde{h}^{\prime}}{\tau h_{0}}-1\bigg),\, \sqrt{n}(\tilde{\alpha}_{n}^{\prime}-\alpha_{0}),\, \sqrt{n\tilde{h}^{\prime}}(\tilde{\beta}_{n}^{\prime}-\beta_{0})\bigg)\cil N_{1+p}\big(0,\Sigma(\tz)\big),
\label{hm:diffusion.an1_2step}
\end{equation}
where $\tilde{h}^{\prime}=h(\tilde{\alpha}_{n}^{\prime})$.
\label{hm:thm_stepwise.ergo.diff}
\end{thm}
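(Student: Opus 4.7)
The plan is to deduce Theorem \ref{hm:thm_stepwise.ergo.diff} from Theorem \ref{hm:thm2.diffusion.an} by proving that the stepwise estimator $\tet'$ is asymptotically equivalent to the joint mGQMLE $\tet$ at the rates that appear in \eqref{hm:diffusion.an1+}, namely $\sqrt n(\aet'-\aet)=o_p(1)$, $\sqrt{nh}(\bet'-\bet)=o_p(1)$, and $\sqrt n(\tilde h'-\tilde h)/(\tau h_0)=o_p(1)$. The guiding heuristic is that $\tilde{\mbbh}_{1,n}$ is of stochastic order $n$ whereas $\tilde{\mbbh}_{2,n}$ is only of order $nh\ll n$, so when maximising $\tilde{\mbbh}_n$ jointly the $\al$-coordinate is asymptotically governed by $\tilde{\mbbh}_{1,n}$ alone, which is precisely the stepwise construction; the $\beta$-coordinate is in either case governed by $\tilde{\mbbh}_{2,n}$, and the only question is how much the use of $\aet'$ instead of $\aet$ perturbs it.

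First I would establish consistency. Uniform convergence of $n^{-1}\tilde{\mbbh}_{1,n}(\al)$ to the same quasi-relative-entropy limit $\tilde{\mbby}_0^{1}(\al)$ that drives the joint case (the $\tilde{\mbbh}_{2,n}$-contribution being negligible after division by $n$), combined with Assumption \ref{Ass3}(i), yields $\aet'\cip \al_0$ by the standard argmax argument; consistency of $\bet'$ then follows from uniform convergence of $(nh)^{-1}\tilde{\mbbh}_{2,n}(\aet',\cdot)$ together with Assumption \ref{Ass3}(ii). To quantify the $\al$-comparison, use the $\al$-block of the joint score $0=\p_\al\tilde{\mbbh}_{1,n}(\aet)+\p_\al\tilde{\mbbh}_{2,n}(\aet,\bet)$ together with the stepwise identity $\p_\al\tilde{\mbbh}_{1,n}(\aet')=0$, and Taylor-expand $\p_\al\tilde{\mbbh}_{1,n}$ around $\aet$ to obtain
\[
\p_\al^2\tilde{\mbbh}_{1,n}(\check\al)(\aet'-\aet)=\p_\al\tilde{\mbbh}_{2,n}(\aet,\bet).
\]
A direct martingale-plus-drift bound, combined with consistency, gives $\p_\al\tilde{\mbbh}_{2,n}(\aet,\bet)=O_p(\sqrt{nh})$, while from the proof of Theorem \ref{hm:thm2.diffusion.an} we have $n^{-1}\p_\al^2\tilde{\mbbh}_{1,n}\cip-\tilde{\Gam}_{1,0}$ (positive definite). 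Hence $\aet'-\aet=O_p(\sqrt{h/n})$ and $\sqrt n(\aet'-\aet)=O_p(\sqrt h)=o_p(1)$.

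For the $\beta$-comparison, the joint score (using $\p_\beta\tilde{\mbbh}_n=\p_\beta\tilde{\mbbh}_{2,n}$) gives $\p_\beta\tilde{\mbbh}_{2,n}(\aet,\bet)=0$; subtracting this from $\p_\beta\tilde{\mbbh}_{2,n}(\aet',\bet')=0$ and Taylor-expanding around $(\aet,\bet)$ produces
\[
0=\p_\beta^2\tilde{\mbbh}_{2,n}(\check)(\bet'-\bet)+\p_\al\p_\beta\tilde{\mbbh}_{2,n}(\check)(\aet'-\aet).
\]
Here $(nh)^{-1}\p_\beta^2\tilde{\mbbh}_{2,n}\cip-\tilde{\Gam}_{2,0}$ is invertible, while $\p_\al\p_\beta\tilde{\mbbh}_{2,n}=O_p(nh)$ combined with the previous step yields a cross-term of order $\sqrt{nh^3}$, so after dividing by $\sqrt{nh}$ one obtains $\sqrt{nh}(\bet'-\bet)=O_p(h)=o_p(1)$ thanks to $nh^2\to0$. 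Finally, $\tilde h'-\tilde h=\p_\al h(\check\al)[\aet'-\aet]$ with $\p_\al h(\al)=O_p(\tau h_0)$ uniformly (immediate from the definition of $h(\al)$ and the scaling $\D_j X=O_p(\sqrt{h_0})$), so $\sqrt n(\tilde h'-\tilde h)/(\tau h_0)=O_p(\sqrt h)=o_p(1)$. Slutsky's theorem applied to \eqref{hm:diffusion.an1+} then yields \eqref{hm:diffusion.an1_2step}.

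The principal obstacle is the sharp bookkeeping of the cross-derivative term $\p_\al\p_\beta\tilde{\mbbh}_{2,n}(\check)[\aet'-\aet]$: this is the only place where substituting $\aet'$ for the true $\al_0$ in the $\beta$-contrast could in principle pollute the slower $\sqrt{nh}$-rate of $\bet'$, and the argument goes through only by simultaneously exploiting the strict gap between the $\sqrt n$-rate of $\al$ and the $\sqrt{nh}$-rate of $\beta$ and the sampling design assumption $nh^2\to 0$. Once this vanishing cross-effect is secured, the block diagonality between the $\al$- and $\beta$-blocks of $\Sig(\tz)$ in \eqref{hm:diffusion.an1+} automatically guarantees that the stepwise estimator $\tet'$ shares the same asymptotic law as the joint one.
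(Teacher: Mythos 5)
Your overall strategy --- showing $\sqrt{n}(\aet'-\aet)=o_{p}(1)$, $\sqrt{n\tau h_{0}}(\bet'-\bet)=o_{p}(1)$ and $\sqrt{n}(\tilde{h}'-\tilde{h})/(\tau h_{0})=o_{p}(1)$ and then transferring \eqref{hm:diffusion.an1+} by Slutsky --- is sound and genuinely different from the paper's. The paper never compares $\tet'$ with $\tet$; it re-derives the stochastic expansions of the stepwise estimators directly in terms of the same martingale arrays (see \eqref{hm:2step-1}, \eqref{se:2step-3}, \eqref{hm:2step-2}), reusing \eqref{hm:an.1-3-1}, \eqref{hm:an.1-3} and \eqref{hm:an.1-4}, and then invokes the same CLT. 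Your reduction isolates cleanly where the two procedures could differ, at the price of having to control the $\al$-score of $\tilde{\mbbh}_{2,n}$ at the joint maximizer.

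There is, however, one incorrect intermediate claim. You assert that a martingale-plus-drift bound gives $\p_{\al}\tilde{\mbbh}_{2,n}(\aet,\bet)=O_{p}(\sqrt{nh})$. The martingale part is indeed $O_{p}(\sqrt{nh_{0}})$, but the drift contributions do \emph{not} cancel: substituting $\D_{j}X\approx\tau h_{0}b_{j-1}(\tz)+(\text{mart.})$ into $\p_{\al}\tilde{\mbbh}_{2,n}$ leaves a residual of the form $\frac{\tau h_{0}}{2}\sumj\p_{\al}S_{j-1}^{-1}[b_{j-1}^{\otimes2}]-\frac{1}{2}\p_{\al}h(\al_{0})\sumj S_{j-1}^{-1}[b_{j-1}^{\otimes2}]$, which in general (precisely because $x\mapsto\tr\{S^{-1}\p_{\al}S\}(x,\al_{0})$ is non-constant under Assumption \ref{Ass3}(i)) is of exact order $nh_{0}\gg\sqrt{nh_{0}}$; this is consistent with \eqref{se:aux.4} and with the paper's own expansion \eqref{hm:an.1-3-1}, where the $\tilde{\mbbh}_{2,n}$-contribution to $n^{-1/2}\p_{\al}\tilde{\mbbh}_{n}$ appears as $O_{p}(\sqrt{n}h_{0})$, not $O_{p}(\sqrt{h_{0}})$. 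Fortunately the error is not fatal: with the correct order one gets $\aet'-\aet=O_{p}(h_{0})$ rather than $O_{p}(\sqrt{h_{0}/n})$, hence $\sqrt{n}(\aet'-\aet)=O_{p}(\sqrt{n}h_{0})=o_{p}(1)$ by $nh_{0}^{2}\to0$ in \eqref{hm:sampling.design}; propagating $O_{p}(h_{0})$ through your $\beta$- and $\tilde{h}$-steps yields $\sqrt{n\tau h_{0}}(\bet'-\bet)=O_{p}(\sqrt{nh_{0}^{3}})=o_{p}(1)$ and $\sqrt{n}(\tilde{h}'-\tilde{h})/(\tau h_{0})=O_{p}(\sqrt{n}h_{0})=o_{p}(1)$, so all three equivalences survive. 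You should correct the stated order and make explicit that it is the design condition $nh_{0}^{2}\to0$, not the martingale scaling, that rescues the $\sqrt{n}$-rate comparison for $\al$.
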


\medskip


\begin{rem}{\rm
In cases where the coefficients have a common parameter, we may follow the \textit{three-step} estimation as in \cite{MasUeh17}, by making use of a finite-sample bias correction.
In first step and second steps, we obtain $\tet'=(\aet',\bet')$ defined by \eqref{se:two.al} and \eqref{se:two.be} as before.
Then, using $\bet'$ and $\tilde{\mbbh}_{n}$, we update the estimator $\aet'$ by
\begin{align*}
\tilde{\alpha}_{n}'' \in\argmax_{\alpha}\tilde{\mbbh}_{n}(\alpha,\bet').
\end{align*}
While the third-step estimator $\aet''$ has the same asymptotic properties as $\aet'$, it may provide us with a significant bias reduction in finite samples.
In the unreported numerical experiments, we observed cases where $\aet''$ certainly reduce the bias of $\aet'$.
}\qed
\end{rem}

\subsection{Polynomial type large deviation inequality} \label{se:sec_pldi}

In this section, we will give sufficient conditions for the PLDI \eqref{pldi1} and \eqref{pldi2} below,
which ensure $L^{q}(\pr)$-boundedness of M- and Bayesian (parameter-integral) type estimators \cite{Yos11}, hence in particular convergence of their moments.
In case where $h$ is known and the coefficients do not have a common parameter, sufficient conditions for the PLDI can be found in \cite[Section 6]{Yos11}.

To state the result we introduce stronger regularity conditions, essentially borrowed from \cite{Gob02}.
Recall that $\lam_{\min}\{S(x,\al)\}$ and $\lam_{\max}\{S(x,\al)\}$ denote the minimum and maximum eigenvalues of $S(x,\al)$, respectively.

\begin{ass}
\begin{enumerate}
\item Assumption \ref{Ass1} holds and there exists a constant $C_{1}\ge 1$ for which
\begin{align}
& \left\| \p_{x}b(x,\theta)\right\| + \left\| \p_{x}S(x,\al)\right\| \le C_{1}, \nn\\
& C_{1}^{-1} \le \lam_{\min}\{S(x,\al)\} \le \lam_{\max}\{S(x,\al)\} \le C_{1},
\nonumber
\end{align}
for each $(x,\theta)$.
\item There exist positive constants $K_{1}$, $K_{2}$, and $\ep_{1}$, for which either one of the following holds:
\begin{enumerate}
\item $\E\{\exp(\ep_{1}\|X_{0}\|^{2})\}<\infty$ and
\begin{equation}
x^{\top} b(x,\theta) \le - K_{1} \|x\|^{2} + K_{2}
\nonumber
\end{equation}
for each $(x,\theta)$, or
\item $\E\{\exp(\ep_{1}\|X_{0}\|)\}<\infty$, $b(x,\theta)$ is essentially bounded, and
\begin{equation}
x^{\top} b(x,\theta) \le - K_{1} \|x\| + K_{2}
\nonumber
\end{equation}
for each $(x,\theta)$.
\end{enumerate}
\end{enumerate}
\label{Ass1+}
\end{ass}

We remark that Assumption \ref{Ass1+} implies Assumption \ref{Ass2p}: see Section \ref{se:proof.pldi} for details.
Further, it is the $g_{q}$-exponential ergodicity \eqref{hm:Ass:pldi1-1} that is essential in the proof of Theorem \ref{se:thm.pldi} below.
We could replace the uniform boundedness and ellipticity of $S$ and the drift condition in Assumption \ref{Ass1+} by any other ones which imply the $g_{q}$-exponential ergodicity.

\medskip

For $u_{1}\in\mbbr^{p_{\alpha}}$ and $u_{2}\in\mbbr^{p_{\beta}}$, we introduce the random fields $\tilde{\mbbz}_{n}^{1}$ and $\tilde{\mbbz}_{n}^{2}$ defined by
\begin{align*}
\tilde{\mbbz}_{n}^{1}(u_{1};\alpha_{0},\beta)&=\exp\left\{\tilde{\mbbh}_{n}\left(\alpha_{0}+\frac{u_{1}}{\sqrt{n}},\beta\right)-\tilde{\mbbh}_{n}\left(\alpha_{0},\beta\right)\right\}, \\
\tilde{\mbbz}_{n}^{2}(u_{2};\tz)&=\exp\left\{\tilde{\mbbh}_{n}\left(\alpha_{0},\beta_{0}+\frac{u_{2}}{\sqrt{n\tau h_{0}}}\right)-\tilde{\mbbh}_{n}\left(\alpha_{0},\beta_{0}\right)\right\},
\end{align*}
respectively.
In order to verify the high-order uniform integrability of the scaled estimators, tail behaviors of these random fields are crucial.
Let $\mbbu_{n}^{1}=\{u_{1}\in\mbbr^{p_{\alpha}};\alpha_{0}+u_{1}/\sqrt{n}\in\Theta_{\alpha}\}$ and $\mbbu_{n}^{2}=\{u_{2}\in\mbbr^{p_{\beta}};\beta_{0}+u_{2}/\sqrt{n\tau h_{0}}\in\Theta_{\beta}\}$.
Next theorem gives the PLDI for joint estimation case.

\begin{thm}
Assume that for some positive constant $\epsilon_{0}$, $nh_{0}\geq n^{\epsilon_{0}}$ for every $n$ large enough.
Let Assumptions \ref{Ass3} and \ref{Ass1+} hold. Then, for any positive number $L$ there exists a constant $C_{L}$ such that
\begin{align}
&\pr\left(\sup_{(u_{1},\beta)\in\{u_{1}\in\mbbu_{n}^{1};r\leq\|u_{1}\|\}\times\Theta_{\beta}}\tilde{\mbbz}_{n}^{1}(u_{1};\alpha_{0},\beta)\geq e^{-r}\right)\leq\frac{C_{L}}{r^{L}}, \label{pldi1} \\
&\pr\left(\sup_{u_{2}\in\{u_{2}\in\mbbu_{n}^{2};r\leq\|u_{2}\|\}}\tilde{\mbbz}_{n}^{2}(u_{2};\tz)\geq e^{-r}\right)\leq\frac{C_{L}}{r^{L}} \label{pldi2} 
\end{align}
for all $n>0$ and $r>0$.
In particular, we have
\begin{equation}
\lim_{n\to\infty}
\E\bigg\{f\bigg( \sqrt{n}\bigg(\frac{\tilde{h}}{\tau h_{0}}-1\bigg),\, 
\sqrt{n}(\tilde{\alpha}_{n}-\alpha_{0}),\, \sqrt{n\tilde{h}}(\tilde{\beta}_{n}-\beta_{0}) \bigg)\bigg\} 
=\int f(y)\phi_{p+1}\left( y;\, 0,\Sig(\tz) \right)dy
\label{se:thm.pldi-1}
\end{equation}
for any continuous function $f:\mbbr^{p}\to\mbbr$ at most of polynomial growth, that is, $\limsup_{\|u\|\to\infty}(1+\|u\|^{r})^{-1}|f(u)|<\infty$ for some $r>0$.
Here, $\phi_{p}(\cdot;\, \mu,\Sig)$ denotes the probability density function of $N_{p}(\mu,\Sig)$.
\label{se:thm.pldi}
\end{thm}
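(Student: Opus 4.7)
The plan is to follow Yoshida's polynomial-type large deviation scheme (\cite{Yos11}) tailored to the mixed-rates structure of $\tilde{\mbbh}_{n}$. Recalling the decomposition $\tilde{\mbbh}_{n}(\theta) = -(nd/2)\{1 + \log(2\pi)\} + \tilde{\mbbh}_{1,n}(\alpha) + \tilde{\mbbh}_{2,n}(\alpha,\beta)$, I would establish the two PLDIs \eqref{pldi1} and \eqref{pldi2} separately, one for each parameter block, and then combine them with the CLT of Theorem \ref{hm:thm2.diffusion.an} to yield the moment convergence \eqref{se:thm.pldi-1}.

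Preparatorily, I would verify that Assumption \ref{Ass1+} delivers the $g_{q}$-exponential ergodicity through a Foster--Lyapunov argument using $V(x)=\exp(\ep\|x\|^{2})$ in case (a) and $V(x)=\exp(\ep\|x\|)$ in case (b), together with the uniform ellipticity and the dissipative drift condition. In particular $\sup_{t}\E\{\exp(\ep^{\prime}\|X_{t}\|^{\kappa})\}<\infty$ for suitable $\ep^{\prime}>0$ and $\kappa\in\{1,2\}$, and for any $q>0$ we obtain uniform conditional-increment bounds of the form $\E(\|\D_{j}X\|^{q}\mid\mcf_{t_{j-1}})\lesssim h_{0}^{q/2}(1+\|X_{t_{j-1}}\|)^{q}$. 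These are the uniform-in-$\theta$ moment bounds on the $\theta$-derivatives of $\tilde{\mbbh}_{n}$ that Yoshida's scheme requires.

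For \eqref{pldi1}, the key observation is that with $\alpha=\alpha_{0}+u_{1}/\sqrt{n}$,
\begin{equation*}
\tilde{\mbbh}_{n}(\alpha,\beta)-\tilde{\mbbh}_{n}(\alpha_{0},\beta)
= \bigl\{\tilde{\mbbh}_{1,n}(\alpha)-\tilde{\mbbh}_{1,n}(\alpha_{0})\bigr\}+R_{n}(u_{1},\beta),
\end{equation*}
where the remainder $R_{n}$ coming from $\tilde{\mbbh}_{2,n}$ is asymptotically negligible uniformly in $(u_{1},\beta)$ on compacta because the drift part contributes at the slower scale $\sqrt{nh_{0}}\ll\sqrt{n}$ in the $\alpha$-direction. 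A localization argument (splitting $\|u_{1}\|\le\del\sqrt{n}$ versus the outside) together with a Taylor expansion then reduces \eqref{pldi1} to a PLDI for the pure $\alpha$-field built from $\tilde{\mbbh}_{1,n}$; Yoshida's sufficient conditions are verified via (i) the identifiability and $\tilde{\Gam}_{1,0}>0$ supplied by Assumption \ref{Ass3}(i), already established when proving Theorem \ref{hm:thm2.diffusion.an}, and (ii) a BDG-type estimate on the martingale part of the $\alpha$-score combined with the moment bounds above. For \eqref{pldi2}, at $\alpha=\alpha_{0}$ the $\alpha$-dependent terms cancel and the difference equals $\tilde{\mbbh}_{2,n}(\alpha_{0},\beta)-\tilde{\mbbh}_{2,n}(\alpha_{0},\beta_{0})$; the analysis then reduces to the classical PLDI for drift-parameter estimation of an ergodic diffusion, the Itô--Taylor discretization error being absorbed thanks to $nh_{0}^{2}\to 0$ and the assumption $nh_{0}\ge n^{\ep_{0}}$, with identifiability coming from Assumption \ref{Ass3}(ii) and non-degeneracy of $\tilde{\Gam}_{2,0}$ as in Theorem \ref{hm:thm2.diffusion.an}.

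Once both PLDIs are in place, Yoshida's general theorem yields $L^{q}(\pr)$-boundedness of $\sqrt{n}(\aet-\alpha_{0})$ and $\sqrt{n\tau h_{0}}(\bet-\beta_{0})$ for every $q>0$; applying the delta method to $\tilde{h}=h(\aet)$, together with the $L^{q}$-boundedness of the empirical functionals appearing in $h(\cdot)$ via $g_{q}$-exponential ergodicity, transfers this to $\sqrt{n}(\tilde{h}/(\tau h_{0})-1)$. Combining uniform integrability with the weak convergence \eqref{hm:diffusion.an1+} then produces \eqref{se:thm.pldi-1}. The main obstacle I expect is the uniform-in-$\beta$ control required by \eqref{pldi1}: even though the $\beta$-dependent part is of lower order, it must be controlled uniformly over the whole parameter range $\Theta_{\beta}$ simultaneously with the $\alpha$-tail in $r$, which demands careful bookkeeping of the polynomial prefactors in the moment bounds so that the remainder still decays faster than any polynomial in $r$.
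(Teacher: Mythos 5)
Your overall architecture (Yoshida's PLDI scheme applied blockwise to the mixed-rates field, $g_{q}$-exponential ergodicity via a Foster--Lyapunov function $\exp(c\|x\|^{2})$ or $\exp(c\|x\|)$, then moment convergence from the PLDI plus the CLT) matches the paper's. However, there is a genuine gap: you never address the random normalization $h(\al)=\frac{1}{nd}\sumj S_{j-1}^{-1}(\al)[(\D_{j}X)^{\otimes 2}]$ sitting \emph{inside a logarithm} in $\tilde{\mbbh}_{1,n}$. All the moment bounds you invoke (conditional increment bounds, BDG, polynomial growth of coefficients) control \emph{positive} moments, but Yoshida's conditions here require, e.g., $\E\{\sup_{\beta}\|n^{-1/2}\p_{\al}\tilde{\mbbh}_{n}(\al_{0},\beta)\|^{M}\}<\infty$ and the analogous bounds for $\p_{\al}^{2}$ and $\p_{\al}^{3}$, and every $\al$-derivative of $nd\log h(\al)$ produces ratios with $h(\al)$ in the denominator. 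Thus one must prove
\begin{equation*}
\sup_{n}\E\bigg(\sup_{\al\in\overline{\Theta}_{\al}}\Big|\tfrac{1}{nh_{0}}\sumj S_{j-1}^{-1}(\al)\big[(\D_{j}X)^{\otimes2}\big]\Big|^{-q}\bigg)<\infty
\end{equation*}
for all $q>0$. This is exactly the point at which the paper says the known-$h$ argument of Yoshida ``would not go through without additional considerations,'' and it is not a consequence of ergodicity: the law of large numbers gives $h(\al)/h_{0}\cip\tau$ but says nothing about uniform-in-$n$ integrability of the inverse. The paper proves it (Lemma \ref{hm:lem_inv.moment}) via the Gaussian \emph{lower} bound on the transition density (the Aronson-type estimate \eqref{hm:gobet.bound} from Gobet), which yields the small-ball estimate $\pr(\sum_{j=1}^{k}\|h_{0}^{-1/2}\D_{i+j}X\|^{2}\le\ep)\lesssim\ep^{kd/2}$ by successive conditioning, and then a blocking/Jensen argument in the spirit of Bhansali--Papangelou and Findley--Wei. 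Without this ingredient your verification of the $\al$-block conditions \eqref{pldi.al1}--\eqref{pldi.al4} would fail; note the $\beta$-block \eqref{pldi2} is unaffected since $h(\al)$ enters $\tilde{\mbbh}_{2,n}$ only multiplicatively.

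A secondary point: the obstacle you single out at the end (uniform-in-$\beta$ control of the drift contribution inside \eqref{pldi1}) is in fact the easy part --- the drift terms are of order $O_{p}^{\ast}(\sqrt{n}h_{0})$ and $O_{p}^{\ast}(\sqrt{h_{0}})$ and are handled by the standard Sobolev/BDG bounds together with $\sup_{t}\E(\|X_{t}\|^{q})<\infty$. The genuinely new difficulty is the inverse-moment control above; also, in passing from the logarithmic discrepancy $\log(h(\al)/h_{0})-\log(\tau d^{-1}\int\tr(S^{-1}S_{0})d\pi)$ to a tractable quantity one uses $|\log x|\le(1+x^{-1})|x-1|$ followed by H\"older, which again calls on the same negative-moment bound. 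Your outline should be amended to include this lemma before the verification of Yoshida's conditions can be carried out.
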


The proof of Theorem \ref{se:thm.pldi} is given in Section \ref{se:proof.pldi}.

\begin{rem}{\rm 
Let us mention the PLDI for stepwise estimation case. 
We define the random fields $\tilde{\mbbz}_{n}^{1}$ and $\tilde{\mbbz}_{n}^{2}$ by
\begin{align*}
\tilde{\mbbz}_{n}^{1}(u_{1};\alpha_{0})&=\exp\left\{\tilde{\mbbh}_{1,n}\left(\alpha_{0}+\frac{u_{1}}{\sqrt{n}}\right)-\tilde{\mbbh}_{1,n}\left(\alpha_{0}\right)\right\}, \\
\tilde{\mbbz}_{n}^{2}(u_{2};\tz)&=\exp\left\{\tilde{\mbbh}_{2,n}\left(\alpha_{0},\beta_{0}+\frac{u_{2}}{\sqrt{n\tau h_{0}}}\right)-\tilde{\mbbh}_{2,n}\left(\alpha_{0},\beta_{0}\right)\right\},
\end{align*}
respectively.
Then, we can show similar statements as \eqref{pldi1} and \eqref{pldi2} under the assumptions of Theorem \ref{se:thm.pldi}.
Moreover, \eqref{se:thm.pldi-1} holds with $\tilde{\alpha}_{n}$, $\tilde{\beta}_{n}$, and $\tilde{h}$ replaced by $\tilde{\alpha}_{n}^{\prime}$, $\tilde{\beta}_{n}^{\prime}$, and $\tilde{h}^{\prime}$.
}\qed
\end{rem}

\section{Consistent model selection}
\label{hm:sec_qbic}

We here consider consistent model selection by the (quasi-)Bayesian information criterion ((Q)BIC for short) studied in \cite{EguMas18a};
previously, the correct form of the classical Schwarz's BIC for ergodic diffusion observed at high frequency was given in \cite[Theorems 3.7, 4.5, and 4.6]{EguMas18a} when $h$ is given {\it a priori}.

Let $\Pi(d\theta)$ denote the prior distribution over $\Theta$.

\begin{ass}
The distribution $\Pi$ admits a bounded Lebesgue density $\mfp(\theta)$ which is continuous and positive at $\tz$.
\label{Ass:prior}
\end{ass}

We are regarding $\tilde{\mbbh}_{n}(\theta)$ as our quasi-likelihood, hence it would be natural to define the modified marginal quasi-log likelihood function as
\begin{equation}
\mfL_{n} := \log\left(\int_{\Theta}\exp\{\tilde{\mbbh}_{n}(\theta)\}\mathfrak{p}(\theta)d\theta\right),
\label{hm:mmqlf_def}
\end{equation}
and select a model which maximizes this quantity.
The next theorem shows the precise asymptotic expansion of this quantity up to the order $O_{p}(1)$.

\begin{thm}
Suppose that Assumptions \ref{Ass1}, \ref{Ass2p}, \ref{Ass3} and \ref{Ass:prior} hold.
Then, we have
\begin{align}
\mfL_{n}
&=\tilde{\mbbh}_{n}(\tz)-\frac{1}{2}p_{\alpha}\log n-\frac{1}{2}p_{\beta}\log\left(nh_{0}\right)+\log \mathfrak{p}(\tz)+\frac{p}{2}\log(2\pi) \nn\\
&\quad-\frac{1}{2}\log\big|\tilde{\Gam}_{1,0}\big|-\frac{1}{2}\log\big|\tau\tilde{\Gam}_{2,0}\big|+\frac{1}{2}\diag(\tilde{\Gam}_{1,0},\tau\tilde{\Gam}_{2,0})^{-1}\big[\tilde{\D}_{n}^{\otimes2}\big]+o_{p}(1). \label{se:thm.bic1}
\end{align}
Further, we have
\begin{align}
\mfL_{n}&=\tilde{\mbbh}_{n}(\tilde{\theta}_{n})-\frac{1}{2}p_{\alpha}\log n-\frac{1}{2}p_{\beta}\log\left(nh(\tilde{\alpha}_{n})\right) +\log \mathfrak{p}(\tilde{\theta}_{n})+\frac{p}{2}\log(2\pi) \nn\\
&\quad-\frac{1}{2}\log\left|-\frac{1}{n}\p_{\alpha}^{2}\tilde{\mbbh}_{n}(\tilde{\theta}_{n})\right|-\frac{1}{2}\log\left|-\frac{1}{nh(\tilde{\alpha}_{n})}\p_{\beta}^{2}\tilde{\mbbh}_{n}(\tilde{\theta}_{n})\right|+o_{p}(1). \label{se:thm.bic2}
\end{align}
\label{se:bic}
\end{thm}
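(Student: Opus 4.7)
The plan is to apply Laplace's method to the integral in \eqref{hm:mmqlf_def}, using the PLDI of Theorem \ref{se:thm.pldi} to control the tails. I will derive \eqref{se:thm.bic2} first, and then deduce \eqref{se:thm.bic1} from it via a second Taylor expansion around $\tz$.

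First I factor out $\exp\{\tilde{\mbbh}_n(\tet)\}$ and change variables $\theta = \tet + D_n^{-1}u$, where $D_n := \diag(\sqrt{n}\,I_{p_\alpha},\, \sqrt{nh(\aet)}\,I_{p_\beta})$. The Jacobian $|D_n|^{-1}$ contributes the penalty terms $-\tfrac{1}{2}p_\alpha\log n - \tfrac{1}{2}p_\beta\log(nh(\aet))$ after taking logarithms. I then split the $u$-integral into a bulk region $\{\|u\|\le M_n\}$ and its complement, with $M_n\to\infty$ slowly. On the bulk, the first-order condition $\p_\theta\tilde{\mbbh}_n(\tet) = 0$ (valid a.s. for large $n$ by consistency of $\tet$) gives
$$\tilde{\mbbh}_n(\tet + D_n^{-1}u) - \tilde{\mbbh}_n(\tet) = -\tfrac{1}{2}A_n[u^{\otimes 2}] + R_n(u),\qquad A_n := -D_n^{-1}\p_\theta^2\tilde{\mbbh}_n(\tet)D_n^{-1}.$$
The off-diagonal block of $A_n$ vanishes in probability thanks to the rate gap between $\sqrt{n}$ and $\sqrt{nh(\aet)}$, while the diagonal blocks converge in probability to $\tilde{\Gam}_{1,0}$ and $\tilde{\Gam}_{2,0}$, by the same computations underlying Theorem \ref{hm:thm2.diffusion.an}. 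The cubic remainder $R_n(u)$ is controlled uniformly for $\|u\|\le M_n$ via ergodic-type bounds on the third derivatives of $\tilde{\mbbh}_n$.

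The main obstacle is the tail contribution. Here I leverage the PLDI \eqref{pldi1}-\eqref{pldi2}, translated from $\tz$ to $\tet$ using $\|D_n(\tet - \tz)\| = O_p(1)$ (which follows from Theorem \ref{hm:thm2.diffusion.an} combined with the moment bounds afforded by Theorem \ref{se:thm.pldi}), to show that the tail integral is $o_p(|D_n|^{-1})$ for any $M_n$ growing to infinity. Combined with dominated convergence on the bulk, and continuity of $\mathfrak{p}$ at $\tz$, this yields
$$\int_\Theta\exp\{\tilde{\mbbh}_n(\theta) - \tilde{\mbbh}_n(\tet)\}\mathfrak{p}(\theta)d\theta = |D_n|^{-1}(2\pi)^{p/2}|A_n|^{-1/2}\mathfrak{p}(\tet)\big(1+o_p(1)\big),$$
and taking $\log$ produces \eqref{se:thm.bic2}.

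Finally, \eqref{se:thm.bic1} follows from \eqref{se:thm.bic2} by combining it with the standard M-estimator quadratic expansion
$$\tilde{\mbbh}_n(\tet) = \tilde{\mbbh}_n(\tz) + \tfrac{1}{2}\diag(\tilde{\Gam}_{1,0},\, \tau\tilde{\Gam}_{2,0})^{-1}\big[\tilde{\D}_n^{\otimes 2}\big] + o_p(1),$$
where $\tilde{\D}_n$ denotes the appropriately scaled score at $\tz$; this identity is obtained from a second-order Taylor expansion at $\tz$ together with $\p_\theta\tilde{\mbbh}_n(\tet)=0$. The continuity replacements $\mathfrak{p}(\tet)\to\mathfrak{p}(\tz)$ and $-D_n^{-1}\p_\theta^2\tilde{\mbbh}_n(\tet)D_n^{-1}\to\diag(\tilde{\Gam}_{1,0},\, \tau\tilde{\Gam}_{2,0})$ close the argument, with the factor $\tau^{p_\beta}$ arising from $h(\aet) = \tau h_0(1+o_p(1))$ repackaging $-\tfrac{1}{2}p_\beta\log(nh(\aet))$ and $-\tfrac{1}{2}\log|\tilde{\Gam}_{2,0}|$ into $-\tfrac{1}{2}p_\beta\log(nh_0) - \tfrac{1}{2}\log|\tau\tilde{\Gam}_{2,0}|$.
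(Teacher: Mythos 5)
Your overall strategy --- a Laplace approximation of $\int\exp\{\tilde{\mbbh}_{n}(\theta)\}\mfp(\theta)\,d\theta$ after rescaling by $D_{n}$, with the quadratic expansion $\tilde{\mbbh}_{n}(\tz)=\tilde{\mbbh}_{n}(\tet)-\frac{1}{2}\diag(\tilde{\Gam}_{1,0},\tau\tilde{\Gam}_{2,0})^{-1}[\tilde{\D}_{n}^{\otimes2}]+o_{p}(1)$ linking the two displayed expansions --- is the same mechanism the paper uses (packaged there as Theorem \ref{hm:thm.qbic}); the fact that you center at $\tet$ and derive \eqref{se:thm.bic2} before \eqref{se:thm.bic1}, rather than the reverse, is immaterial.

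The genuine problem is your treatment of the tail integral. You control the region $\|u\|\ge M_{n}$ by invoking the PLDI of Theorem \ref{se:thm.pldi}, but that theorem requires Assumption \ref{Ass1+} (uniform ellipticity and boundedness of $S$, the dissipativity condition on the drift, exponential moments of $X_{0}$) together with the extra sampling condition $nh_{0}\ge n^{\ep_{0}}$. None of these appear among the hypotheses of Theorem \ref{se:bic}, which assumes only \ref{Ass1}, \ref{Ass2p}, \ref{Ass3} and \ref{Ass:prior}; indeed the paper explicitly advertises that the expansion holds ``under essentially weaker conditions'' than the PLDI setting. Under the stated assumptions your tail bound is therefore not justified, and you would only have proved the theorem under a strictly stronger hypothesis. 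The paper's route avoids this: the intermediate region $M_{n}\le\|u\|\le\delta\sqrt{r_{i,n}}$ is handled by the pair of conditions \eqref{appendix.thm5}--\eqref{appendix.thm6}, namely a polynomial-rate uniform convergence $r_{i,n}^{q/2}\sup|\mbby_{i,n}-\mbby_{i}|\cip0$ (which follows from the $O_{p}^{\ast}(\sqrt{h_{0}})$ bounds \eqref{se:aux.2}, \eqref{hm:aux.1}, \eqref{se:aux.3} available under \ref{Ass1}--\ref{Ass2p} alone) together with the quadratic separation $\tilde{\mbby}_{0}^{1}(\al)\le-\chi_{1}\|\al-\al_{0}\|^{2}$, $\tilde{\mbby}_{0}^{2}(\beta)\le-\chi_{2}\|\beta-\beta_{0}\|^{2}$ coming from Assumption \ref{Ass3} and compactness. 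To repair your argument you should replace the appeal to Theorem \ref{se:thm.pldi} by this weaker-but-sufficient tail control (or, equivalently, verify the hypotheses of Theorem \ref{hm:thm.qbic} directly). A minor additional point: $\|D_{n}(\tet-\tz)\|=O_{p}(1)$ already follows from the asymptotic normality in Theorem \ref{hm:thm2.diffusion.an}; no moment bounds from the PLDI are needed for that step either.
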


The proof given in Section \ref{proof:thm.bic} goes through as in \cite{EguMas18a} under essentially weaker conditions due to Theorem \ref{hm:thm.qbic}.

\medskip

In view of \eqref{se:thm.bic2}, with the conventional multiplication by $-2$ we obtain
\begin{align*}
-2
\mfL_{n}
&=-2\tilde{\mbbh}_{n}(\tilde{\theta}_{n})+p_{\alpha}\log n+p_{\beta}\log\left( nh(\tilde{\alpha}_{n}) \right)+O_{p}(1) \\
&=-2\tilde{\mbbh}_{n}(\tilde{\theta}_{n})+\log\left|-\p_{\alpha}^{2}\tilde{\mbbh}_{n}(\tilde{\theta}_{n})\right|+\log\left|-\p_{\beta}^{2}\tilde{\mbbh}_{n}(\tilde{\theta}_{n})\right|+O_{p}(1).
\end{align*}
Ignoring the $O_{p}(1)$ parts, we define the \textit{modified Bayesian information criterion (mBIC)} and \textit{modified quasi-Bayesian information criterion (mQBIC)} by
\begin{align*}
\mbic=-2\tilde{\mbbh}_{n}(\tilde{\theta}_{n})+p_{\alpha}\log n+p_{\beta}\log\left( nh(\tilde{\alpha}_{n}) \right)
\end{align*}
and
\begin{align*}
\mqbic=-2\tilde{\mbbh}_{n}(\tilde{\theta}_{n})+\log\left|-\p_{\alpha}^{2}\tilde{\mbbh}_{n}(\tilde{\theta}_{n})\right|+\log\left|-\p_{\beta}^{2}\tilde{\mbbh}_{n}(\tilde{\theta}_{n})\right|,
\end{align*}
respectively, both being completely free from $h$.
Since the difference between mBIC and mQBIC is $O_{p}(1)$, we can regard that two criteria are asymptotically equivalent in the sense of BIC type criterion.
As directly seen by the definition, the mBIC has lower computational load than the mQBIC, and the mQBIC enables us to incorporate a model-complexity bias correction taking the observed information into account.

Suppose that candidates for the diffusion and drift coefficients and $\tau$ are given as
\begin{align}
& a_{1}(x,\alpha_{1}),\dots,a_{M_{1}}(x,\alpha_{M_{1}}), \label{hm:ms.a} \\
& b_{1}(x,\alpha_{m_{1}},\beta_{1}),\ldots,b_{M_{2}}(x,\alpha_{m_{1}},\beta_{M_{2}}), \quad m_{1}=1,\dots,M_{1},
\label{hm:ms.b}
\end{align}
where $\theta_{m_{1},m_{2}}=(\alpha_{m_{1}},\beta_{m_{2}})\in\Theta_{\alpha_{m_{1}}}\times\Theta_{\beta_{m_{2}}}\subset\mbbr^{p_{\alpha_{m_{1}}}}\times\mbbr^{p_{\beta_{m_{2}}}}$.
Then, each candidate model $\mcm_{m_{1},m_{2}}$ is given by
\begin{align*}
dX_{t}=\sqrt{\tau_{m_{1},m_{2}}}a_{m_{1}}(X_{t},\alpha_{m_{1}})dw_{t}+\tau_{m_{1},m_{2}}b_{m_{2}}(X_{t},\theta_{m_{1},m_{2}})dt, \quad t\in[0,T_{n}], \quad X_{0}=x_{0}.
\end{align*}
Here $a_{m_{1}}$ is an $\mbbr^{d}\otimes\mbbr^{d}$-valued function defined on $\mbbr^{d}\times\Theta_{\alpha_{m_{1}}}$, $b_{m_{2}}$ is an $\mbbr^{d}$-valued function defined on $\mbbr^{d}\times\Theta_{\alpha_{m_{1}}}\times\Theta_{\beta_{m_{2}}}$, and $\tau_{m_{1},m_{2}}$ is an unknown positive constant.
Write $(\mcm_{m_{1},m_{2}})_{m_{1}\leq M_{1}, m_{2}\leq M_{2}}$ for the set of all candidate models.
For each candidate model $\mcm_{m_{1},m_{2}}$, we assume that 
there exists a value $\theta_{m_{1},m_{2},0}=(\alpha_{m_{1},0},\beta_{m_{2},0}) \in \Theta_{\alpha_{m_{1}}}\times\Theta_{\beta_{m_{2}}}$ for which 
$a_{m_{1}}(\cdot,\alpha_{m_{1},0})$ and $b_{m_{2}}(\cdot,\theta_{m_{1},m_{2},0})$ coincide with the true (data generating) diffusion and drift coefficients, respectively.
We compute mBIC for each candidate model, say $\mbic^{(1,1)},\ldots,\mbic^{(M_{1},M_{2})}$, and then select
the model having the minimum-mBIC value as the best one, say $\mcm_{m_{1,n}^{\ast},m_{2,n}^{\ast}}$:
\begin{align*}
\{(m_{1,n}^{\ast},m_{2,n}^{\ast})\}=\argmin_{(m_{1},m_{2})}\mbic^{(m_{1},m_{2})},
\end{align*}
where
\begin{align*}
\mbic^{(m_{1},m_{2})}&=-2\tilde{\mbbh}_{n}^{(m_{1},m_{2})}(\tilde{\theta}_{m_{1},m_{2},n})+p_{\alpha_{m_{1}}}\log n+p_{\beta_{m_{2}}}\log \left( nh_{m_{1}}(\tilde{\alpha}_{m_{1},n}) \right), 
\end{align*}
with $\tilde{\theta}_{m_{1},m_{2},n}$ denoting the mGQMLE associated with the mGQLF $\tilde{\mbbh}_{n}^{(m_{1},m_{2})}$ of (\ref{hm:mGQMLE.def}) associated with the model $\mcm_{m_{1},m_{2}}$.
The selection rule when using the mQBIC is given in a similar manner.

\medskip

It is worth mentioning that a \textit{two-step model selection} is possible as in \cite[Section5.2]{EguMas18a}.
We proceed as follows.

\begin{itemize}
\item First, we select the best diffusion coefficient $a_{m_{1,n}^{\ast}}$ among \eqref{hm:ms.a}, where $m_{1,n}^{\ast}$ satisfies $\{m_{1,n}^{\ast}\}=\argmin_{m_{1}}\mbic^{(m_{1})}$ with
\begin{align*}
\mbic^{(m_{1})}=-2\tilde{\mbbh}_{1,n}^{(m_{1})}(\tilde{\alpha}_{m_{1},n}^{\prime})+p_{\alpha_{m_{1}}}\log n,
\end{align*}
$\tilde{\alpha}_{m_{1},n}^{\prime}\in\argmax_{\alpha_{m_{1}}}\tilde{\mbbh}_{1,n}^{(m_{1})}(\alpha_{m_{1}})$, and $\tilde{\mbbh}_{1,n}^{(m_{1})}$ corresponds to
\eqref{hm:def_H1n_al} with ignoring the drift. 
\item Next, among \eqref{hm:ms.b} for $m_{1}=m_{1,n}^{\ast}$, we select the best drift coefficient with index $m_{2,n}^{\ast}$ such that $\{m_{2,n}^{\ast}\}=\argmin_{m_{2}}\mbic^{(m_{2}|m_{1,n}^{\ast})}$, where
\begin{align*}
\mbic^{(m_{2}|m_{1,n}^{\ast})}=-2\tilde{\mbbh}_{2,n}^{(m_{2}|m_{1,n}^{\ast})}(\tilde{\alpha}_{m_{1,n}^{\ast},n}^{\prime},\tilde{\beta}_{m_{2},n}^{\prime})
+p_{\beta_{m_{2}}}\log\left( nh_{m_{1,n}^{\ast}}(\tilde{\alpha}_{m_{1,n}^{\ast},n}^{\prime})\right),
\end{align*}
$\tilde{\beta}_{m_{2},n}^{\prime}\in\argmax_{\beta_{m_{2}}}\tilde{\mbbh}_{2,n}^{(m_{2}|m_{1,n}^{\ast})}(\tilde{\alpha}_{m_{1,n}^{\ast},n}^{\prime},\beta_{m_{2}})$, and $\tilde{\mbbh}_{2,n}^{(m_{2}|m_{1})}$ corresponds to \eqref{se:def_H2n_be} with the previously selected diffusion coefficient plugged-in. 
\item Finally, we select the model $\mcm_{m_{1,n}^{\ast},m_{2,n}^{\ast}}$ as the final best model among the candidates described by \eqref{hm:ms.a} and \eqref{hm:ms.b}.
\end{itemize} 
We can apply this procedure to the mQBIC as well. 
The total number of candidate models in the joint and two-step model selections are $M_{1}\times M_{2}$ and $M_{1}+M_{2}$, respectively.
This indicates that difference between computational costs for the joint and two-step selection procedures becomes more significant when $M_{1}(\geq2)$ or $M_{2}(\geq2)$ (or both) is large.

\medskip


We assume that the model indexes $m_{1,0}$ and $m_{2,0}$ are uniquely determined, that is
\begin{align*}
\{m_{1,0}\}&=\argmin_{m_{1}}\dim(\Theta_{m_{1}}), \\
\{m_{2,0}\}&=\argmin_{m_{2}}\dim(\Theta_{m_{2}}),
\end{align*}
respectively. 
Then, we say that $\mcm_{m_{1,0},m_{2,0}}$ is the optimal model. 
The following theorem ensures that the probability that the true model $\mcm_{m_{1,0},m_{2,0}}$ is selected by using m(Q)BIC tends to 1 as $n\to\infty$.

\begin{thm}
Suppose that Assumptions \ref{Ass1}, \ref{Ass2p}, \ref{Ass3} and \ref{Ass:prior} hold for the all candidate models $\mcm_{m_{1},m_{2}}$.
Then, the joint and two-step model-selection consistencies hold in the following senses.
\begin{enumerate}
\item Suppose that at least one of $m_{1}$ and $m_{2}$ differs from $m_{1,0}$ and $m_{2,0}$, respectively.
Then, we have
\begin{align*}
\lim_{n\to\infty}\mathbb{P}\left(\mbic^{(m_{1,0},m_{2,0})}-\mbic^{(m_{1},m_{2})}<0\right)&=1,
\end{align*} 
and the same statement holds with ``$\mathrm{mBIC}$" replaced by ``$\mathrm{mQBIC}$".
\item For each $(m_{1},m_{2})\in(\{1,\ldots,M_{1}\}\backslash\{m_{1,0}\})\times(\{1,\ldots,M_{2}\}\backslash\{m_{2,0}\})$, we have
\begin{align*}
& \lim_{n\to\infty}\mathbb{P}\left(\mbic^{(m_{1,0})}-\mbic^{(m_{1})}<0\right) =1, \\
& \lim_{n\to\infty}\mathbb{P}\left(\mbic^{(m_{2,0}|m_{1,n}^{\ast})}-\mbic^{(m_{2}|m_{1,n}^{\ast})}<0\right)=1,
\end{align*} 
and the same statements hold with ``$\mathrm{mBIC}$" replaced by ``$\mathrm{mQBIC}$".
\end{enumerate}
\label{se:model.consis}
\end{thm}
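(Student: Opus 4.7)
The plan is to reduce both assertions to the asymptotic expansion in Theorem \ref{se:bic} and to classify each candidate $\mcm_{m_1,m_2}$ with $(m_1,m_2)\ne(m_{1,0},m_{2,0})$ as either \emph{misspecified} (no admissible $\theta$ makes $(a_{m_1},b_{m_2})$ coincide with the true coefficients) or \emph{correct-but-overspecified} (such a $\theta$ exists but, by the uniqueness of $(m_{1,0},m_{2,0})$, the dimension strictly exceeds the optimal one). Since $\mqbic - \mbic = O_p(1)$ uniformly over the finite family of candidates, it suffices to handle $\mbic$.

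For a misspecified candidate I would invoke the uniform-in-$\theta$ law of large numbers (as in the consistency argument referenced in Section \ref{hm:proof.consistency}) to establish that
\begin{align*}
\frac{1}{n}\bigl\{\tilde{\mbbh}_{1,n}^{(m_{1,0})}(\tilde{\alpha}_{m_{1,0},n}) - \tilde{\mbbh}_{1,n}^{(m_1)}(\tilde{\alpha}_{m_1,n})\bigr\} \cip c_1 > 0
\end{align*}
whenever $a_{m_1}$ cannot represent the true diffusion, with an analogous $(nh_0)^{-1}$-scaled limit $c_2>0$ for the $\tilde{\mbbh}_{2,n}$-piece when the drift fails to match under a correctly specified diffusion. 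The divergence rates $n$ and $nh_0$ dominate the $O(\log n)$ and $O(\log(nh_0))$ penalty differences, so $\mbic^{(m_{1,0},m_{2,0})} - \mbic^{(m_1,m_2)} \to -\infty$ in probability.

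For a correct-but-overspecified candidate I would apply the expansion \eqref{se:thm.bic2} to both the true and the candidate models. After Taylor expansion of $\tilde{\mbbh}_n$ around $\tz$ with the scaling matrix $D_n$ and invoking the joint asymptotic normality in Theorem \ref{hm:thm2.diffusion.an}, the quasi-log-likelihood difference reduces to a tight quadratic form in the rescaled scores, yielding
\begin{align*}
\tilde{\mbbh}_n^{(m_1,m_2)}(\tilde{\theta}_{m_1,m_2,n}) - \tilde{\mbbh}_n^{(m_{1,0},m_{2,0})}(\tilde{\theta}_{m_{1,0},m_{2,0},n}) = O_p(1).
\end{align*}
Combined with $\log(nh_{m_1}(\tilde{\alpha}_{m_1,n})) = \log(nh_0) + O_p(1) \to \infty$, which follows from $\tilde{h}/(\tau h_0) = 1 + O_p(n^{-1/2})$ in \eqref{hm:diffusion.an1+}, this gives
\begin{align*}
\mbic^{(m_{1,0},m_{2,0})} - \mbic^{(m_1,m_2)} = -\Delta p_{\alpha}\log n - \Delta p_{\beta}\log(nh_0) + O_p(1) \to -\infty
\end{align*}
in probability, where $\Delta p_{\alpha} = p_{\alpha_{m_1}} - p_{\alpha_{m_{1,0}}} \ge 0$ and $\Delta p_{\beta} = p_{\beta_{m_2}} - p_{\beta_{m_{2,0}}} \ge 0$ with at least one strict inequality.

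The two-step statement then follows by applying the same dichotomy sequentially: the first step compares only the diffusion via $\tilde{\mbbh}_{1,n}$, so the two cases above with the $\log n$ penalty give $m_{1,n}^{\ast} = m_{1,0}$ with probability tending to one; on this event the second step is a drift-selection problem with correctly specified diffusion, and the same argument applied to $\tilde{\mbbh}_{2,n}^{(\cdot|m_{1,0})}$ at rate $nh_0$ with penalty $\log(nh_0)$ yields $m_{2,n}^{\ast} = m_{2,0}$. The principal technical obstacle lies in the overspecified case: one must verify that, after profiling out the unknown $h$, the local quadratic approximation of $\tilde{\mbbh}_n$ near $\tz$ is uniformly valid jointly in the extra $\alpha$- and $\beta$-directions, and that the stochastic fluctuation of the random penalty $\log(nh_{m_1}(\tilde{\alpha}_{m_1,n}))$ about its deterministic surrogate $\log(nh_0)$ is negligible compared to the diverging $\log n$-factor multiplying the positive dimension differences.
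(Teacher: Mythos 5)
Your treatment of the correct-but-overspecified case is, in substance, the paper's own proof: since every candidate $\mcm_{m_{1},m_{2}}$ is assumed to contain the true coefficients, a Taylor expansion around the pseudo-true value gives $\tilde{\mbbh}_{n}^{(m_{1},m_{2})}(\tilde{\theta}_{m_{1},m_{2},n})=\tilde{\mbbh}_{n}^{(m_{1},m_{2})}(\theta_{m_{1},m_{2},0})+O_{p}(1)$, the quasi-log-likelihoods of any two candidates agree at their pseudo-true values, and the penalty gap $\Delta p_{\alpha}\log n+\Delta p_{\beta}\log(nh_{0})$ diverges because $m_{1,0},m_{2,0}$ are the unique dimension minimizers; the $O_{p}(1)$ fluctuation of $\log h_{m_{1}}(\tilde{\alpha}_{m_{1},n})$ around $\log(\tau h_{0})$ is indeed negligible, as you note. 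The only cosmetic difference is that the paper routes the comparison through the intermediate model $(m_{1},m_{2,0})$, changing one coefficient at a time, whereas you compare directly; both are valid. The one point you should correct is the ``misspecified'' branch: under the theorem's hypotheses it is vacuous, because Assumptions \ref{Ass1}, \ref{Ass2p}, \ref{Ass3} and \ref{Ass:prior} are imposed on \emph{all} candidates and the setup explicitly postulates a value $\theta_{m_{1},m_{2},0}$ at which $a_{m_{1}}$ and $b_{m_{2}}$ reproduce the true diffusion and drift, so every non-optimal candidate is overspecified rather than misspecified. Moreover, the positive separation $c_{1}>0$ you assert for a genuinely misspecified diffusion is not established anywhere in the paper --- the authors explicitly defer the misspecified case to future work --- so that branch should either be deleted or flagged as lying outside the scope of the theorem rather than presented as part of its proof.
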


%


\begin{rem}{\rm 
When the model is misspecified in the sense that parametric specification of the coefficients is wrong, asymptotic property of estimators can essentially differ from the correctly specified case (see \cite{UchYos11}). In order to guarantee use of the BIC type criteria we need a suitable stochastic expansion of the associated marginal quasi log-likelihood function, which has not been explored in the literature as yet; once the stochastic expansion is derived, then it will be possible to deduce the model selection consistency in the same way as in \cite[Theorem 5.1]{EguMas18a}. We would like to leave this important issue as a future work.
}\qed
\end{rem}

\section{Simulation experiments} \label{se:simulation}

In this section, we present simulation results to evaluate finite sample performance of our estimation procedure.
We use the R package YUIMA \cite{YUIMA14} for generating data.
We set $d=1$ in the examples below, and all the Monte Carlo trials are based on 1000 independent sample paths.
Suppose that we have a sample $\mathbf{X}_{n}=(X_{t_{j}})_{j=0}^{n}$ with $t_{j}=jn^{-2/3}$ (hence $T_{n}=n^{1/3}$) from the true model ($\tau=1$):
\begin{align}
dX_{t}&=\exp \biggl\{\frac{1}{2}(2\sin X_{t}-\cos X_{t}\sin X_{t}) \biggr\}dw_{t}-X_{t}dt,\quad t\in[0,T_{n}], \quad X_{0}=1.
\label{se:simu.true}
\end{align}
The simulations are done for $n=1000,3000$, and $5000$.

\subsection{Parameter estimation} \label{Simu1}

We consider the diffusion process as the target of estimation:
\begin{align*}
dX_{t}=\exp\left\{\frac{1}{2}(\alpha_{1}\cos X_{t}+\alpha_{2}\sin X_{t}+\alpha_{3}\cos X_{t}\sin X_{t})\right\}dw_{t}+(\beta_{1}X_{t}+\beta_{2})dt.
\end{align*}
We set the true parameter values as $\theta_{0}=(\alpha_{0},\beta_{0})=(\alpha_{1,0},\alpha_{2,0},\alpha_{3,0},\beta_{1,0},\beta_{2,0})=(0,2,-1,-1,0)$.
It is easy to check that Assumptions \ref{Ass3} and \ref{Ass1+} hold, in particular,
\begin{align}
S^{-1}(x,\alpha)S(x,\alpha_{0})&=\exp\big\{(\alpha_{1,0}-\alpha_{1})\cos x+(\alpha_{2,0}-\alpha_{2})\sin x \nn\\
&{}\qquad +(\alpha_{3,0}-\alpha_{3})\cos x\sin x\big\}, \nn\\
S^{-1}(x,\alpha_{0})\p_{\alpha}S(x,\alpha_{0})&=(\cos x, \sin x)^{\top}.
\label{check.ass}
\end{align}
We computed the estimator $\tet=(\tilde{\alpha}_{n},\tilde{\beta}_{n})$ through both the proposed method (two-step and joint), and also the quasi-maximum likelihood estimators (QMLEs)
$\tes=(\aes,\bes)$ associated with (\ref{ErLf}) with using the true sampling rate $h_{0}=n^{-2/3}$.
For numerical optimization, we set the initial values of $\alpha_{1}$, $\alpha_{2}$, and $\alpha_{3}$ to be random numbers generated from uniform distribution $U(-1,1)$. 
Moreover, the initial values of $\beta_{1}$ and $\beta_{2}$ are generated from uniform distribution $U(-2,0)$.

Table \ref{esti1} summarizes the mean and standard deviation of the estimators. On this example, as is expected from our theoretical results,
we can observe in terms of the standard deviations that performance of $\tet$ is overall inferior to the known-$h$ case and that their performances tend to get close each other as $n$ increases.
Further, it is worth noting that the performances of estimating $h=h_{0}$ are equally good for the joint and two-step cases.

Here we have (recall \eqref{hm:thm2.diffusion.an-1})
\begin{align*}
\bar{u}_{n}^{\al}&=(\bar{u}_{1,n}^{\al},\bar{u}_{2,n}^{\al},\bar{u}_{3,n}^{\al})=\left(-\frac{1}{n}\p_{\alpha}^{2}\tilde{\mbbh}_{n}(\tet)\right)^{\frac{1}{2}} \sqrt{n}(\tilde{\alpha}_{n}-\alpha_{0}), \\
\bar{u}_{n}^{\beta}&=(\bar{u}_{1,n}^{\beta},\bar{u}_{2,n}^{\beta})=\left(-\frac{1}{n\tilde{h}}\p_{\beta}^{2}\tilde{\mbbh}_{n}(\tet)\right)^{\frac{1}{2}} \sqrt{n\tilde{h}}(\tilde{\beta}_{n}-\beta_{0}),
\end{align*}
respectively.
Figures \ref{hist1} and \ref{hist2} show the histograms of $\bar{u}_{n}^{\al}$ and $\bar{u}_{n}^{\beta}$ in the case of $n=5000$,
each corresponding to the results of the two-step and the joint estimations.

The residuals are given by
\begin{align*}
\tilde{\epsilon}_{j}=\frac{\D_{j}X-\tilde{h}(\tilde{\beta}_{1,n}X_{t_{j-1}}+\tilde{\beta}_{2,n})}{\sqrt{\tilde{h}}\exp\left\{\frac{1}{2}(\tilde{\alpha}_{1,n}\cos X_{t_{j-1}}+\tilde{\alpha}_{2,n}\sin X_{t_{j-1}}+\tilde{\alpha}_{3,n}\cos X_{t_{j-1}}\sin X_{t_{j-1}})\right\}}, \quad j=1,2,\ldots,n,
\end{align*}
which are expected to form an i.i.d. standard normal random variables.
Figure \ref{hist3} shows the histogram of $\tilde{\epsilon}=(\tilde{\epsilon}_{1},\ldots,\tilde{\epsilon}_{n})$ for $n=5000$, based on the 1000th sample data and results of estimation,
from which we can observe good performance of the standard-normal approximation.

\begin{table}[t]
\begin{center}
\caption{\footnotesize The mean and the standard deviation (s.d.) of the estimators (the true parameter $\theta_{0}=(0,2,-1,-1,0)$).}
\begin{tabular}{l r r r r r r} \hline
$n=1000$ & \multicolumn{6}{l}{} \\
\multicolumn{7}{c}{} \\[-8pt]
two-step & $\tilde{\alpha}_{1,n}$ & $\tilde{\alpha}_{2,n}$ & $\tilde{\alpha}_{3,n}$ & $\tilde{\beta}_{1,n}$ & $\tilde{\beta}_{2,n}$ & $\tilde{h}/h_{0}$ \\ \hline
mean & -0.0715 & 1.8710 & -0.8460 & -1.4484 & -0.0940 & 1.1068  \\ 
s.d. & 0.3079 & 0.3684 & 0.4766 & 0.8892 & 0.5061 & 0.3266 \\ \hline\hline
\multicolumn{7}{c}{} \\[-8pt]
joint & $\tilde{\alpha}_{1,n}$ & $\tilde{\alpha}_{2,n}$ & $\tilde{\alpha}_{3,n}$ & $\tilde{\beta}_{1,n}$ & $\tilde{\beta}_{2,n}$ & $\tilde{h}/h_{0}$ \\ \hline
mean & 0.0329 & 1.9367 & -0.9229 & -1.6618 & -0.1264 & 1.0102 \\ 
s.d. & 0.3217 & 0.3888 & 0.4973 & 1.2128 & 0.6238 & 0.2822 \\ \hline\hline
\multicolumn{7}{c}{} \\[-8pt]
QMLEs & $\tilde{\alpha}_{1,n}$ & $\tilde{\alpha}_{2,n}$ & $\tilde{\alpha}_{3,n}$ & $\tilde{\beta}_{1,n}$ & $\tilde{\beta}_{2,n}$ & $\tilde{h}/h_{0}$ \\ \hline
mean & 0.0057 & 1.9209 & -0.9380 & -1.4937 & -0.2236 & -- \\ 
s.d. & 0.0620 & 0.2806 & 0.3830 & 0.6064 & 0.3058 & -- \\ \hline\hline
$n=3000$ & \multicolumn{6}{l}{} \\
\multicolumn{7}{c}{} \\[-8pt]
two-step & $\tilde{\alpha}_{1,n}$ & $\tilde{\alpha}_{2,n}$ & $\tilde{\alpha}_{3,n}$ & $\tilde{\beta}_{1,n}$ & $\tilde{\beta}_{2,n}$ & $\tilde{h}/h_{0}$ \\ \hline
mean & -0.0314 & 1.9474 & -0.9356 & -1.2864 & -0.0619 & 1.0374  \\ 
s.d. & 0.1510 & 0.1826 & 0.2447 & 0.5429 & 0.3517 & 0.1469 \\ \hline\hline
\multicolumn{7}{c}{} \\[-8pt]
joint & $\tilde{\alpha}_{1,n}$ & $\tilde{\alpha}_{2,n}$ & $\tilde{\alpha}_{3,n}$ & $\tilde{\beta}_{1,n}$ & $\tilde{\beta}_{2,n}$ & $\tilde{h}/h_{0}$ \\ \hline
mean & 0.0085 & 1.9786 & -0.9724 & -1.3539 & -0.0799 & 1.0025 \\ 
s.d. & 0.1525 & 0.1895 & 0.2529 & 0.6031 & 0.3842 & 0.1431 \\ \hline\hline
\multicolumn{7}{c}{} \\[-8pt]
QMLEs & $\tilde{\alpha}_{1,n}$ & $\tilde{\alpha}_{2,n}$ & $\tilde{\alpha}_{3,n}$ & $\tilde{\beta}_{1,n}$ & $\tilde{\beta}_{2,n}$ & $\tilde{h}/h_{0}$ \\ \hline
mean & 0.0036 & 1.9745 & -0.9832 & -1.3357 & -0.1633 & -- \\ 
s.d. & 0.0354 & 0.1340 & 0.1877 & 0.4892 & 0.2418 & -- \\ \hline\hline
$n=5000$ & \multicolumn{6}{l}{} \\
\multicolumn{7}{c}{} \\[-8pt]
two-step & $\tilde{\alpha}_{1,n}$ & $\tilde{\alpha}_{2,n}$ & $\tilde{\alpha}_{3,n}$ & $\tilde{\beta}_{1,n}$ & $\tilde{\beta}_{2,n}$ & $\tilde{h}/h_{0}$ \\ \hline
mean & -0.0207 & 1.9644 & -0.9543 & -1.2535 & -0.0625 & 1.0229  \\ 
s.d. & 0.1037 & 0.1348 & 0.1815 & 0.4488 & 0.3044 & 0.0954 \\ \hline\hline
\multicolumn{7}{c}{} \\[-8pt]
joint & $\tilde{\alpha}_{1,n}$ & $\tilde{\alpha}_{2,n}$ & $\tilde{\alpha}_{3,n}$ & $\tilde{\beta}_{1,n}$ & $\tilde{\beta}_{2,n}$ & $\tilde{h}/h_{0}$ \\ \hline
mean & 0.0103 & 1.9858 & -0.9813 & -1.3324 & -0.0939 & 0.9968 \\ 
s.d. & 0.1131 & 0.1476 & 0.1970 & 0.5655 & 0.3606 & 0.0998 \\ \hline\hline
\multicolumn{7}{c}{} \\[-8pt]
QMLEs & $\tilde{\alpha}_{1,n}$ & $\tilde{\alpha}_{2,n}$ & $\tilde{\alpha}_{3,n}$ & $\tilde{\beta}_{1,n}$ & $\tilde{\beta}_{2,n}$ & $\tilde{h}/h_{0}$ \\ \hline
mean & 0.0031 & 1.9826 & -0.9866 & -1.2947 & -0.1495 & -- \\ 
s.d. & 0.0259 & 0.1105 & 0.1488 & 0.4268 & 0.2129 & -- \\ \hline
\end{tabular}
\label{esti1}
\end{center}
\end{table}

\begin{figure}[t]
\caption{Results of estimating $\alpha_{1}$(top left), $\alpha_{2}$(top right), $\alpha_{3}$(center left), $\beta_{1}$(center right), and $\beta_{2}$(bottom) ((i): two-step, (ii): joint, (iii) QMLE). The red line in each figure indicates the true value.}
\begin{tabular}{c}
\ \\

\begin{minipage}{0.5 \hsize}
\begin{center}
\includegraphics[scale=0.49]{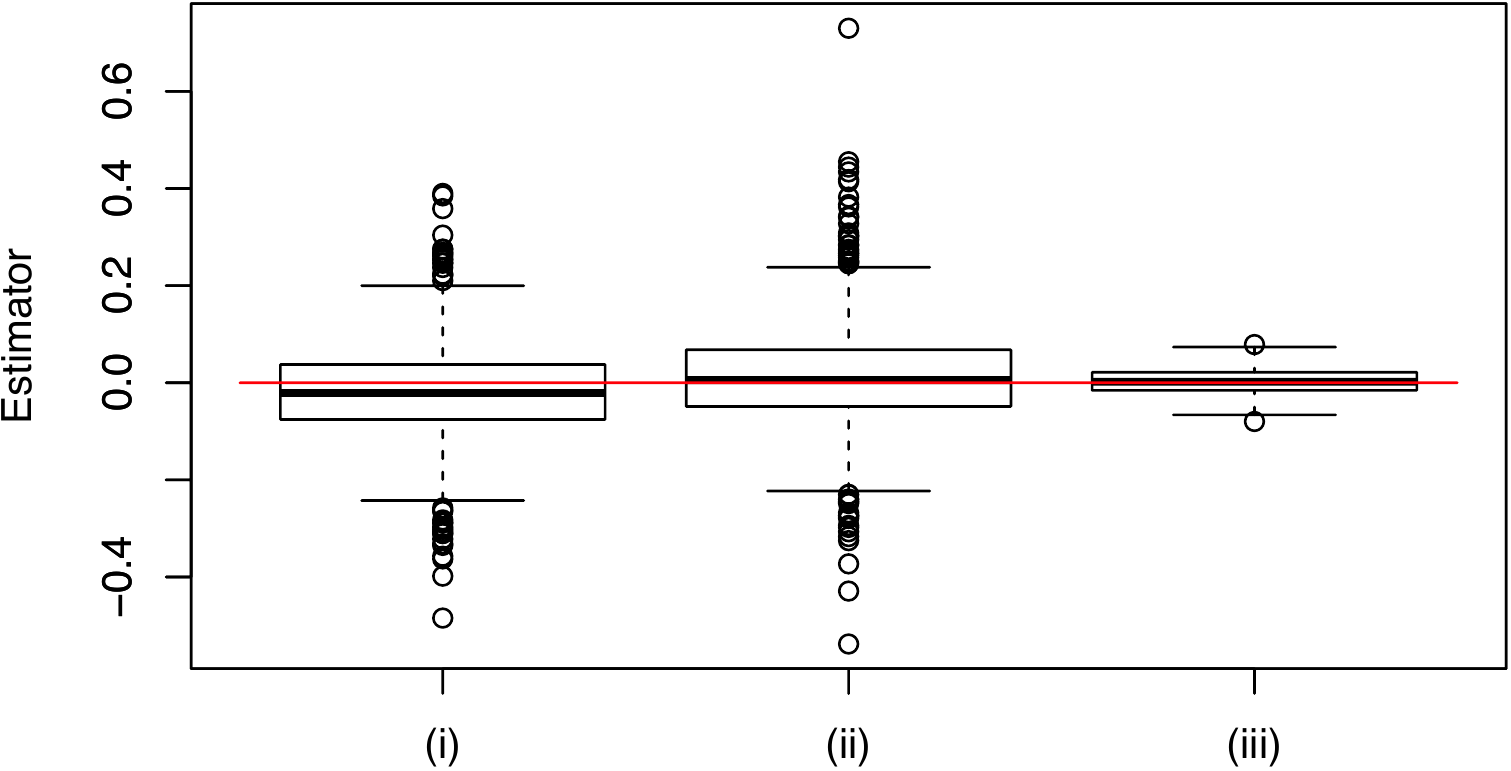}
\end{center}
\end{minipage}

\begin{minipage}{0.5 \hsize}
\begin{center}
\includegraphics[scale=0.49]{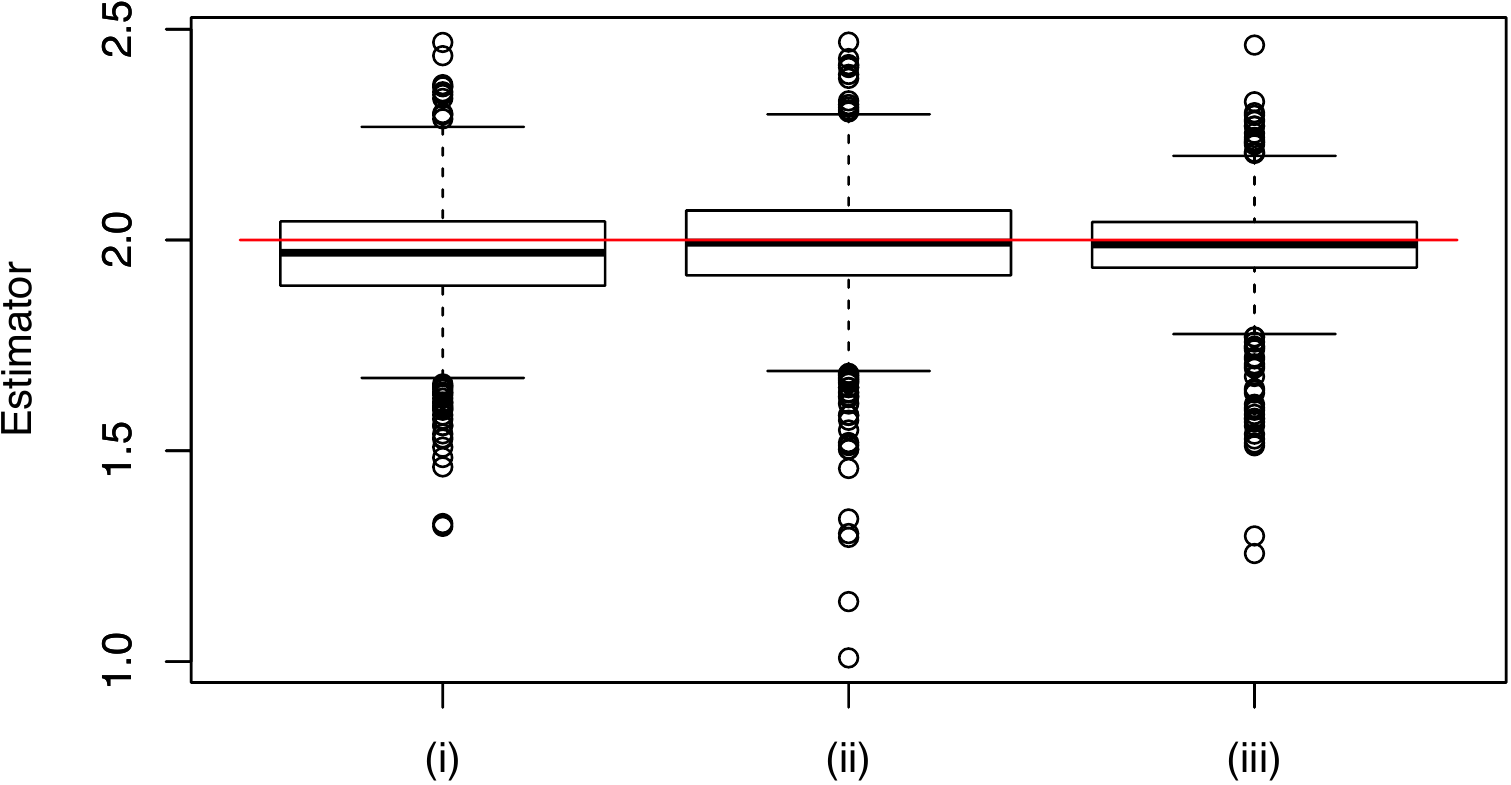}
\end{center}
\end{minipage}

\bigskip \\

\begin{minipage}{0.5 \hsize}
\begin{center}
\includegraphics[scale=0.49]{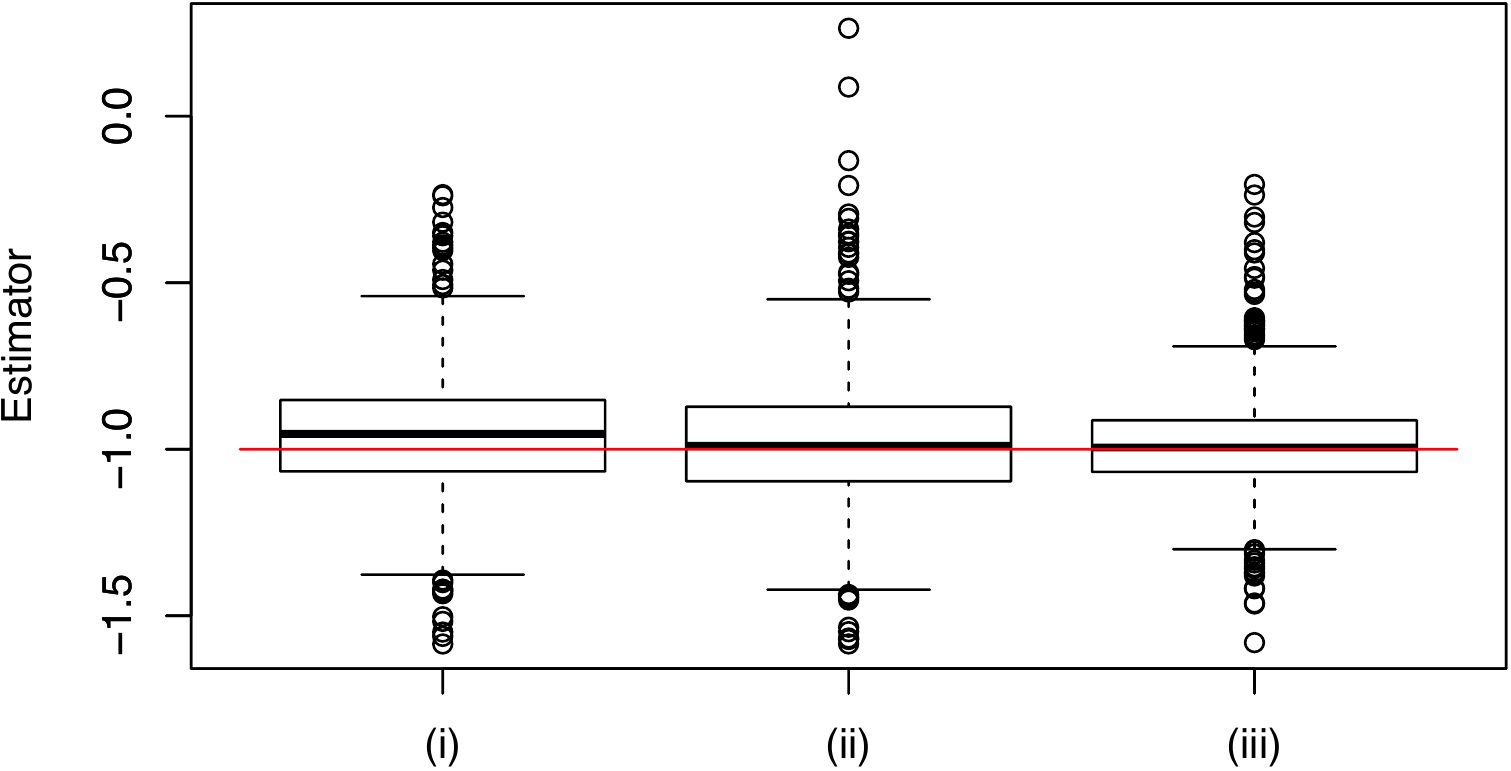}
\end{center}
\end{minipage}

\begin{minipage}{0.5 \hsize}
\begin{center}
\includegraphics[scale=0.49]{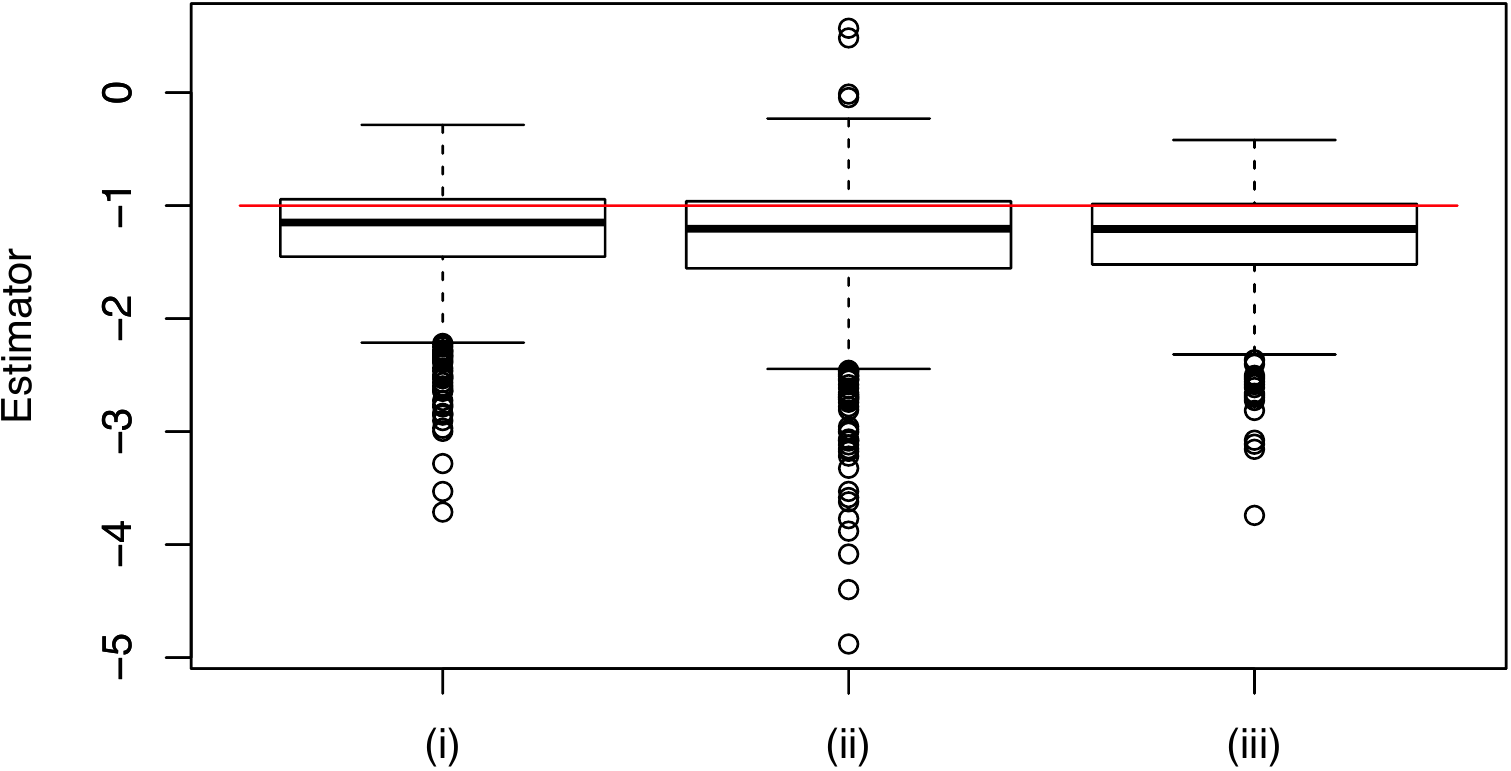}
\end{center}
\end{minipage}

\bigskip \\

\begin{minipage}{0.5 \hsize}
\begin{center}
\includegraphics[scale=0.49]{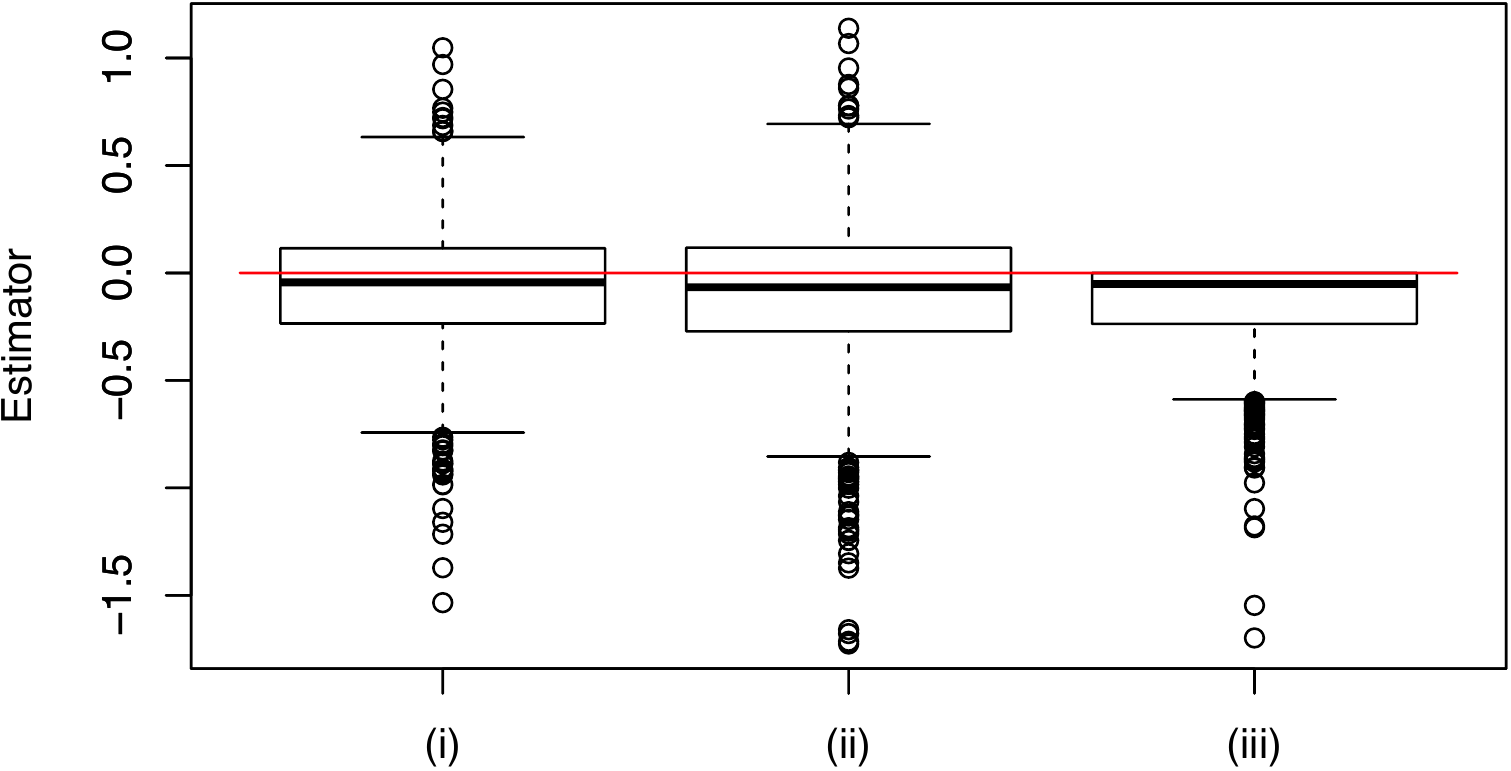}
\end{center}
\end{minipage}

\end{tabular}
\label{boxplot}
\end{figure}


\begin{figure}[h]
\caption{Histograms of $\bar{u}^{\al}_{n}$ and $\bar{u}^{\beta}_{n}$ based on the two-step estimation
(top left: $\bar{u}^{\al}_{1,n}$, top right: $\bar{u}^{\al}_{2,n}$, center left: $\bar{u}^{\al}_{3,n}$, center right: $\bar{u}^{\beta}_{1,n}$, bottom: $\bar{u}^{\beta}_{2,n}$).}
\begin{tabular}{c}
\ \\

\begin{minipage}{0.5 \hsize}
\begin{center}
\includegraphics[scale=0.49]{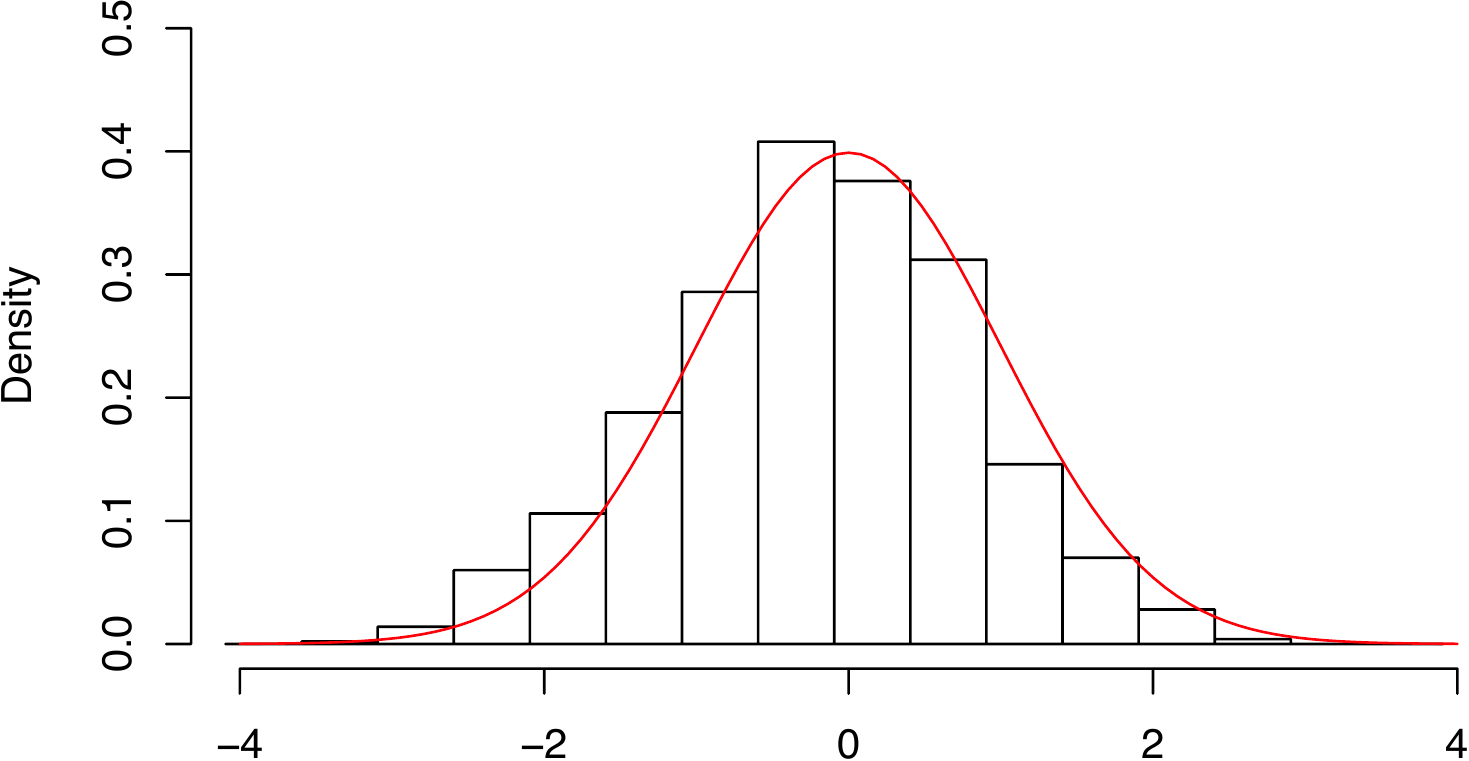}
\end{center}
\end{minipage}

\begin{minipage}{0.5 \hsize}
\begin{center}
\includegraphics[scale=0.49]{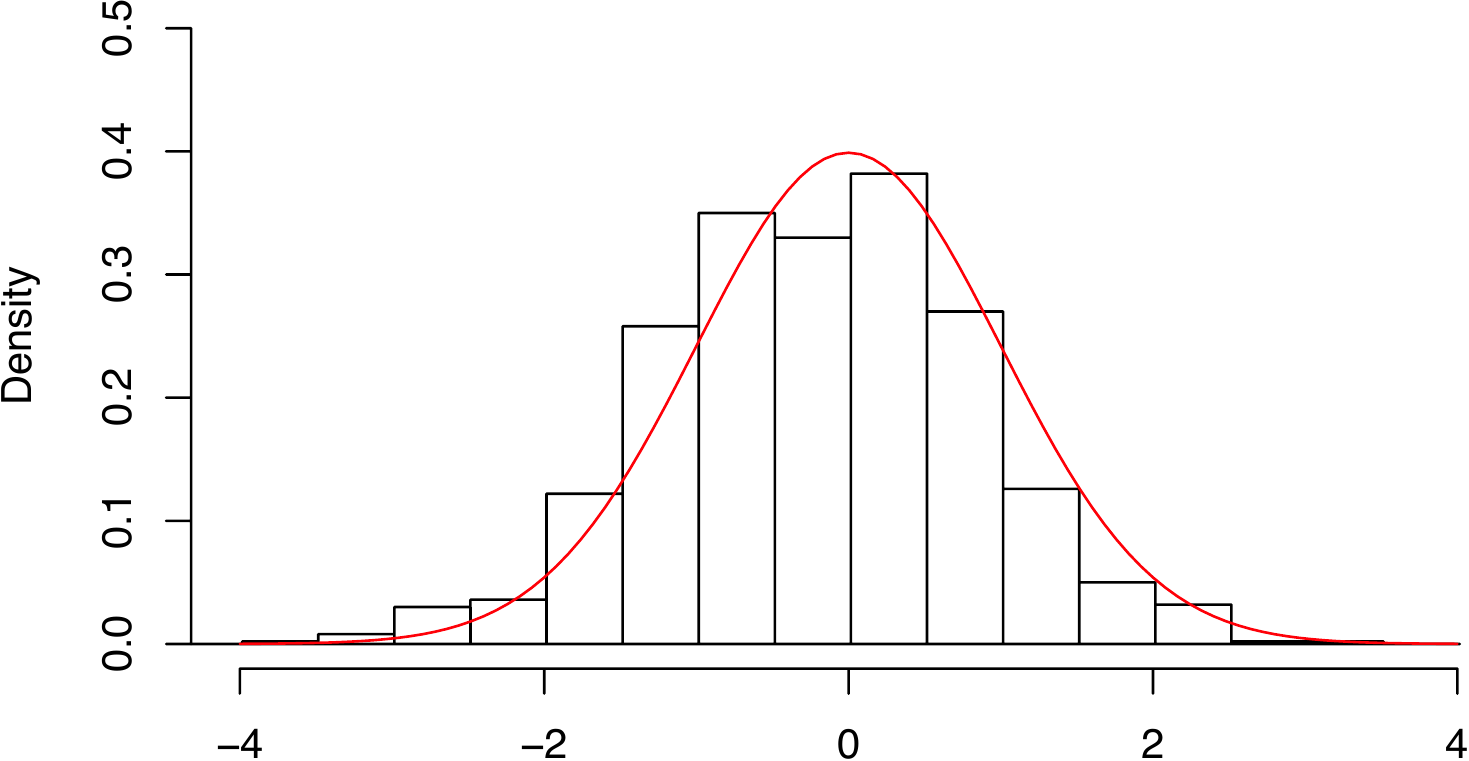}
\end{center}
\end{minipage}

\bigskip \\

\begin{minipage}{0.5 \hsize}
\begin{center}
\includegraphics[scale=0.49]{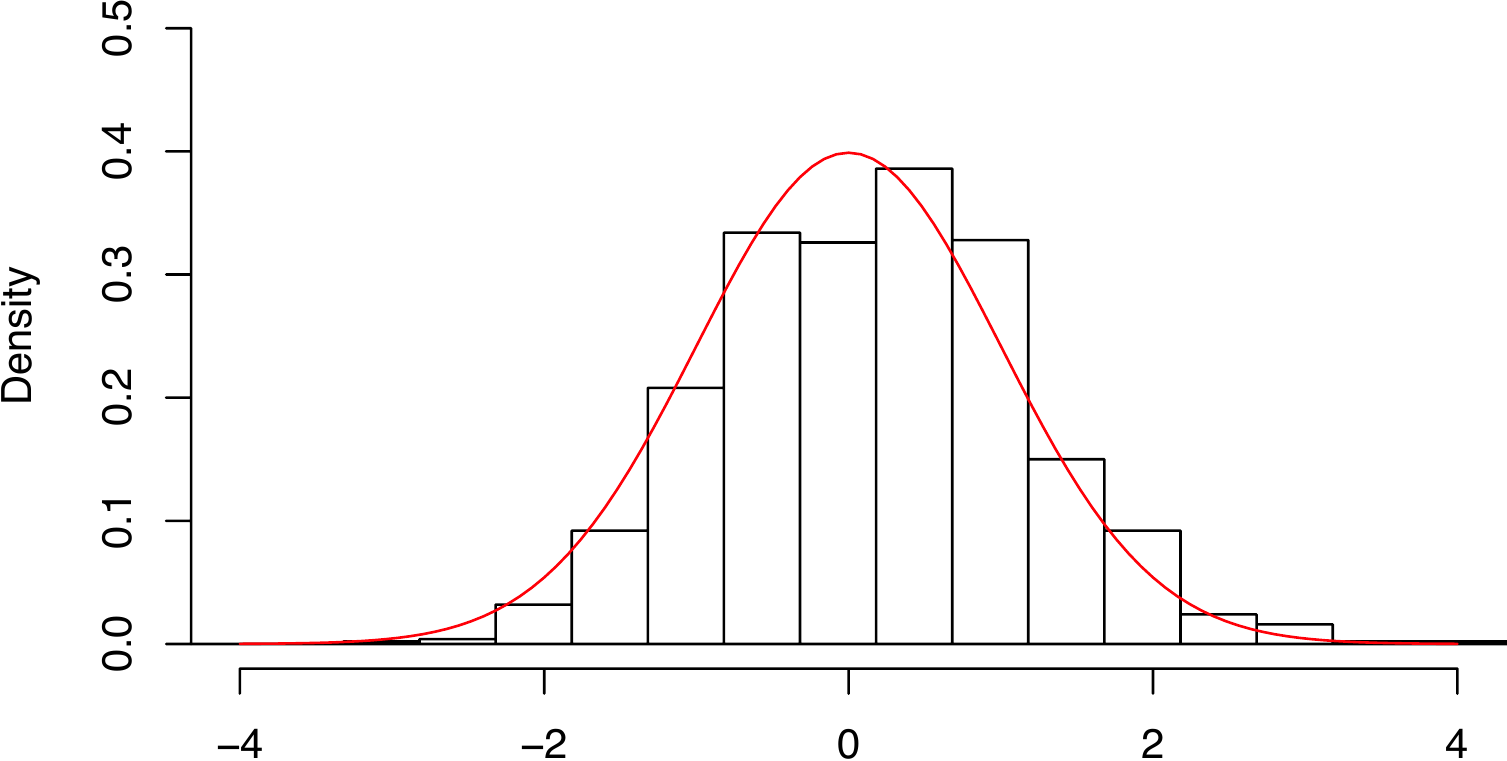}
\end{center}
\end{minipage}

\begin{minipage}{0.5 \hsize}
\begin{center}
\includegraphics[scale=0.49]{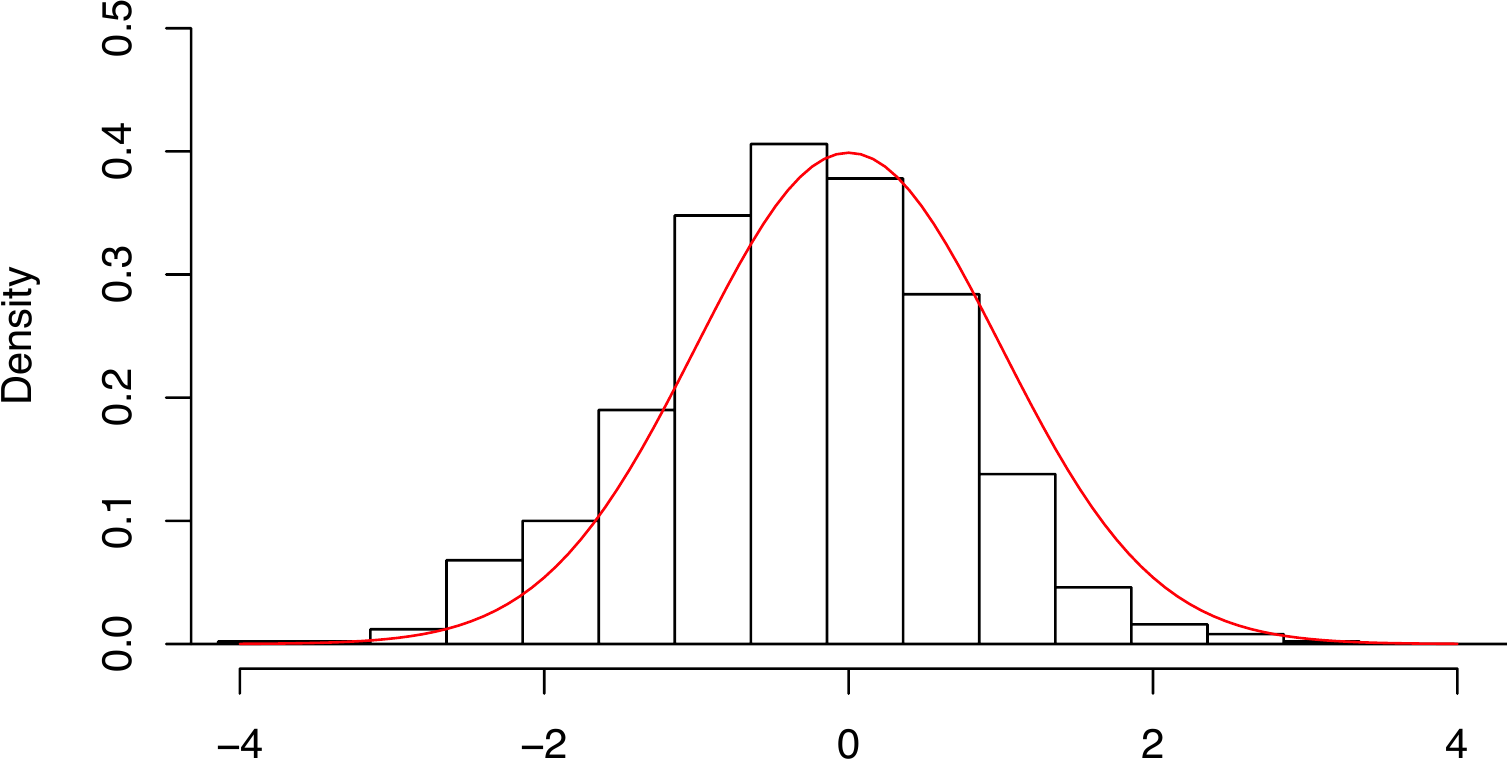}
\end{center}
\end{minipage}

\bigskip \\

\begin{minipage}{0.5 \hsize}
\begin{center}
\includegraphics[scale=0.49]{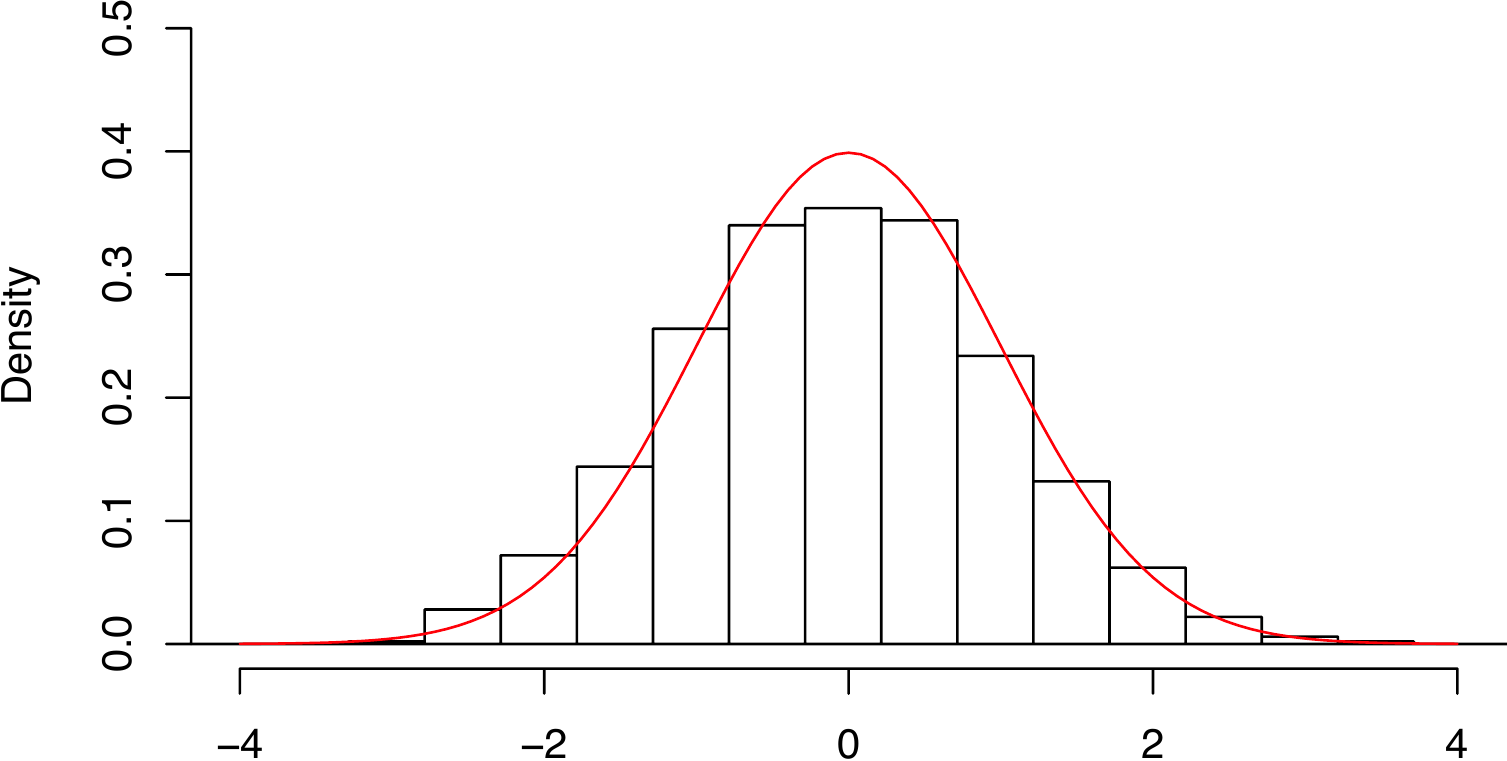}
\end{center}
\end{minipage}

\end{tabular}
\label{hist1}
\end{figure}


\begin{figure}[t]
\caption{Histograms of $\bar{u}^{\al}_{n}$ and $\bar{u}^{\beta}_{n}$ based on the joint estimation
(top left: $\bar{u}^{\al}_{1,n}$, top right: $\bar{u}^{\al}_{2,n}$, center left: $\bar{u}^{\al}_{3,n}$, center right: $\bar{u}^{\beta}_{1,n}$, bottom: $\bar{u}^{\beta}_{2,n}$).}
\begin{tabular}{c}
\ \\

\begin{minipage}{0.5 \hsize}
\begin{center}
\includegraphics[scale=0.49]{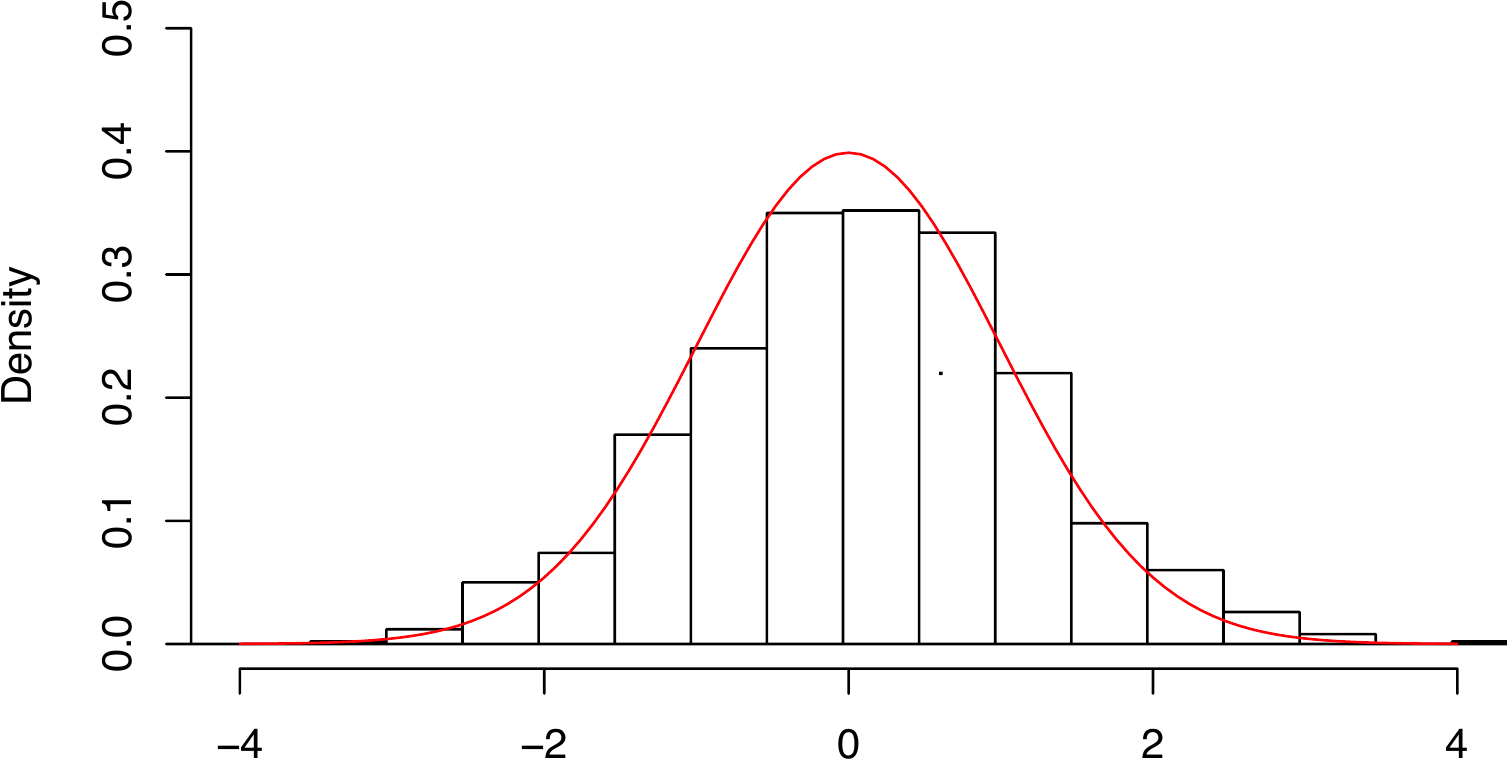}
\end{center}
\end{minipage}

\begin{minipage}{0.5 \hsize}
\begin{center}
\includegraphics[scale=0.49]{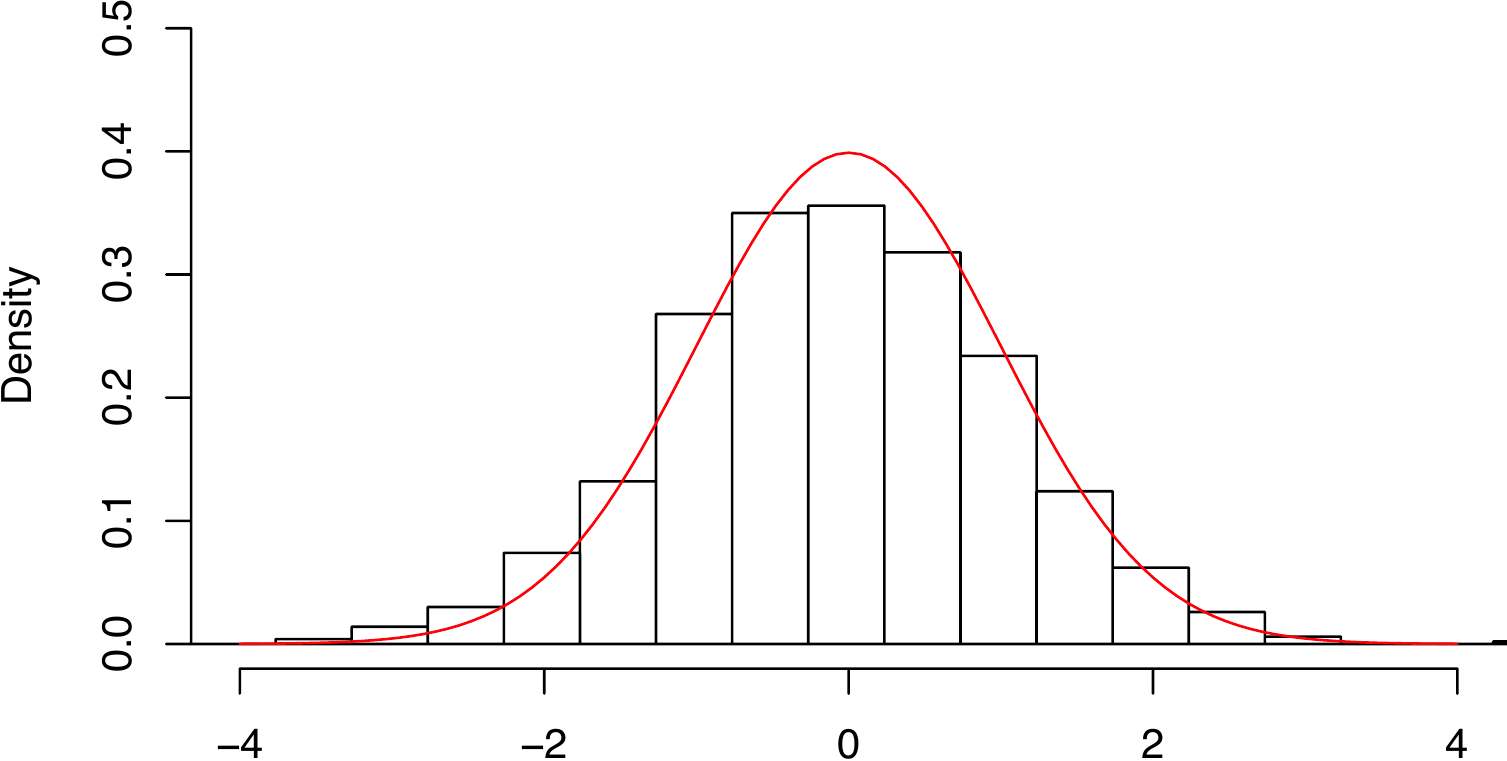}
\end{center}
\end{minipage}

\bigskip \\

\begin{minipage}{0.5 \hsize}
\begin{center}
\includegraphics[scale=0.49]{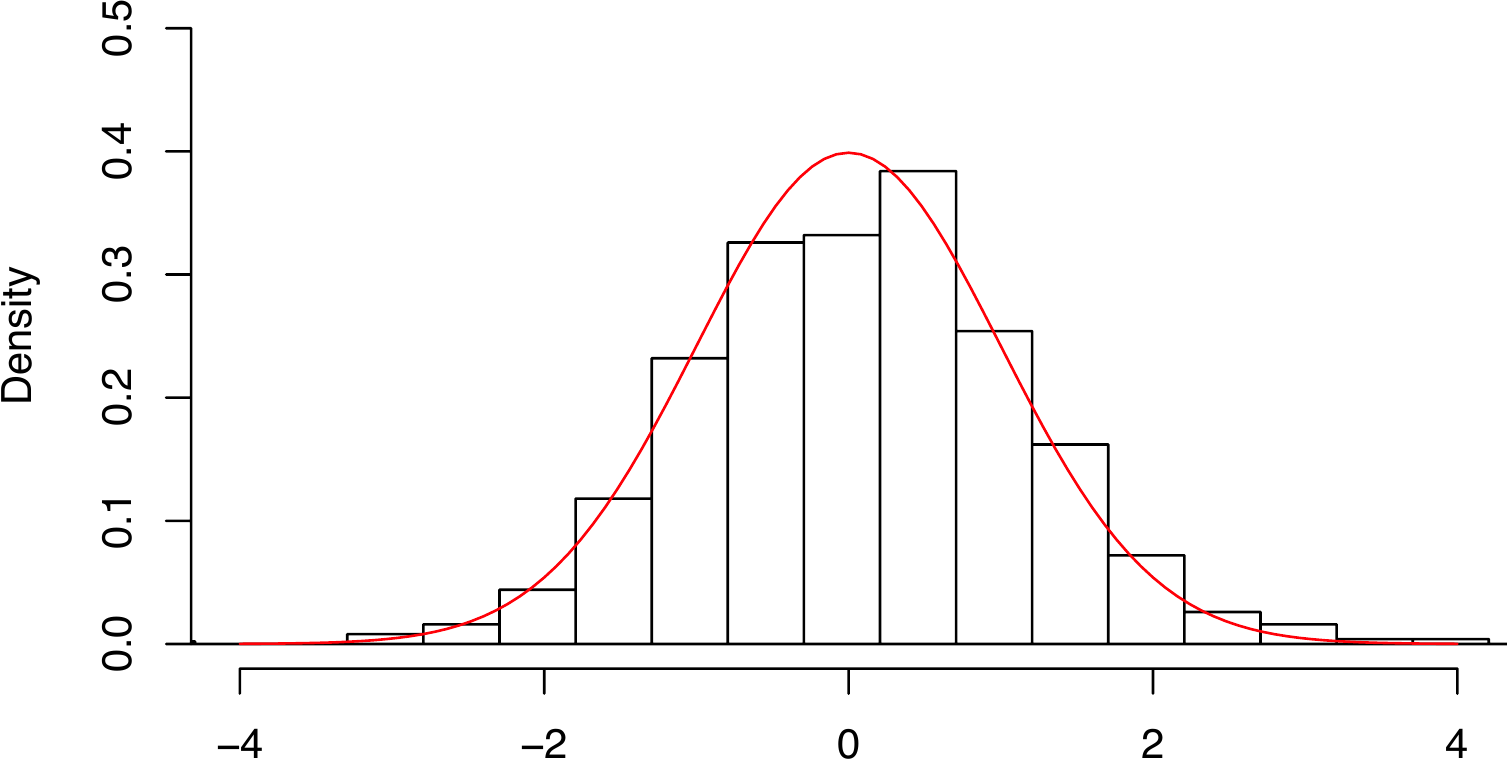}
\end{center}
\end{minipage}

\begin{minipage}{0.5 \hsize}
\begin{center}
\includegraphics[scale=0.49]{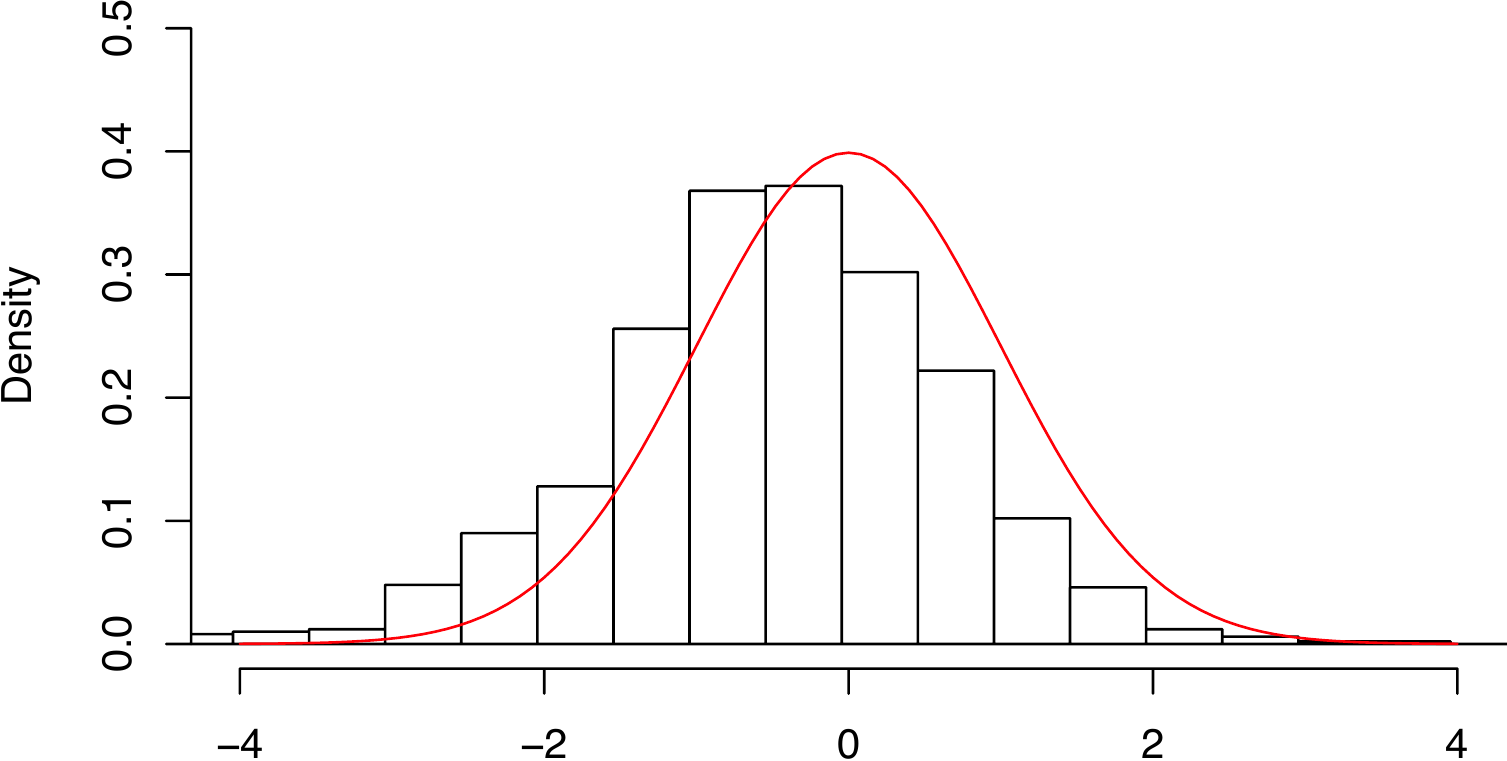}
\end{center}
\end{minipage}

\bigskip \\

\begin{minipage}{0.5 \hsize}
\begin{center}
\includegraphics[scale=0.49]{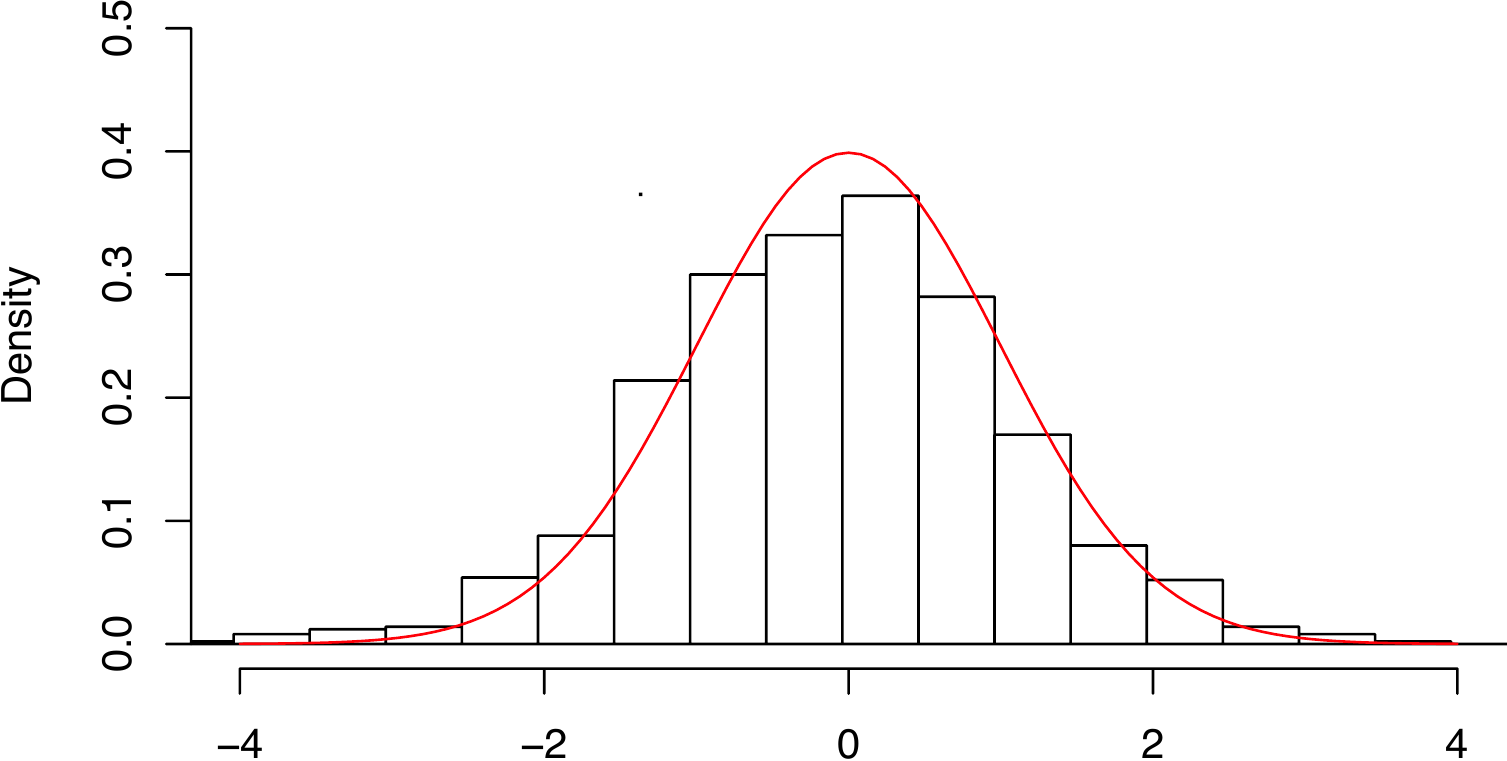}
\end{center}
\end{minipage}

\end{tabular}
\label{hist2}
\end{figure}


\begin{figure}[t]
\caption{Histograms of $\tilde{\epsilon}$ (left: two-step, right: joint).}
\begin{tabular}{c}
\ \\

\begin{minipage}{0.5 \hsize}
\begin{center}
\includegraphics[scale=0.49]{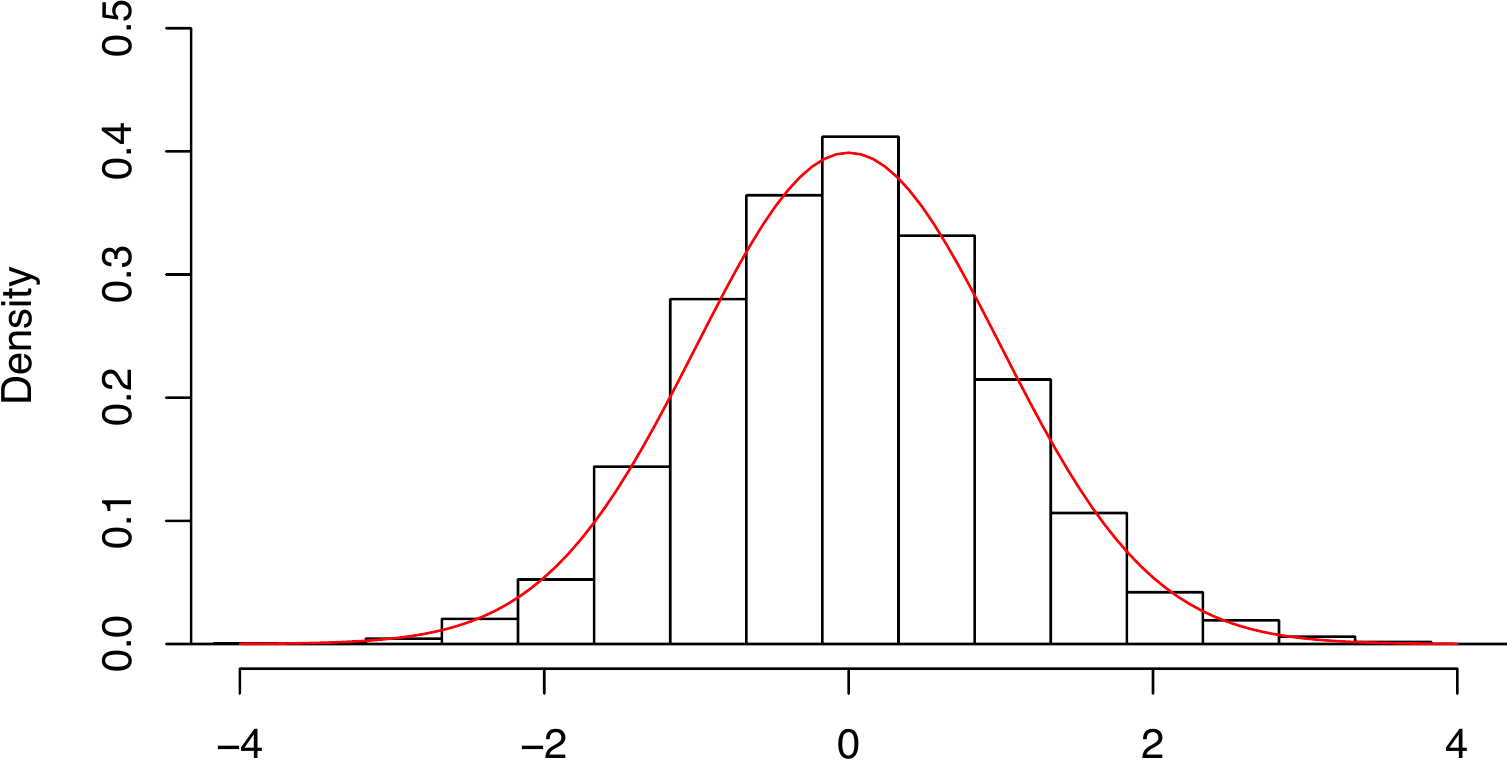}
\end{center}
\end{minipage}

\begin{minipage}{0.5 \hsize}
\begin{center}
\includegraphics[scale=0.49]{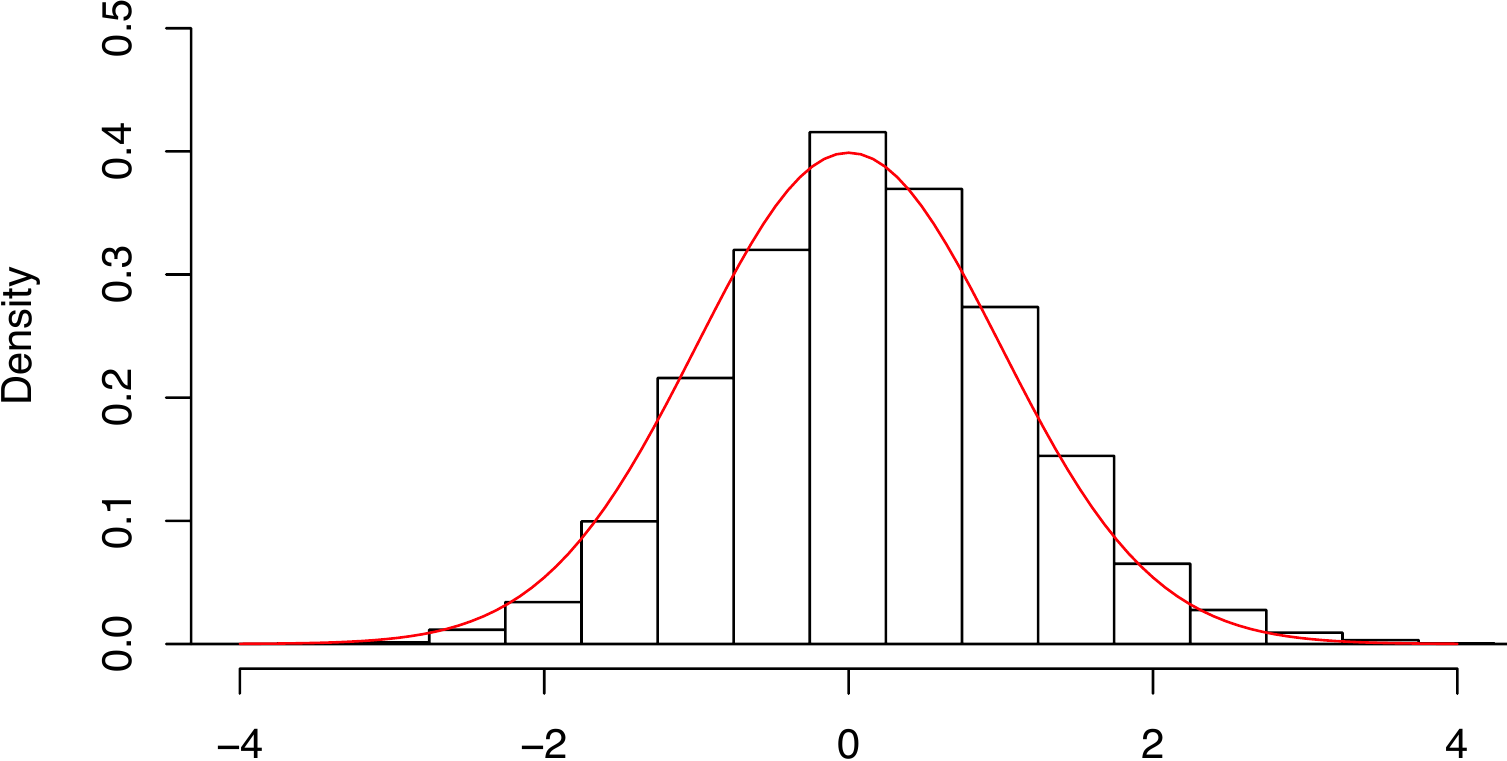}
\end{center}
\end{minipage}

\end{tabular}
\label{hist3}
\end{figure}

\subsection{Model selection} \label{Simu2}

We consider the following diffusion (Diff) and drift (Drif) coefficients:
\begin{align*}
&{\bf Diff}\;{\bf 1:}\exp\Big\{\frac{1}{2}(\alpha_{1}\cos X_{t}+\alpha_{2}\sin X_{t}+\alpha_{3}\cos X_{t}\sin X_{t})\Big\}; \\
&{\bf Diff}\;{\bf 2:}\exp\Big\{\frac{1}{2}(\alpha_{1}\cos X_{t}+\alpha_{2}\sin X_{t})\Big\}; 
\quad \;{\bf Diff}\;{\bf 3:}\exp\Big\{\frac{1}{2}(\alpha_{1}\cos X_{t}+\alpha_{3}\cos X_{t}\sin X_{t})\Big\}; \\
&{\bf Diff}\;{\bf 4:}\exp\Big\{\frac{1}{2}(\alpha_{2}\sin X_{t}+\alpha_{3}\cos X_{t}\sin X_{t})\Big\};
\quad \;{\bf Diff}\;{\bf 5:}\exp\Big\{\frac{1}{2}\alpha_{1}\cos X_{t}\Big\}; \\
&{\bf Diff}\;{\bf 6:}\exp\Big\{\frac{1}{2}\alpha_{2}\sin X_{t}\Big\};
\quad \;{\bf Diff}\;{\bf 7:}\exp\Big\{\frac{1}{2}\alpha_{3}\cos X_{t}\sin X_{t}\Big\},
\end{align*}
and
\begin{align*}
{\bf Drif}\;{\bf 1:}\;\beta_{1}X_{t}+\beta_{2};
\quad \;{\bf Drif}\;{\bf 2:}\;\beta_{1}X_{t};
\quad \;{\bf Drif}\;{\bf 3:}\;\beta_{2}.
\end{align*}
Each candidate model consists of a combination of diffusion and drift coefficients;
for example, in the case of Diff 1 and Drif 1, we consider the statistical model
\begin{align*}
dX_{t}=\exp\biggl\{\frac{1}{2}(\alpha_{1}\cos X_{t}+\alpha_{2}\sin X_{t}+\alpha_{3}\cos X_{t}\sin X_{t})\biggr\}dw_{t}+(\beta_{1}X_{t}+\beta_{2})dt.
\end{align*}
Then, the true model is given by Diff 4 and Drif 2.

In order to empirically quantify relative frequency (percentage) of the model selection,
using the joint m(Q)BIC and two-step m(QBIC) we computed $w_{m_{1},m_{2}}$ and $w_{m_{1},m_{2}}^{\prime}$ defined as follows:
\begin{align*}
w_{m_{1},m_{2}}&=\frac{\ds{\exp\Big\{-\frac{1}{2}\big(\mathrm{m(Q)BIC}_{n}^{(m_{1},m_{2})}-\mathrm{m(Q)BIC}_{n}^{(m_{1,n}^{\ast},m_{2,n}^{\ast})}\big)\Big\}}}{\ds{\sum_{k=1}^{7}\sum_{\ell=1}^{3}}\exp\Big\{-\frac{1}{2}\big(\mathrm{m(Q)BIC}_{n}^{(k,\ell)}-\mathrm{m(Q)BIC}_{n}^{(m_{1,n}^{\ast},m_{2,n}^{\ast})}\big)\Big\}}\times 100, \\
w_{m_{1},m_{2}}^{\prime}&=\frac{\ds{\exp\Big\{-\frac{1}{2}\big(\mathrm{m(Q)BIC}_{n}^{(m_{1})}-\mathrm{m(Q)BIC}_{n}^{(m_{1,n}^{\ast})}\big)\Big\}}}{\ds{\sum_{k=1}^{7}\exp\Big\{-\frac{1}{2}\big(\mathrm{m(Q)BIC}_{n}^{(k)}-\mathrm{m(Q)BIC}_{n}^{(m_{1,n}^{\ast})}\big)\Big\}}} \\
&\quad\times\frac{\ds{\exp\Big\{-\frac{1}{2}\big(\mathrm{m(Q)BIC}_{n}^{(m_{2}|m_{1,n})}-\mathrm{m(Q)BIC}_{n}^{(m_{2,n}^{\ast}|m_{1,n})}\big)\Big\}}}{\ds{\sum_{\ell=1}^{3}\exp\Big\{-\frac{1}{2}\big(\mathrm{m(Q)BIC}_{n}^{(\ell|m_{1,n})}-\mathrm{m(Q)BIC}_{n}^{(m_{2,n}^{\ast}|m_{1,n})}\big)\Big\}}}\times100.
\end{align*}
These ``model weights'' (\cite[Section 6.4.5]{BurAnd02}) are not only numerically stable but also practically convenient,
for one can quantify frequency of relative model evidences among the candidate models from single data set (one sample path).
The model which has the highest $w_{m_{1},m_{2}}$($w_{m_{1},m_{2}}^{\prime}$) value is the most probable model.
Because of the definition, $w_{m_{1},m_{2}}$ and $w_{m_{1},m_{2}}^{\prime}$ satisfy the equation $\sum_{k=1}^{7}\sum_{\ell=1}^{3}w_{k,\ell}=\sum_{k=1}^{7}\sum_{\ell=1}^{3}w_{k,\ell}^{\prime}=100$.

Tables \ref{ms.joint} and \ref{ms.tw} summarize the empirical means of $w_{m_{1},m_{2}}$ and $w_{m_{1},m_{2}}^{\prime}$ and also model-selection frequencies, all computed from $1000$ independent data sets.
The indicators of the true model defined by Diff 4 and Drif 2 are given by $w_{4,2}$ and $w_{4,2}^{\prime}$.
The values of $w_{4,2}$ and $w_{4,2}^{\prime}$ are the highest for all $n$ and become larger as $n$ increases.
Also observed is that $w_{4,2}$ takes higher values than $w_{4,2}^{\prime}$.
Moreover, $w_{m_{1},m_{2}}^{\prime}$ gets close to $w_{m_{1},m_{2}}$ as $n$ increases.

\medskip

\begin{rem}{\rm
Instead of \eqref{se:simu.true}, we also run the same code for the models
\begin{align}
dX_{t}&=\sqrt{\tau}\exp \biggl\{\frac{1}{2}(2\sin X_{t}-\cos X_{t}\sin X_{t}) \biggr\}dw_{t}-\tau X_{t}dt
\label{se:simu.tauall}
\end{align}
and
\begin{align}
dX_{t}&=\sqrt{\tau}\exp \biggl\{\frac{1}{2}(2\sin X_{t}-\cos X_{t}\sin X_{t}) \biggr\}dw_{t}-X_{t}dt
\label{se:simu.taudiff}
\end{align}
with $\tau=2,3$. In the unreported simulation results, we could observe the following: 
in the model \eqref{se:simu.tauall}, similar tendencies were observed for $\tilde{\theta}_{n}$, $\tilde{h}/\tau h_{0}$, and model selection;
in the model \eqref{se:simu.taudiff}, estimation performance of $\tilde{\beta}_{n}$ was inferior.
Both are in accordance with our theoretical findings (see Remark \ref{se:rem.taudiff} for details).
}\qed
\end{rem}

\begin{table}[t]
\begin{center}
\caption{\footnotesize The mean of the model weights $w_{m_{1},m_{2}}$ and model selection frequencies. The true model consists of Diff 4 and Drif 2.}
\begin{tabular}{l l l r r r r r r r} \hline
 & Criteria & & Diff 1 & Diff 2 & Diff 3 & Diff $4^{\ast}$ & Diff 5 & Diff 6 & Diff 7 \\ \hline
 & & & \multicolumn{7}{l}{$n=1000$} \\ \cline{4-10}
Drif 1 & mBIC & weight & 1.34 & 3.12 & 0.35 & 20.06 & 0.08 & 11.24 & 0.21 \\
 & & frequency & 0 & 15 & 1 & 88 & 1 & 89 & 0 \\
 & mQBIC & weight & 8.36 & 3.00 & 0.73 & 23.85 & 0.04 & 3.60 & 0.11 \\
 & & frequency & 20 & 16 & 10 & 169 & 0 & 22 & 0 \\[5pt]
Drif $2^{\ast}$ & mBIC & weight & 2.45 & 5.25 & 0.09 & {\bf 35.19} & 0.00 &15.32 & 0.19 \\
 & & frequency & 4 & 43 & 0 & {\bf 525} & 0 & 230 & 1 \\
 & mQBIC & weight & 11.96 & 4.05 & 0.12 & {\bf 36.32} & 0.00 &4.21 & 0.08 \\
  & & frequency & 51 & 28 & 0 & {\bf 631} & 0 & 52 & 0 \\[5pt]
Drif 3 & mBIC & weight & 0.21 & 0.69 & 0.01 & 3.07 & 0.00 & 1.13 & 0.00 \\
 & & frequency & 0 & 0 & 0 & 3 & 0 & 0 & 0 \\
 & mQBIC & weight & 0.83 & 0.32 & 0.00 & 2.22 & 0.00 & 0.20 & 0.00 \\
 & & frequency & 0 & 0 & 0 & 1 & 0 & 0 & 0 \\[8pt]
 & & & \multicolumn{7}{l}{$n=3000$} \\ \cline{4-10}
Drif 1 & mBIC & weight & 0.91 & 0.61 & 0.00 & 26.70 & 0.00 & 3.32 & 0.00 \\
 & & frequency & 1 & 1 & 0 & 101 & 0 & 26 & 0 \\
 & mQBIC & weight & 4.66 & 0.67 & 0.00 & 26.84 & 0.00 & 0.82 & 0.00 \\
 & & frequency & 10 & 3 & 0 & 136 & 0 & 4 & 0 \\[5pt]
Drif $2^{\ast}$ & mBIC & weight & 2.40 & 1.15 & 0.00 & {\bf 58.17} & 0.00 & 3.91 & 0.00 \\
 & & frequency & 3 & 9 & 0 & {\bf 812} & 0 & 47 & 0 \\
 & mQBIC & weight & 10.76 & 0.89 & 0.00 & {\bf 52.42} & 0.00 & 0.90 & 0.00 \\
 & & frequency & 31 & 9 & 0 & {\bf 801} & 0 & 6 & 0 \\[5pt]
Drif 3 & mBIC & weight & 0.11 & 0.04 & 0.00 & 2.59 & 0.00 & 0.08 & 0.00 \\
 & & frequency & 0 & 0 & 0 & 0 & 0 & 0 & 0 \\
 & mQBIC & weight & 0.36 & 0.01 & 0.00 & 1.64 & 0.00 & 0.01 & 0.00 \\
 & & frequency & 0 & 0 & 0 & 0 & 0 & 0 & 0 \\[8pt]
  & & & \multicolumn{7}{l}{$n=5000$} \\ \cline{4-10}
Drif 1 & mBIC & weight & 0.66 & 0.29 & 0.00 & 27.46 & 0.00 & 1.50 & 0.00 \\
 & & frequency & 0 & 2 & 0 & 102 & 0 & 11 & 0 \\
 & mQBIC & weight & 3.73 & 0.30 & 0.00 & 26.97 & 0.00 & 0.32 & 0.00 \\
 & & frequency & 4 & 3 & 0 & 125 & 0 & 1 & 0 \\[5pt]
Drif $2^{\ast}$ & mBIC & weight & 2.04 & 0.31 & 0.00 & {\bf 64.27} & 0.00 & 1.72 & 0.00 \\
 & & frequency & 0 & 1 & 0 & {\bf 862} & 0 & 21 & 0 \\
 & mQBIC & weight & 9.31 & 0.22 & 0.00 & {\bf 57.53} & 0.00 & 0.36 & 0.00 \\
 & & frequency & 24 & 0 & 0 & {\bf 839} & 0 & 4 & 0 \\[5pt]
Drif 3 & mBIC & weight & 0.05 & 0.00 & 0.00 & 1.68 & 0.00 & 0.01 & 0.00 \\
 & & frequency & 0 & 0 & 0 & 1 & 0 & 0 & 0 \\
 & mQBIC & weight & 0.17 & 0.00 & 0.00 & 1.08 & 0.00 & 0.00 & 0.00 \\
 & & frequency & 0 & 0 & 0 & 0 & 0 & 0 & 0 \\ \hline
\end{tabular}
\label{ms.joint}
\end{center}
\end{table}

\begin{table}[t]
\begin{center}
\caption{\footnotesize The mean of the model weights $w_{m_{1},m_{2}}^{\prime}$ and model selection frequencies. The true model consists of Diff 4 and Drif 2.}
\begin{tabular}{l l l r r r r r r r} \hline
& Criteria & & Diff 1 & Diff 2 & Diff 3 & Diff $4^{\ast}$ & Diff 5 & Diff 6 & Diff 7 \\ \hline
 & & & \multicolumn{7}{l}{$n=1000$} \\ \cline{4-10}
Drif 1 & mBIC & weight & 1.27 & 4.14 & 0.21 & 18.57 & 0.00 & 11.29 & 0.21 \\
 & & frequency & 0 & 14 & 0 & 81 & 0 & 94 & 0 \\
 & mQBIC & weight & 9.01 & 3.83 & 0.26 & 21.37 & 0.00 & 3.47 & 0.09 \\
 & & frequency & 18 & 16 & 1 & 149 & 0 & 27 & 00 \\[5pt]
Drif $2^{\ast}$ & mBIC & weight & 2.43 & 8.58 & 0.19 & {\bf 32.68} & 0.00 & 14.79 & 0.18 \\
 & & frequency & 4 & 95 & 0 & {\bf 489} & 0 & 220 & 0 \\
 & mQBIC & weight & 13.91 & 7.17 & 0.28 & {\bf 32.86} & 0.00 & 3.83 & 0.07 \\
 & & frequency & 66 & 74 & 0 & {\bf 595} & 0 & 53 & 0 \\[5pt]
Drif 3 & mBIC & weight & 0.22 & 1.09 & 0.03 & 2.98 & 0.00 & 1.15 & 0.00 \\
 & & frequency & 0 & 0 & 0 & 3 & 0 & 0 & 0 \\
 & mQBIC & weight & 0.94 & 0.52 & 0.01 & 2.15 & 0.00 & 0.21 & 0.00 \\
 & & frequency & 0 & 0 & 0 & 1 & 0 & 0 & 0 \\[8pt]
  & & & \multicolumn{7}{l}{$n=3000$} \\ \cline{4-10}
Drif 1 & mBIC & weight & 1.27 & 0.78 & 0.00 & 26.01 & 0.00 & 3.62 & 0.00 \\
 & & frequency & 2 & 1 & 0 & 95 & 0 & 31 & 0 \\
 & mQBIC & weight & 6.52 & 0.74 & 0.00 & 25.39 & 0.00 & 0.90 & 0.00 \\
 & & frequency & 9 & 2 & 0 & 131 & 0 & 6 & 0 \\[5pt]
Drif $2^{\ast}$ & mBIC & weight & 2.70 & 1.81 & 0.00 & {\bf 57.05} & 0.00 & 3.90 & 0.00 \\
 & & frequency & 6 & 15 & 0 & {\bf 803} & 0 & 47 & 0 \\
 & mQBIC & weight & 11.97 & 1.39 & 0.00 & {\bf 50.18} & 0.00 & 0.87 & 0.00 \\
 & & frequency & 44 & 15 & 0 & {\bf 787} & 0 & 6 & 0 \\[5pt]
Drif 3 & mBIC & weight & 0.12 & 0.07 & 0.00 & 2.57 & 0.00 & 0.09 & 0.00 \\
 & & frequency & 0 & 0 & 0 & 0 & 0 & 0 & 0 \\
 & mQBIC & weight & 0.40 & 0.02 & 0.00 & 1.60 & 0.00 & 0.01 & 0.00 \\
 & & frequency & 0 & 0 & 0 & 0 & 0 & 0 & 0 \\[8pt]
  & & & \multicolumn{7}{l}{$n=5000$} \\ \cline{4-10}
Drif 1 & mBIC & weight & 1.02 & 0.24 & 0.00 & 27.04 & 0.00 & 1.71 & 0.00 \\
 & & frequency & 1 & 0 & 0 & 97 & 0 & 16 & 0 \\
 & mQBIC & weight & 5.08 & 0.25 & 0.00 & 26.09 & 0.00 & 0.36 & 0.00 \\
 & & frequency & 5 & 1 & 0 & 124 & 0 & 2 & 0 \\[5pt]
Drif $2^{\ast}$ & mBIC & weight & 2.47 & 0.43 & 0.00 & {\bf 63.58} & 0.00 & 1.77 & 0.00 \\
 & & frequency & 6 & 2 & 0 & {\bf 854} & 0 & 23 & 0 \\
 & mQBIC & weight & 10.39 & 0.30 & 0.00 & {\bf 55.91} & 0.00 & 0.35 & 0.00 \\
 & & frequency & 34 & 0 & 0 & {\bf 830} & 0 & 4 & 0 \\[5pt]
Drif 3 & mBIC & weight & 0.05 & 0.01 & 0.00 & 1.68 & 0.00 & 0.02 & 0.00 \\
 & & frequency & 0 & 0 & 0 & 1 & 0 & 0 & 0 \\
 & mQBIC & weight & 0.18 & 0.00 & 0.00 & 1.07 & 0.00 & 0.00 & 0.00 \\ 
 & & frequency & 0 & 0 & 0 & 0 & 0 & 0 & 0 \\ \hline
\end{tabular}
\label{ms.tw}
\end{center}
\end{table}

\section{Proofs}\label{hm:sec_proofs}

\subsection{Preliminaries}
\label{hm:sec_proofs_preliminaries}

We begin with some preliminaries, most of which will be repeatedly used in the sequel, often without mention.
We will denote by $O_{p}^{\ast}$ and $o_{p}^{\ast}$ the stochastic order symbols which are valid uniformly in $\theta$,
and by $\E_{j-1}(\cdot)$ the conditional expectation with respect to the $\sig$-field $\mcf_{t_{j-1}}:=\sig(X_{0})\vee\sig(w_{s};\, s\le t_{j-1})$.
Then $X$ is $(\mcf_{t})$-adapted since we are considering a strong solution to \eqref{hm:sde1}.

Assumption \ref{Ass1} ensures that
\begin{equation}
\max_{i=0,1,2,3}\sup_{\alpha\in\Theta_{\alpha}}\big\|\p_{\alpha}^{i}S^{-1}(x,\alpha)\big\|\lesssim (1+\|x\|)^{C_{0}'}
\nonumber
\end{equation}
for some $C_{0}'\ge 0$.
For any measurable function $f$ on $\mbbr^{d}\times\Theta$ such that 
\begin{equation}
\sup_{\theta}\max_{k=0,1}\|\p^{k}_{x}f(x,\theta)\| \lesssim 1+\|x\|^{C}, 
\label{se:fpg}
\end{equation}
we have
\begin{equation}
\frac{1}{n}\sumj f_{j-1}(\theta) - \int f(x,\theta)\pi(dx) = O_{p}^{\ast}(\sqrt{h_{0}})
\label{se:aux.2}
\end{equation}
by using the basic fact
\begin{equation}
\E(\|X_{t}-X_{s}\|^{q})\lesssim |t-s|^{q/2}
\nonumber
\end{equation}
for $q\ge 2$.
Given an $f$ satisfying \eqref{se:fpg} and taking values in $d\times d$ positive definite matrices, we can make use of the fact
\begin{align*}
\frac{1}{h_{0}^{2}}\bigg(\left\|\E_{j-1}(\D_{j}X)-\tau h_{0} b_{j-1}(\tz)\right\| 
+ \left\|\E_{j-1}\{(\D_{j}X)^{\otimes 2}\}-\tau h_{0} S_{j-1}(\al_{0})\right\|\bigg) \lesssim 1+\|X_{t_{j-1}}\|^{C},
\end{align*}
with the aid of the Sobolev inequality to deduce that
\begin{align}
&\sup_{\theta}\bigg|\frac{1}{\sqrt{n}}\sumj f_{j-1}(\theta)\big[\D_{j}X,b_{j-1}(\theta)\big]\bigg| \nn\\
&\leq\sup_{\theta}\sqrt{h_{0}}\bigg|\frac{1}{\sqrt{nh_{0}}}\sumj f_{j-1}(\theta)\big[\D_{j}X-\mbbe_{j-1}\big(\D_{j}X\big),b_{j-1}(\theta)\big]\bigg| \nn\\
&{}\qquad+\sup_{\theta}\bigg|\frac{1}{\sqrt{n}}\sumj f_{j-1}(\theta)\big[\mbbe_{j-1}\big(\D_{j}X\big)-\tau h_{0} b_{j-1}(\tz),b_{j-1}(\theta)\big]\bigg| \nn\\
&{}\qquad+\sup_{\theta}\bigg|\frac{1}{\sqrt{n}}\sumj f_{j-1}(\theta)\big[\tau h_{0} b_{j-1}(\tz),b_{j-1}(\theta)\big]\bigg| \nn\\
&=O_{p}(\sqrt{h_{0}})+O_{p}\big(\sqrt{n}h_{0}^{2}\big)+O_{p}\big(\sqrt{n}h_{0}\big), \label{se:aux.4}
\end{align}
and that
\begin{align}
&\sup_{\theta}\bigg|\frac{1}{nh_{0}}\sumj f_{j-1}(\theta)\big[(\D_{j}X)^{\otimes2}\big]-\frac{\tau}{n}\sumj f_{j-1}(\theta)\big[S_{j-1}(\alpha_{0})\big]\bigg| \nn\\
&\leq\sup_{\theta}\frac{1}{\sqrt{n}}\bigg|\frac{1}{\sqrt{n}h_{0}}\sumj f_{j-1}(\theta)\big[(\D_{j}X)^{\otimes2}-\mbbe_{j-1}\big\{(\D_{j}X)^{\otimes2}\big\}\big]\bigg| \nn\\
&{}\qquad+\sup_{\theta}\bigg|\frac{1}{nh_{0}}\sumj f_{j-1}(\theta)\big[\mbbe_{j-1}\big\{(\D_{j}X)^{\otimes2}\big\}-\tau h_{0} S_{j-1}(\alpha_{0})\big]\bigg| \nn\\
&=O_{p}\big(n^{-1/2}\big)+O_{p}(h_{0}). \label{se:aux.5}
\end{align}
Because of \eqref{se:aux.2} to \eqref{se:aux.5}, we have under \eqref{hm:sampling.design},
\begin{align}
\frac{1}{\sqrt{n}}\sumj f_{j-1}(\theta)\big[\D_{j}X,b_{j-1}(\theta)\big] &= O_{p}^{\ast}(\sqrt{n}h_{0}), \label{hm:aux.1} \\
\frac{1}{nh_{0}}\sumj f_{j-1}(\theta)\big[(\D_{j}X)^{\otimes2}\big] 
&= \frac{\tau}{n}\sumj f_{j-1}(\theta)[S_{j-1}(\al_{0})] + O_{p}^{\ast}(n^{-1/2}) \nn\\
&= \tau\int_{\mbbr^{d}}\tr\left\{f(x,\theta)S(x,\alpha_{0})\right\}\pi(dx) + O_{p}^{\ast}(\sqrt{h_{0}}). \label{se:aux.3}
\end{align}
We refer to \cite{Mas13} or \cite{Yos11} for details of the above facts.

\subsection{Proof of Theorem \ref{hm:thm2.diffusion.an}} \label{proof.AN}

In this proof we will only consider the case where the constant $C_{0}$ in Assumption \ref{Ass1}(ii) is positive;
then, we see from Fatou's lemma that $\int |x|^{q}\pi(dx)<\infty$ for any $q>0$, so that the law of large numbers \eqref{hm:disc.LLN} is in force for any $g$ of at most polynomial growth.
The proof for $C_{0}=0$ is entirely analogous and is easier: in this case, it will be enough to consider bounded $g$.

\subsubsection{Consistency} \label{hm:proof.consistency}

Let
\begin{align*}
\tilde{\mbby}_{0}^{1}(\alpha)&:=-\frac{1}{2}\left\{d\log\left(\frac{1}{d}\int_{\mbbr^{d}}\tr\big(S^{-1}(x,\alpha)S(x,\alpha_{0})\big)\pi(dx)\right) 
-\int_{\mbbr^{d}}\log\big|S^{-1}(x,\alpha)S(x,\alpha_{0})\big|\pi(dx)\right\}, \\
\tilde{\mbby}_{0}^{2}(\beta)&:=-\frac{\tau}{2}\int_{\mbbr^{d}}S^{-1}(x,\alpha_{0})\big[\big(b(x,\alpha_{0},\beta)-b(x,\tz)\big)^{\otimes2}\big]\pi(dx). 
\end{align*}
These quantities serve as quasi-entropies for estimating $\al$ and $\beta$, hence should appropriately separate the models.

\medskip

\noindent
\textit{Proof of $\tilde{\alpha}_{n}\cip\alpha_{0}$.}
Let
\begin{equation}
\tilde{\mbby}_{n}^{1}(\alpha,\beta) := \frac{1}{n}\left(\tilde{\mbbh}_{n}(\alpha,\beta)-\tilde{\mbbh}_{n}(\alpha_{0},\beta)\right)
\nonumber
\end{equation}
It suffices to deduce that
\begin{align}
& \argmax_{\alpha\in\overline{\Theta}_{\alpha}}\tilde{\mbby}_{0}^{1}(\alpha)=\{\alpha_{0}\}, \label{hm:con.al.1} \\
&  \big| \tilde{\mbby}_{n}^{1}(\alpha,\beta) - \tilde{\mbby}_{0}^{1}(\alpha) \big| =o_{p}^{\ast}(1).
\label{hm:con.al.2}
\end{align}
Indeed, the argmax theorem (see for example \cite{van98}) then concludes the consistency of $\aet$ since $\aet\in\argmax_{\al}\tilde{\mbby}_{n}^{1}(\alpha,\bet)$ and \eqref{hm:con.al.2} implies that $|\tilde{\mbby}_{n}^{1}(\alpha,\bet)-\tilde{\mbby}_{0}^{1}(\alpha) | =o_{p}^{\ast}(1)$.

Let $\lambda_{1}(x,\alpha),\ldots,\lambda_{d}(x,\alpha)$ denote the eigenvalues of $S^{-1}(x,\alpha)S(x,\alpha_{0})$.
By means of the arithmetic-geometric mean inequality and Jensen's inequalities, we see that for every $\alpha$,
\begin{align*}
\tilde{\mbby}_{0}^{1}(\alpha)&=\frac{1}{2}\bigg\{\int_{\mbbr^{d}}\log\big|S^{-1}(x,\alpha)S(x,\alpha_{0})\big|\pi(dx)-d\log\bigg(\frac{1}{d}\int_{\mbbr^{d}}\tr\bigg(S^{-1}(x,\alpha)S(x,\alpha_{0})\bigg)\pi(dx)\bigg)\bigg\} \\
&\leq\frac{1}{2}\bigg[\int_{\mbbr^{d}}\log\bigg(\prod_{i=1}^{d}\lambda_{i}(x,\alpha)\bigg)\pi(dx)-d\log\bigg\{\int_{\mbbr^{d}}\bigg(\prod_{i=1}^{d}\lambda_{i}(x,\alpha)\bigg)^{1/d}\pi(dx)\bigg\}\bigg] \\
&\leq\frac{1}{2}\bigg[\int_{\mbbr^{d}}\log\bigg(\prod_{i=1}^{d}\lambda_{i}(x,\alpha)\bigg)\pi(dx)-\bigg\{\int_{\mbbr^{d}}\log\bigg(\prod_{i=1}^{d}\lambda_{i}(x,\alpha)\bigg)\pi(dx)\bigg\}\bigg] \\
&=0.
\end{align*}
It follows that the following conditions are equivalent:
\begin{itemize}
\item $\tilde{\mbby}_{0}^{1}(\alpha)=0$;
\item The eigenvalues $\lambda_{1}(x,\alpha),\ldots,\lambda_{d}(x,\alpha)$ are constant as a function of $x$, and moreover they are all equal.
\end{itemize}
Under Assumption \ref{Ass3}(i) the equality $\tilde{\mbby}_{0}^{1}(\alpha)=0$ holds only when $\al=\al_{0}$, hence we obtain \eqref{hm:con.al.1}.

Under Assumption \ref{Ass1} we have
\begin{equation}
(1+\|x\|)^{-C} \lesssim \inf_{\al}\lam_{\min}\{S(x,\al)\} \le \sup_{\al}\lam_{\max}\{S(x,\al)\} \lesssim (1+\|x\|)^{C}.
\nonumber
\end{equation}
Hence, the arithmetic-geometric mean inequality gives
\begin{align}
\inf_{\alpha}\int \tr\big(S^{-1}(x,\alpha)S(x,\alpha_{0})\big)\pi(dx)
&\ge \inf_{\alpha}d \int \left[\big|S^{-1}(x,\al)S(x,\al_{0})\big|\right]^{1/d}\pi(dx) \nn\\
&\ge d \int \lam_{\min}\{S(x,\al_{0})\} \left(\inf_{\alpha}\lam_{\min}\{S^{-1}(x,\al)\}\right)\pi(dx) \nn\\
&\gtrsim \int \lam_{\min}\{S(x,\al_{0})\}\left(\sup_{\alpha}\lam_{\max}\{S(x,\al)\}\right)^{-1}\pi(dx) \nn\\
&\gtrsim \int (1+\|x\|)^{-C}\pi(dx) > 0.
\nonumber
\end{align}
Then, it follows from the definition \eqref{hm:mGQMLE.def} that 
\begin{align}
\tilde{\mbby}_{n}^{1}(\alpha,\beta)
&= O_{p}^{\ast}(h_{0})
-\frac{1}{2}\bigg[-\frac{1}{n}\sumj\log\big|S_{j-1}^{-1}(\alpha)S_{j-1}(\alpha_{0})\big| \nn\\
&{}\qquad +d\bigg\{
\log\bigg(\frac{1}{nh_{0}d}\sumj S_{j-1}^{-1}(\alpha)\big[(\D_{j}X)^{\otimes2}\big]\bigg)
-\log\bigg(\frac{1}{nh_{0}d}\sumj S_{j-1}^{-1}(\alpha_{0})\big[(\D_{j}X)^{\otimes2}\big]\bigg)\bigg\}\bigg]
\nn\\
&=o_{p}^{\ast}(1) + \frac{1}{2}\bigg[ \int_{\mbbr^{d}}\log\big|S^{-1}(x,\alpha)S(x,\alpha_{0})\big|\pi(dx) \nn\\
&{}\qquad -d\bigg\{
\log\bigg( \frac{\tau}{d}\int_{\mbbr^{d}}\tr\big(S^{-1}(x,\al)S(x,\alpha_{0})\big)\pi(dx) + o_{p}^{\ast}(1) \bigg)
-\log\left( \tau + o_{p}^{\ast}(1) \right)
\bigg\}\bigg]
\nn\\
&=o_{p}^{\ast}(1) + \frac{1}{2}\bigg[ \int_{\mbbr^{d}}\log\big|S^{-1}(x,\alpha)S(x,\alpha_{0})\big|\pi(dx) \nn\\
&{}\qquad -d
\log\bigg( \frac{1}{d}\int_{\mbbr^{d}}\tr\big(S^{-1}(x,\al)S(x,\alpha_{0})\big)\pi(dx) \bigg) + o_{p}^{\ast}(1) \bigg] \nn\\
&=\tilde{\mbby}_{0}^{1}(\alpha)+o_{p}^{\ast}(1).
\nn
\end{align}
Thus \eqref{hm:con.al.2} is verified, concluding the consistency of $\aet$.

\medskip

\noindent
\textit{Proof of $\tilde{\beta}_{n}\cip\beta_{0}$.}
Let
\begin{align*}
\tilde{\mbby}_{n}^{2}(\beta;\tilde{\alpha}_{n}) &:=\frac{1}{nh_{0}}\left(\tilde{\mbbh}_{n}(\tilde{\alpha}_{n},\beta)-\tilde{\mbbh}_{n}(\tilde{\alpha}_{n},\beta_{0})\right) \\
&=\frac{1}{nh_{0}}\sumj S_{j-1}^{-1}(\tilde{\alpha}_{n})\big[\D_{j}X,b_{j-1}(\tilde{\alpha}_{n},\beta)-b_{j-1}(\tilde{\alpha}_{n},\beta_{0})\big] \\
&\quad-\frac{1}{2}\bigg(\frac{1}{h_{0}}\frac{1}{nd}\sumj S_{j-1}^{-1}(\tilde{\alpha}_{n})\big[(\D_{j}X)^{\otimes2}\big]\bigg)
\bigg(\frac{1}{n}\sumj S_{j-1}^{-1}(\tilde{\alpha}_{n})\big[b_{j-1}(\tilde{\alpha}_{n},\beta)^{\otimes2}-b_{j-1}(\tilde{\alpha}_{n},\beta_{0})^{\otimes2}\big]\bigg).
\end{align*}
Since $\{\beta_{0}\}=\argmax_{\beta}\tilde{\mbby}_{0}^{2}(\beta)$ by Assumption \ref{Ass3}(ii) and $\bet\in\argmax_{\beta}\tilde{\mbby}_{n}^{2}(\beta;\tilde{\alpha}_{n})$ it remains to show that $|\tilde{\mbby}_{n}^{2}(\beta;\tilde{\alpha}_{n})-\tilde{\mbby}_{0}^{2}(\beta)| = o_{p}^{\ast}(1)$.
By the consistency of $\aet$, we have $b_{j-1}(\tilde{\alpha}_{n},\beta)=b_{j-1}(\alpha_{0},\beta)+o_{p}^{\ast}(1)$ and
\begin{align}
\frac{1}{h_{0}}\frac{1}{nd}\sumj S_{j-1}^{-1}(\tilde{\alpha}_{n})\big[(\D_{j}X)^{\otimes2}\big]
=\frac{\tau}{d}\int_{\mbbr^{d}}\tr\left\{S^{-1}(x,\al_{0})S(x,\alpha_{0})\right\}\pi(dx) + o_{p}^{\ast}(1)
=\tau + o_{p}^{\ast}(1).
\nonumber
\end{align}
A Sobolev-inequality argument for the martingale term then yields the desired convergence:
\begin{align}
\tilde{\mbby}_{n}^{2}(\beta;\tilde{\alpha}_{n})
&=\frac{1}{\sqrt{nh_{0}}}\bigg(\frac{1}{\sqrt{nh_{0}}}\sumj S_{j-1}^{-1}(\tilde{\alpha}_{n})\big[\D_{j}X-\E_{j-1}(\D_{j}X),b_{j-1}(\alpha_{0},\beta)-b_{j-1}(\tz)\big]\bigg) \nn\\
&{}\qquad+\frac{\tau}{n}\sumj S_{j-1}^{-1}(\al_{0})\big[b_{j-1}(\tz),b_{j-1}(\alpha_{0},\beta)-b_{j-1}(\tz)\big] \nn\\
&{}\qquad-\frac{\tau}{2n}
\sumj S_{j-1}^{-1}(\al_{0})\big[b_{j-1}(\alpha_{0},\beta)^{\otimes2}-b_{j-1}(\tz)^{\otimes2}\big] + o_{p}^{\ast}(1)
\nn\\
&=-\frac{\tau}{2n}
\sumj S_{j-1}^{-1}(\al_{0})\big[\big\{b_{j-1}(\alpha_{0},\beta)-b_{j-1}(\tz)\big\}^{\otimes2}\big] + o_{p}^{\ast}(1)
\nn\\
&=\tilde{\mbby}_{0}^{2}(\beta) + o_{p}^{\ast}(1).
\label{hm:al-consis+2}
\end{align}

\subsubsection{Asymptotic normality} \label{proof.AN.an}
Prior to the proof of \eqref{hm:diffusion.an1+}, we will show
\begin{equation}
\big(\sqrt{n}(\tilde{\alpha}_{n}-\alpha_{0}),\, \sqrt{n\tau h_{0}}(\tilde{\beta}_{n}-\beta_{0})\big)\cil N_{p}\left(0,\diag(\tilde{\Gam}_{1,0}^{-1},\, \tilde{\Gam}^{-1}_{2,0})\right).
\label{hm:diffusion.an1}
\end{equation}
Let
\begin{equation}
D_{n}=D_{n}(h_{0}):=\diag\big(\sqrt{n}I_{p_{\al}},\, \sqrt{n\tau h_{0}}I_{p_{\beta}}\big), \qquad \tilde{\Gam}_{0}:=\diag(\tilde{\Gam}_{1,0},\, \tilde{\Gam}_{2,0}).
\nonumber
\end{equation}
By the standard argument, the consistency of $\tet$ ensures that $\pr\{\p_{\theta}\tilde{\mbbh}_{n}(\tet)=0\}\to 1$, so that we may and do focus on the event $\{\p_{\theta}\tilde{\mbbh}_{n}(\tet)=0\}$.
Then, by the Taylor expansion of $\theta\mapsto\p_{\theta}\tilde{\mbbh}_{n}(\tet)=0$ around $\tz$ and the measurable selection theorem (recall that $\overline{\Theta}$ is compact, so that $\tet$ a.s. exists), it suffices for \eqref{hm:diffusion.an1} to show
\begin{align}
\tilde{\D}_{n}&:= D_{n}^{-1}\p_{\theta}\tilde{\mbbh}_{n}(\tz) \cil N_{p}(0,\, \tilde{\Gam}_{0}), \label{hm:an.1-1} \\
\tilde{\Gam}_{n}(\rho_{n}) &:= -D_{n}^{-1}\p_{\theta}^{2}\tilde{\mbbh}_{n}(\rho_{n})D_{n}^{-1} \cip \tilde{\Gam}_{0},
\label{hm:an.1-2}
\end{align}
for any family $(\rho_{n})$ of random variables such that $\rho_{n} \cip \tz$.

\medskip

For brevity, from now on we will often remove the dependence on $\tz$ from the notation: $\tilde{\mbbh}_{n}:=\tilde{\mbbh}_{n}(\tz)$, $S_{j-1}:=S_{j-1}(\al_{0})$, $h:=h(\al_{0})$, and so on.

\medskip

\paragraph{{\it Proof of \eqref{hm:an.1-1}}}

First, we will specify the leading term of $\tilde{\D}_{n}$.
Introduce the following martingale-difference arrays:
\begin{align*}
\eta_{j} &:= S_{j-1}^{-1}\bigg[a_{j-1}\frac{\D_{j}w}{\sqrt{h_{0}}}, \, \p_{\beta}b_{j-1}\bigg] \in \mbbr^{p_{\beta}}, \\
\zeta_{1,j} &:=(\p_{\alpha}S_{j-1}^{-1})\bigg[\bigg(a_{j-1}\frac{\D_{j}w}{\sqrt{h_{0}}}\bigg)^{\otimes2}\bigg] + \tr\Big(S_{j-1}^{-1}\big(\p_{\alpha}S_{j-1}\big)\Big)\in \mbbr^{p_{\al}}, \\
\zeta_{2,j} &:= \bigg\{\int_{\mbbr^{d}}\tr\Big(S^{-1}(x,\alpha_{0})\p_{\alpha}S(x,\alpha_{0})\Big)\pi(dx)\bigg\}
\bigg(\frac{1}{d}\bigg\|\frac{\D_{j}w}{\sqrt{h_{0}}}\bigg\|^{2}-1\bigg) \in \mbbr^{p_{\al}}.
\end{align*}
Obviously,
\begin{equation}
\sup_{n}\sup_{j\le n}\big\{\E(\|\eta_{j}\|^{q}) \vee \E(\|\zeta_{1,j}\|^{q}) \vee \E(\|\zeta_{2,j}\|^{q})\big\}<\infty
\label{hm:an.1-2.5}
\end{equation}
for every $q>0$. 
By the standard arguments(for example, \cite{Mas13, Yos11}) we have
\begin{equation}
\frac{1}{\sqrt{n}}\sumj \bigg(\p_{\al}\big(\log|S_{j-1}|\big) + \frac{1}{\tau h_{0}}\p_{\al}S_{j-1}^{-1}\big[(\D_{j}X)^{\otimes2}\big]\bigg)=\frac{1}{\sqrt{n}}\sumj \zeta_{1,j} + o_{p}(1).
\label{hm:an.1-2.5+1}
\end{equation}
We can observe that
\begin{align}
\frac{1}{\sqrt{n}}\p_{\alpha}\tilde{\mbbh}_{n}
&= -\frac{1}{2\sqrt{n}}\sumj \p_{\al}\big(\log|S_{j-1}|\big) -\frac{nd}{2\sqrt{n}}\cdot\frac{\ds{\sumj\p_{\alpha}S_{j-1}^{-1}\big[(\D_{j}X)^{\otimes2}\big]}}
{\ds{\sumj S_{j-1}^{-1}\big[(\D_{j}X)^{\otimes2}\big]}} \nn\\
&\quad +\frac{1}{\sqrt{n}}\bigg(\sumj\p_{\alpha}S_{j-1}^{-1}\big[\D_{j}X,b_{j-1}\big]+\sumj S_{j-1}^{-1}\big[\D_{j}X,\p_{\alpha}b_{j-1}\big]\bigg) \nn\\
&\quad -\frac{1}{2\sqrt{n}}\bigg(\frac{1}{nd}\sumj\p_{\alpha}S_{j-1}^{-1}\big[(\D_{j}X)^{\otimes2}\big]\bigg)\bigg(\sumj S_{j-1}^{-1}\big[b_{j-1}^{\otimes2}\big]\bigg) \nn\\
&\quad -\frac{1}{2\sqrt{n}}\bigg(\frac{1}{nd}\sumj S_{j-1}^{-1}\big[(\D_{j}X)^{\otimes2}\big]\bigg)\bigg(\sumj\p_{\alpha}S_{j-1}^{-1}\big[b_{j-1}^{\otimes2}\big]+2\sumj S_{j-1}^{-1}\big[b_{j-1},\p_{\alpha}b_{j-1}\big]\bigg) \nn\\
&=-\frac{1}{2\sqrt{n}}\sumj \p_{\al}\big(\log|S_{j-1}|\big) -\frac{1}{2\sqrt{n}h}\cdot
\sumj\p_{\alpha}S_{j-1}^{-1}\big[(\D_{j}X)^{\otimes2}\big] +O_{p}\big(\sqrt{h_{0}}\big)+ O_{p}(\sqrt{n}h_{0})
\nn\\
&=-\frac{1}{2\sqrt{n}}\sumj \zeta_{1,j} +\frac{1}{2n}\sumj \frac{1}{\tau h_{0}}\p_{\al}S_{j-1}^{-1}\big[(\D_{j}X)^{\otimes2}\big] \cdot \frac{\tau h_{0}}{h}\cdot \sqrt{n}\bigg(\frac{h}{\tau h_{0}}-1\bigg) + o_{p}(1)
\nn\\
&=-\frac{1}{2\sqrt{n}}\sumj \zeta_{1,j}-\frac{1}{2}\bigg\{\int \tr\Big(S^{-1}(x,\al_{0})\big(\p_{\al}S(x,\al_{0})\big)\Big)\pi(dx) + o_{p}(1)\bigg\} \nn \\
&\quad\cdot\frac{\tau h_{0}}{h}\cdot\sqrt{n}\bigg(\frac{h}{\tau h_{0}}-1\bigg) + o_{p}(1).
\label{hm:an.1-3-1}
\end{align}
As in \eqref{hm:an.1-2.5+1}, we can deduce that
\begin{align}
\sqrt{n}\bigg(\frac{h}{\tau h_{0}}-1\bigg)
&=\frac{1}{\tau d\sqrt{n}}\sumj S_{j-1}^{-1}\bigg[\bigg(\frac{\D_{j}X}{\sqrt{h_{0}}}\bigg)^{\otimes2}
-\E_{j-1}\bigg\{\bigg(\frac{\D_{j}X}{\sqrt{h_{0}}}\bigg)^{\otimes2}\bigg\}\bigg]
\nn\\
&\quad +\sqrt{n}\frac{1}{\tau dn}\sumj S_{j-1}^{-1}
\bigg[\E_{j-1}\bigg\{\bigg(\frac{\D_{j}X}{\sqrt{h_{0}}}\bigg)^{\otimes2}\bigg\} - \tau S_{j-1}\bigg]
\nn\\
&=\frac{1}{\tau \sqrt{n}}\sumj \bigg(\frac{1}{d}S_{j-1}^{-1}\bigg[\bigg(\sqrt{\tau}a_{j-1}\frac{\D_{j}w}{\sqrt{h_{0}}}\bigg)^{\otimes2}\bigg]-\tau\bigg)
+O_{p}(\sqrt{n}h_{0}) \nn\\
&=\frac{1}{\sqrt{n}}\sumj \bigg(\frac{1}{d}\bigg\|\frac{\D_{j}w}{\sqrt{h_{0}}}\bigg\|^{2}-1\bigg)+O_{p}(\sqrt{n}h_{0})=O_{p}(1)
\label{hm:an.1-3-2}
\end{align}
by the Lindeberg-Feller theorem.
Substituting the last expression of \eqref{hm:an.1-3-2} into \eqref{hm:an.1-3-1}, we conclude that
\begin{align}
\frac{1}{\sqrt{n}}\p_{\alpha}\tilde{\mbbh}_{n} &=-\frac{1}{2\sqrt{n}}\sumj \zeta_{1,j}-\frac{1}{2}\bigg\{\int \tr\Big(S^{-1}(x,\al_{0})\big(\p_{\al}S(x,\al_{0})\big)\Big)\pi(dx) + o_{p}(1)\bigg\} \nn \\
&\quad\cdot\left(1+o_{p}(1)\right)\cdot O_{p}(1) + o_{p}(1) \nn \\
&= -\frac{1}{2\sqrt{n}}\sumj(\zeta_{1,j}+\zeta_{2,j})+o_{p}(1).
\label{hm:an.1-3}
\end{align}
As for the $\beta$-part, we have
\begin{align}
\frac{1}{\sqrt{n\tau h_{0}}}\p_{\beta}\tilde{\mbbh}_{n}
&= \frac{1}{\sqrt{n\tau h_{0}}}\sumj S_{j-1}^{-1}\big[\D_{j}X, \p_{\beta}b_{j-1}\big] 
- \frac{h}{\sqrt{n\tau h_{0}}}\sumj S_{j-1}^{-1}\big[b_{j-1}, \p_{\beta}b_{j-1}\big]
\label{hm:aux.2} \\
&= \frac{1}{\sqrt{n\tau h_{0}}}\sumj S_{j-1}^{-1}\big[\D_{j}X - \E_{j-1}(\D_{j}X), \p_{\beta}b_{j-1}\big] \nn\\
&\quad
- \sqrt{\tau h_{0}} \bigg\{\sqrt{n}\bigg(\frac{h}{\tau h_{0}}-1\bigg) \cdot \frac{1}{n}\sumj S_{j-1}^{-1}\big[b_{j-1}, \p_{\beta}b_{j-1}\big]\bigg\} + O_{p}(\sqrt{n}h_{0}) \nn\\
&=\frac{1}{\sqrt{n\tau h_{0}}}\sumj S_{j-1}^{-1}\big[\D_{j}X - \E_{j-1}(\D_{j}X), \p_{\beta}b_{j-1}\big] + O_{p}(\sqrt{h_{0}})+ O_{p}(\sqrt{n}h_{0}) \nn\\
&=\frac{1}{\sqrt{nh_{0}}}\sumj S_{j-1}^{-1}\big[a_{j-1}\D_{j}w,\, \p_{\beta}b_{j-1}\big] \nn\\
&\quad +\frac{1}{\sqrt{n}}\sumj S_{j-1}^{-1}\bigg[\frac{1}{\sqrt{h_{0}}}\int_{t_{j-1}}^{t_{j}}\left(
a(X_{s},\al_{0})-a_{j-1}\right) dw_{s},\, \p_{\beta}b_{j-1}\bigg] + O_{p}(\sqrt{n}h_{0}) \nn\\
&=\frac{1}{\sqrt{n}}\sumj \eta_{j} + o_{p}(1).
\label{hm:an.1-4}
\end{align}
Thanks to \eqref{hm:an.1-2.5}, the convergence (the Lyapunov condition)
\begin{equation}
\sumj\E_{j-1}\bigg(\bigg\|\frac{1}{\sqrt{n}}(\zeta_{1,j}+\zeta_{2,j})\bigg\|^{4}\bigg) \,\vee\,
\sum\E_{j-1}\bigg( \bigg\|\frac{1}{\sqrt{n}}\eta_{j}\bigg\|^{4}\bigg) \cip 0
\nn
\end{equation}
is trivial. In view of the stochastic expansions \eqref{hm:an.1-3} and \eqref{hm:an.1-4} and the central limit theorem for martingale difference arrays, the convergence \eqref{hm:an.1-1} follows from the convergences of the quadratic characteristics:
\begin{equation}
\left\{
\begin{array}{l}
\ds{\frac{1}{4n}\sumj\E_{j-1}\big\{(\zeta_{1,j}+\zeta_{2,j})^{\otimes 2}\big\}\cip\tilde{\Gam}_{1,0},} \\
\ds{\frac{1}{n}\sumj\E_{j-1}\big(\eta_{j}^{\otimes 2}\big) \cip \tilde{\Gam}_{2,0},} \\
\ds{\frac{1}{2n}\sumj\E_{j-1}\big\{ (\zeta_{1,j}+\zeta_{2,j})\eta_{j}^{\top}\big\} \cip 0.}
\end{array}
\right.
\label{hm:an.1-5}
\end{equation}
The third one is trivial since $\sumj\E_{j-1}\big\{ (\zeta_{1,j}+\zeta_{2,j})\eta_{j}^{\top}\big\}=0$ a.s.
We will only show the first one, for the second one is exactly the same as in the case where $h_{0}$ is known (obviously $\tilde{\Gam}_{2,0}>0$).

\medskip

Fix any $u_{1}\in\mbbr^{p_{\al}}$.
Since $\|\D_{j}w / \sqrt{h_{0}}\|^{2} \sim \chi^{2}(d)$ conditional on $\mcf_{t_{j-1}}$, it follows that
\begin{align}
\frac{1}{4n}\sumj\E_{j-1}\big(\zeta_{2,j}^{\otimes 2}\big)[u_{1}^{\otimes 2}]
&= \frac{1}{4}\bigg\{\int_{\mbbr^{d}}\tr\Big(S^{-1}(x,\alpha_{0})\big(\p_{\alpha}S(x,\alpha_{0})\big)\Big)[u_{1}]\pi(dx)\bigg\}^{2}
\bigg(\frac{2}{d}+o_{p}(1)\bigg) \nn\\
&\cip \frac{1}{2d}\bigg\{\int_{\mbbr^{d}}\tr\Big(S^{-1}(x,\alpha_{0})\big(\p_{\alpha}S(x,\alpha_{0})\big)\Big)[u_{1}]\pi(dx)\bigg\}^{2}.
\label{hm:an.1-6}
\end{align}
Write $\ep_{j}=h_{0}^{-1/2}\D_{j}w$ and $A_{1,j-1}=S_{j-1}^{-1/2}(\p_{\al}S_{j-1}[u_{1}])S_{j-1}^{-1/2}$; then, $v_{j}:=S_{j-1}^{-1/2}a_{j-1}\ep_{j} \sim N_{d}(0,I_{d})$ conditional on $\mcf_{t_{j-1}}$.
We will make use of the following moment expression (see \cite[Theorem 4.2]{MagNeu79}): 
for the Wishart distributed random variable $W_{j}:=v_{j}v_{j}^{\top}$ and for any $\mcf_{t_{j-1}}$-measurable $d\times d$-symmetric matrices $M'_{j-1}$ and $M''_{j-1}$,
\begin{equation}
\E_{j-1}\left\{\tr(M'_{j-1}W_{j})\tr(M''_{j-1}W_{j})\right\} 
= 2\tr(M'_{j-1}M''_{j-1}) + \tr(M'_{j-1})\tr(M''_{j-1}).
\label{hm:wishart}
\end{equation}
Write $m_{0}=\int_{\mbbr^{d}}\tr(S^{-1}(x,\alpha_{0})\p_{\alpha}S(x,\alpha_{0}))[u_{1}]\pi(dx)$. By \eqref{hm:wishart} we have
\begin{align}
\frac{1}{2n}\sumj\E_{j-1}\big(\zeta_{1,j}\zeta_{2,j}^{\top}\big)[u_{1}^{\otimes 2}]
&=\frac{1}{2n}\sumj\E_{j-1}\bigg\{\bigg(\frac{1}{h_{0}}\p_{\alpha}S_{j-1}^{-1}[u_{1}]\bigg)\big[\big(a_{j-1}\D_{j}w\big)^{\otimes2}\big]\bigg\} \nn\\
&=-\frac{m_{0}}{2}\frac{1}{n} \sumj \E_{j-1}\bigg( \frac{1}{d}v_{j}^{\top}A_{1,j-1}v_{j} v_{j}^{\top}v_{j} - v_{j}^{\top}A_{1,j-1}v_{j}\bigg) \nn\\
&=-\frac{m_{0}}{2}\frac{1}{n} \sumj \bigg\{\frac{1}{d}\E_{j-1}\left(\tr(A_{1,j-1}W_{j}) \tr(I_{d}W_{j})\right) - \tr(A_{1,j-1}I_{d}) \bigg\} \nn\\
&=-\frac{m_{0}}{2}\frac{1}{n} \sumj \bigg\{\frac{1}{d}\left( 2\tr(A_{1,j-1}) + d\tr(A_{1,j-1}) \right) - \tr(A_{1,j-1}) \bigg\} \nn\\
&=-\frac{m_{0}}{d}\frac{1}{n} \sumj \tr(A_{1,j-1}) \cip -\frac{m_{0}^{2}}{d}.
\label{hm:an.1-7}
\end{align}
Further, again by \eqref{hm:wishart},
\begin{align}
& \frac{1}{4n}\sumj\E_{j-1}\big(\zeta_{1,j}^{\otimes 2}\big)[u_{1}^{\otimes 2}] \nn\\
&=\frac{1}{4n}\sumj \E_{j-1}\left\{\left(-\tr(A_{1,j-1}W_{j}) + \tr(A_{1,j-1})\right)^{2}\right\} \nn\\
&=\frac{1}{4n}\sumj \left[ \E_{j-1}\left\{\tr(A_{1,j-1}W_{j})^{2} \right\} -2\tr(A_{1,j-1})\E_{j-1}\left\{\tr(A_{1,j-1}W_{j})\right\} + \tr(A_{1,j-1})^{2}\right] \nn\\
&= \frac{1}{2n}\sumj \tr(A_{1,j-1}^{2}) \nn\\
&\cip \frac{1}{2}\int_{\mbbr^{d}}\tr\Big(S^{-1}(x,\alpha_{0})\big(\p_{\alpha}S(x,\alpha_{0})\big)S^{-1}(x,\alpha_{0})\big(\p_{\alpha}S(x,\alpha_{0})\big)\Big)[u_{1}^{\otimes2}]\pi(dx).
\label{hm:an.1-8}
\end{align}
The first one in \eqref{hm:an.1-5} follows from \eqref{hm:an.1-6}, \eqref{hm:an.1-7} and \eqref{hm:an.1-8}. 

\medskip

It remains to show the positive definiteness of $\tilde{\Gam}_{1,0}$.
Let $M(x;u_{1}):=(S^{-1}(\p_{\al}S[u_{1}]))(x,\al_{0})$. The Cauchy-Schwarz inequality gives
\begin{align}
\tilde{\Gam}_{1,0}[u_{1}^{\otimes 2}] &= \frac{1}{2d}\bigg\{d\int\tr\big(M(x;u_{1})^{2} \big)\pi(dx) -\bigg(\int\tr\big(M(x;u_{1})\big)\pi(dx)\bigg)^{2}\bigg\}
\nn\\
&\ge \frac{1}{2d}\int \left( d\tr\big(M(x;u_{1})^{2} \big) - \big\{\tr\big(M(x;u_{1})\big)\big\}^{2}\right)\pi(dx).
\nonumber
\end{align}
Under Assumption \ref{Ass3}(i), the positivity of the last lower bound for $u_{1}\ne 0$ is ensured by the Cauchy-Schwarz type inequality $\int \tr\{C(x)\}^{2}\pi(dx) \le d\int \tr\{C(x)^{2}\}\pi(dx)$ for any $d\times d$-matrix valued function $C(x)$ with the equality holding only when $x\mapsto\tr\{C(x)\}$ is $\pi$-a.e. constant.
This together with Assumption \ref{Ass3}(i) verifies the positive definiteness of $\tilde{\Gam}_{1,0}$.

\medskip

\paragraph{{\it Proof of \eqref{hm:an.1-2}}}

Fix any $u_{1}\in\mbbr^{p_{\al}}$ and $u_{2}\in\mbbr^{p_{\beta}}$, and write $\rho_{n}=(\rho_{\al,n},\rho_{\beta,n})\in\mbbr^{p_{\al}}\times\mbbr^{p_{\beta}}$.

We can handle $\p_{\alpha}\p_{\beta}\tilde{\mbbh}_{n}$ and $\p_{\beta}^{2}\tilde{\mbbh}_{n}$ in similar ways to the case of known $h_{0}$ (see \cite{Kes97} for details): by \eqref{hm:aux.1} and \eqref{hm:aux.2} with $\tz$ replaced by $\theta$ we have
\begin{align*}
-\frac{1}{n\sqrt{\tau h_{0}}}\p_{\alpha}\p_{\beta}\tilde{\mbbh}_{n}(\theta)[u_{1},u_{2}] =  o_{p}^{\ast}(1) \cip 0.
\end{align*}
Since it can be shown that $D_{n}(\rho_{n}-\tz)=O_{p}(1)$ and $\sup_{\theta}\|\frac{1}{n\tau h_{0}}\p_{\theta}\p_{\beta}^{2}\tilde{\mbbh}_{n}(\theta)\|=O_{p}(1)$, we obtain 
\begin{align}
-\frac{1}{n\tau h_{0}}\p_{\beta}^{2}\tilde{\mbbh}_{n}(\rho_{n})[u_{2}^{\otimes 2}]
&=-\frac{1}{n\tau h_{0}}\left\{\p_{\beta}^{2}\tilde{\mbbh}_{n}(\tz)[u_{2}^{\otimes 2}]+D_{n}^{-1}\int_{0}^{1}\p_{\theta}\p_{\beta}^{2}\tilde{\mbbh}_{n}\big(\tz+s(\rho_{n}-\tz)\big)ds[D_{n}(\rho_{n}-\tz)]\right\} \nn\\
&=-\frac{1}{n\tau h_{0}}\sumj S_{j-1}^{-1}(\alpha_{0})\big[\D_{j}X, \p_{\beta}^{2}b_{j-1}(\tz)[u_{2}^{\otimes 2}]\big] \nn\\
&\qquad+\frac{h(\alpha_{0})}{n\tau h_{0}}\bigg(\sumj S_{j-1}^{-1}(\alpha_{0})\big[b_{j-1}(\tz), \p_{\beta}^{2}b_{j-1}(\tz)[u_{2}^{\otimes 2}]\big] \nn\\
&\qquad+\sumj S_{j-1}^{-1}(\alpha_{0})\big[\p_{\beta}b_{j-1}(\tz)[u_{2}], \p_{\beta}b_{j-1}(\tz)[u_{2}]\big]\bigg)+o_{p}(1) \nn\\
&=-\frac{1}{n\tau h_{0}}\sumj S_{j-1}^{-1}(\alpha_{0})\big[\D_{j}X-\E_{j-1}(\D_{j}X), \p_{\beta}^{2}b_{j-1}(\tz)[u_{2}^{\otimes 2}]\big] \nn\\
&\qquad-\frac{1}{n\tau h_{0}}\sumj S_{j-1}^{-1}(\alpha_{0})\big[\E_{j-1}(\D_{j}X)-\tau h_{0}b_{j-1}(\tz), \p_{\beta}^{2}b_{j-1}(\tz)[u_{2}^{\otimes 2}]\big] \nn\\
&\qquad-\frac{1}{n\tau h_{0}}\sumj S_{j-1}^{-1}(\alpha_{0})\big[\tau h_{0}b_{j-1}(\tz), \p_{\beta}^{2}b_{j-1}(\tz)[u_{2}^{\otimes 2}]\big] \nn\\
&\qquad+\big((1+o_{p}(1)\big)\bigg(\frac{1}{n}\sumj S_{j-1}^{-1}(\alpha_{0})\big[b_{j-1}(\tz), \p_{\beta}^{2}b_{j-1}(\tz)[u_{2}^{\otimes 2}]\big] \nn\\
&\qquad+\frac{1}{n}\sumj S_{j-1}^{-1}(\alpha_{0})\big[\p_{\beta}b_{j-1}(\tz)[u_{2}], \p_{\beta}b_{j-1}(\tz)[u_{2}]\big]\bigg)+o_{p}(1) \nn\\
&\cip \tilde{\Gam}_{2,0}[u_{2}^{\otimes 2}].
\label{hm:an.2-1-1}
\end{align}
For a square matrix $S=S(\al)$ with $\dot{S}$ and $\ddot{S}$ respectively denoting the first and second derivatives with respect to $\al$, the following two identities hold:
\begin{align}
\p_{\al}^{2}(S^{-1}) &= 2S^{-1}\dot{S}S^{-1}\dot{S}S^{-1} - S^{-1}\ddot{S}S^{-1}, \nn\\
\p_{\al}^{2}\big(\log|S|\big) &= \tr(-S^{-1}\dot{S}S^{-1}\dot{S} + S^{-1}\ddot{S}).
\nonumber
\end{align}
We have
\begin{equation}
\frac{\p_{\al}^{k}h(\al)}{h_{0}} 
= \frac{\tau}{nd}\sumj \tr\left\{ \p_{\al}^{k}S_{j-1}^{-1}(\al) \cdot S_{j-1}(\al_{0}) \right\} + O_{p}^{\ast}\big(n^{-1/2}\big)
\label{hm:an.2-1}
\end{equation}
for each $k\in\{0,1,2\}$;
moreover, it holds that $\sqrt{n}(h(\bar{\al}_{n})/h_{0}-\tau)=O_{p}(1)$ for any $\sqrt{n}$-consistent estimator $\bar{\al}_{n}$ of $\al$.
Letting $\dot{S}_{1,j-1}(\al):=\p_{\al}S_{j-1}(\al)[u_{1}]$ and $\ddot{S}_{1,j-1}(\al):=\p_{\al}^{2}S_{j-1}(\al)[u_{1}^{\otimes 2}]$ and substituting the expression \eqref{hm:an.2-1}, we see that
\begin{align}
& -\frac{1}{n}\p_{\al}^{2}\tilde{\mbbh}_{n}(\rho_{n})[u_{1}^{\otimes 2}] \nn\\
&= \frac{1}{2}\bigg[\frac{1}{n}\sumj\tr(-S_{j-1}^{-1}\dot{S}_{1,j-1}S^{-1}_{j-1}\dot{S}_{1,j-1} + S^{-1}_{j-1}\ddot{S}_{1,j-1})(\rho_{\al,n})
\nn\\
&\quad +d \bigg\{\frac{1}{h(\rho_{\al,n})}\p_{\al}^{2}h(\rho_{\al,n})[u_{1}^{\otimes 2}] 
- \bigg(\frac{1}{h(\rho_{\al,n})}\p_{\al}h(\rho_{\al,n})[u_{1}]\bigg)^{2}\bigg\}\bigg]
+o_{p}(1) \nn\\
&= \frac{1}{2}\bigg\{\frac{1}{n}\sumj\tr(-S_{j-1}^{-1}\dot{S}_{1,j-1}S^{-1}_{j-1}\dot{S}_{1,j-1} + S^{-1}_{j-1}\ddot{S}_{1,j-1})(\rho_{\al,n})
\nn\\
&\quad + \frac{1}{n}\sumj \tr\big(\p_{\al}^{2}S_{j-1}^{-1}(\rho_{\al,n})[u_{1}^{\otimes 2}] S_{j-1}(\al_{0})\big)
- d\bigg( \frac{1}{dn}\sumj \tr\big(\dot{S}_{1,j-1} S_{j-1}(\al_{0}) \bigg)^{2}\bigg\} +o_{p}(1) \nn\\
&= \frac{1}{2n}\sumj\tr(S_{j-1}^{-1}\dot{S}_{1,j-1}S^{-1}_{j-1}\dot{S}_{1,j-1})(\al_{0})
- \frac{1}{2d}\bigg( \frac{1}{n}\sumj \tr\big(\dot{S}_{1,j-1} S_{j-1}\big)(\al_{0}) \bigg)^{2} +o_{p}(1) \nn\\
&\cip \tilde{\Gam}_{1,0}[u_{1}^{\otimes2}]. \nn
\end{align}
The proof of \eqref{hm:an.1-2} is complete.

\medskip

\paragraph{{\it Proof of Theorem \ref{hm:thm2.diffusion.an}}}

The convergence \eqref{hm:diffusion.an1+} follows on showing that
\begin{equation}
\left( \tilde{u}^{h}_{n}, \tilde{u}^{\al}_{n}, \tilde{u}^{\beta}_{n} \right) :=
\bigg(\sqrt{n}\bigg(\frac{\tilde{h}}{\tau h_{0}}-1\bigg),\, \sqrt{n}(\tilde{\alpha}_{n}-\alpha_{0}),\, \sqrt{n\tau h_{0}}(\tilde{\beta}_{n}-\beta_{0})\bigg)
\cil N_{1+p}\left(0,\Sigma(\tz)\right),
\label{hm:thm2.diffusion.an-1}
\end{equation}
since we then have $\sqrt{n\tilde{h}}=\sqrt{n\tau h_{0} + O_{p}(\sqrt{n}h_{0})}=\sqrt{n\tau h_{0}} + o_{p}(1)$, ensuring that we may replace ``$\tau h_{0}$'' in $\sqrt{n\tau h_{0}}(\tilde{\beta}_{n}-\beta_{0})$ by $\tilde{h}$.

Let
\begin{equation}
\tilde{\D}^{\al}_{n} := -\frac{1}{2\sqrt{n}}\sumj(\zeta_{1,j}+\zeta_{2,j}), \qquad 
\tilde{\D}^{\beta}_{n} := \frac{1}{\sqrt{n}}\sumj \eta_{j}, \qquad \tilde{\D}^{h}_{n} := \frac{1}{\sqrt{n}}\sumj \ep_{j},
\nonumber
\end{equation}
where $\ep_{j}:=d^{-1}| h_{0}^{-1/2}\D_{j}w|^{2}-1$.
Proceeding as in \eqref{hm:an.1-3-2} with expanding $S_{j-1}^{-1}(\aet)$ around $\al_{0}$, we can deduce that
\begin{equation}
\tilde{u}^{h}_{n}
= \tilde{\D}^{h}_{n} - K[\tilde{u}^{\al}_{n}] +o_{p}(1).
\nn
\end{equation}
We have seen in the proof of \eqref{hm:diffusion.an1} that $(\tilde{u}^{\al}_{n},\, \tilde{u}^{\beta}_{n}) = \diag\big( \tilde{\Gam}_{1,0},\, \tilde{\Gam}_{2,0}\big)^{-1}\big[(\tilde{\D}^{\al}_{n}, \, \tilde{\D}^{\beta}_{n})\big]+ o_{p}(1)$. Hence
\begin{equation}
\left( \tilde{u}^{h}_{n}, \tilde{u}^{\al}_{n}, \tilde{u}^{\beta}_{n} \right)
=\left(
\begin{array}{ccc}
1 & - K^{\top}\tilde{\Gam}_{1,0}^{-1} & 0 \\
0 & \tilde{\Gam}_{1,0}^{-1} & 0 \\
0 & 0 & \tilde{\Gam}^{-1}_{2,0}
\end{array}
\right)
\big[ \big( \tilde{\D}^{h}_{n}, \tilde{\D}^{\al}_{n}, \tilde{\D}^{\beta}_{n} \big) \big] + o_{p}(1).
\label{hm:thm2.diffusion.an-3}
\end{equation}
It is easy to see that for $\del:=h_{0}^{-1/2}\D_{1}w\sim N_{d}(0,I_{d})$,
\begin{align}
\E_{j-1}(\ep_{j}\eta_{j}) &= 0, \nn\\
\E_{j-1}(\ep_{j}\zeta_{1,j}) &= \tr \bigg\{ \left\{ a_{j-1}^{\top}(\p_{\al}S_{j-1}^{-1})a_{j-1}\right\}\bigg(
\frac{1}{d}\E\left(\del^{\otimes 2}|\del|^{2}\right)-I_{d}\bigg)\bigg\} \nn\\
&=\frac{2}{d}\tr \left\{ a_{j-1}^{\top}(\p_{\al}S_{j-1}^{-1})a_{j-1}\right\}
=-\frac{2}{d}\tr \left\{ S_{j-1}^{-1}(\p_{\al}S_{j-1})\right\},
\nn\\
\E_{j-1}(\ep_{j}\zeta_{2,j}) &=dK \E_{j-1}(|\del|^{2}) = 2K.
\nonumber
\end{align}
Then, applying the martingale central limit theorem we conclude that
\begin{equation}
\big( \tilde{\D}^{h}_{n}, \tilde{\D}^{\al}_{n}, \tilde{\D}^{\beta}_{n} \big) \cil N_{1+p}\left(0,\, 
\left(
\begin{array}{ccc}
2/d & & \text{{\rm sym.}} \\
0 & \tilde{\Gam}_{1,0} &  \\
0 & 0 & \tilde{\Gam}_{2,0}
\end{array}
\right)\right)
\label{hm:thm2.diffusion.an-4}
\end{equation}
Combining \eqref{hm:thm2.diffusion.an-3} and \eqref{hm:thm2.diffusion.an-4} leads to \eqref{hm:thm2.diffusion.an-1}.

Finally, the convergences $\tilde{K}_{n} \cip K$, $\tilde{\Gam}_{1,n} \cip \tilde{\Gam}_{1,0}$, and $\tilde{\Gam}_{2,n} \cip \tilde{\Gam}_{2,0}$ are direct consequences of the uniform law of large numbers.

\subsection{Proof of Theorem \ref{hm:thm_stepwise.ergo.diff}}

Building on the proof of the consistency of $\aet$, we see that 
\begin{equation}
\tilde{\mbby}_{n}^{1}(\alpha,\beta) = \frac{1}{n}\left(\tilde{\mbbh}_{1,n}(\alpha)-\tilde{\mbbh}_{1,n}(\alpha_{0})\right)+o_{p}^{\ast}(1).
\nonumber
\end{equation}
In Section \ref{hm:proof.consistency} we saw that $|\tilde{\mbby}_{n}^{1}(\alpha,\beta) - \tilde{\mbby}_{0}^{1}(\al)| = o_{p}^{\ast}(1)$,
hence $| n^{-1}\{\tilde{\mbbh}_{1,n}(\alpha)-\tilde{\mbbh}_{1,n}(\alpha_{0})\} - \tilde{\mbby}_{0}^{1}(\al)| = o_{p}^{\ast}(1)$ as well.
This combined with the fact that the function $\tilde{\mbby}_{0}^{1}(\al)$ is free from $\beta$ leads to the the consistency $\aet'\cip\al_{0}$.
The consistency of $\bet'$ is completely the same as in the proof of $\bet\cip\beta_{0}$. It follows that $\tet^{\prime}\cip\tz$.

Combining \eqref{hm:an.1-3-1}, the convergence in probability of $-n^{-1}\p_{\al}^{2}\tilde{\mbbh}_{n}(\rho_{n})$ for any $\rho_{n}\cip\tz$ which we have seen at the end of the proof of \eqref{hm:an.1-2}, and \eqref{hm:an.1-3}, we deduce that
\begin{equation}
\sqrt{n}(\aet'-\al_{0}) = -\tilde{\Gam}_{1,0}^{-1}\frac{1}{2\sqrt{n}}\sumj(\zeta_{1,j}+\zeta_{2,j}) + o_{p}(1).
\label{hm:2step-1}
\end{equation}
In particular, the asymptotic normality $\sqrt{n}(\aet'-\al_{0}) \cil N_{p_{\al}}(0,\tilde{\Gam}_{1,0}^{-1})$ follows without reference to structure of the drift.
Since \eqref{hm:2step-1} ensures the tightness of $\{\sqrt{n}(\aet'-\al_{0})\}_{n}$, we can show the equations $\tilde{h}^{\prime}=\tau h_{0}+o_{p}(h_{0}/\sqrt{n})$ and
\begin{align}
\sqrt{n}\left(\frac{\tilde{h}^{\prime}}{\tau h_{0}}-1\right)&= \frac{1}{\sqrt{n}}\sumj \ep_{j}-K\big[\sqrt{n}(\aet'-\al_{0})\big]+o_{p}(1). \label{se:2step-3}
\end{align}
Turning to $\beta$, we may suppose that $\p_{\beta}\tilde{\mbbh}_{2,n}(\tet')=0$, the probability of which tends to $1$ by the consistency of $\tet'$.
Then we have the expansion
\begin{align*}
\frac{1}{\sqrt{n\tau h_{0}}}\p_{\beta}\tilde{\mbbh}_{2,n}(\aet',\beta_{0})&=-\frac{1}{n\tau h_{0}}\p_{\beta}^{2}\tilde{\mbbh}_{2,n}(\aet', \rho'_{\beta,n}) \big[\sqrt{n\tau h_{0}}(\bet'-\beta_{0})\big] \\
&=-\frac{1}{nh_{0}}\p_{\beta}^{2}\tilde{\mbbh}_{2,n}(\aet', \rho'_{\beta,n}) \left[\sqrt{n\tilde{h}^{\prime}}(\bet'-\beta_{0})\right]+o_{p}(1),
\end{align*}
where $\rho'_{\beta,n}$ is a random point on the segment joining $\bet'$ and $\beta_{0}$.
As in \eqref{hm:an.2-1-1} we have
\begin{equation}
-\frac{1}{nh_{0}}\p_{\beta}^{2}\tilde{\mbbh}_{2,n}(\aet', \rho'_{\beta,n}) = \tilde{\Gam}_{2,0}+o_{p}(1).
\nonumber
\end{equation}
Further, the tightness of $\{\sqrt{n}(\aet'-\al_{0})\}_{n}$ and \eqref{hm:an.1-4} imply the equation
\begin{align}
\frac{1}{\sqrt{n\tau h_{0}}}\p_{\beta}\tilde{\mbbh}_{2,n}(\aet',\beta_{0})
&=\frac{1}{\sqrt{n\tau h_{0}}}\p_{\beta}\tilde{\mbbh}_{2,n}(\tz) + \frac{1}{\sqrt{n\tau h_{0}}}\p_{\al}\p_{\beta}\tilde{\mbbh}_{2,n}(\rho'_{\al,n},\beta_{0})[\aet'-\al_{0}] \nn\\
&=\frac{1}{\sqrt{n}}\sumj \eta_{j} + O_{p}(\sqrt{n}h_{0})+O_{p}(\sqrt{h_{0}}) \nn\\
&=\frac{1}{\sqrt{n}}\sumj \eta_{j} + o_{p}(1).
\nonumber
\end{align}
Piecing together these observations we deduce that
\begin{equation}
\sqrt{n\tilde{h}^{\prime}}(\bet'-\beta_{0}) = \tilde{\Gam}^{-1}_{2,0}\frac{1}{\sqrt{n}}\sumj \eta_{j} + o_{p}(1).
\label{hm:2step-2}
\end{equation}
Having \eqref{hm:2step-1}, \eqref{se:2step-3}, and \eqref{hm:2step-2} in hand, we can derive the convergence \eqref{hm:diffusion.an1_2step} in the same way as in the proof of Theorem \ref{hm:thm2.diffusion.an}.

\subsection{Proof of Theorem \ref{se:thm.pldi}}
\label{se:proof.pldi}

The PLDI for ergodic diffusion model with $h=h_{0}$ ($\tau=1$) being known has been derived \cite[Section 6]{Yos11}.
However, the same scenario would not go through in our proof without additional considerations because the random function $\tilde{\mbbh}_{n}(\theta)$ is different from the original GQLF $\mbbh_{n}(\theta;h)$.

\subsubsection{$g_{q}$-exponential ergodicity}
\label{hm:proof.pldi-g.exp.ergo}

Let $\{P_{t}(x,dy)\}_{t\in\mbbrp}$ denote the family of the transition functions of $X$. 
Given a function $\rho:\mbbr^{d}\to\mbbrp$ and a signed measure $m$ on the $d$-dimensional Borel space, 
we define
\begin{equation}
\|m\|_{\rho}=\sup\left\{\left|\int f(y)m(dy)\right|:\, \text{$f$ is $\mbbr$-valued and measurable, such that $|f|\le\rho$}\right\}.
\nn
\end{equation}

\begin{lem}
Under Assumption \ref{Ass1+}, the following statements hold.
\begin{enumerate}
\item There exist a probability measure $\pi_{0}$ and a nonnegative $\mcc^{2}$ function $g$ such that
\begin{equation}
\limsup_{|x|\to\infty}\frac{1+\|x\|^{q}}{g(x)} = 0
\nonumber
\end{equation}
for every $q>0$, and that
\begin{equation}
\|P_{t}(x,\cdot)-\pi_{0}(\cdot)\|_{g_{q}}\lesssim e^{-at}g_{q}(x),\quad x\in\mbbr^{d}
\label{hm:Ass:pldi1-1}
\end{equation}
for some constant $a>0$.
\item $\sup_{t}\E(\|X_{t}\|^{q})<\infty$ for every $q>0$.
\end{enumerate}
\label{hm:lem_pldi-g.exp.ergo}
\end{lem}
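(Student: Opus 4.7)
The plan is to construct a Lyapunov function $g$ tailored to each case of Assumption \ref{Ass1+}(2), verify a Foster--Lyapunov drift inequality for the infinitesimal generator $\mathcal{L}$ of $X$, combine it with a minorization on level sets of $g$ coming from the uniform ellipticity assumed in Assumption \ref{Ass1+}(1), and then invoke the standard Down--Meyn--Tweedie / Harris theorem to conclude geometric ergodicity in a $g$-weighted total variation norm. Since $g$ will be chosen to dominate every polynomial, the $g_{q}$-type bound \eqref{hm:Ass:pldi1-1} and the polynomial-moment conclusion (2) will both drop out.

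Concretely, I would take $g(x):=\exp(\ep\|x\|^{2})$ in case (a) and $g(x):=\exp(\ep\|x\|)$ in case (b), with $\ep>0$ small (and in case (b) no larger than $\ep_{1}$); after harmless smoothing near the origin $g$ is globally $\mathcal{C}^{2}$. Applying It\^o's formula and using the bounds in Assumption \ref{Ass1+}, I would obtain in case (a)
\[
\mathcal{L}g(x)\le \bigl(\ep\tau(-2K_{1}+2\ep C_{1})\|x\|^{2}+\ep\tau(2K_{2}+dC_{1})\bigr)g(x),
\]
and an analogous bound with $\|x\|^{2}$ replaced by $\|x\|$ in case (b), where the essential boundedness of $b$ controls the first-order term. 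Choosing $\ep$ so that the coefficient of $\|x\|^{2}$ (resp.\ $\|x\|$) is strictly negative produces the drift inequality $\mathcal{L}g\le -\lambda g+M\mathbf{1}_{C}$ on a compact set $C$, for some $\lambda,M>0$.

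For the minorization I would exploit the uniform ellipticity and the bounded coefficients in Assumption \ref{Ass1+}(1): standard parabolic/heat-kernel lower bounds (as used in \cite{Gob02} and \cite{Ver97}) give a positive transition density bounded below uniformly on compacts, so every compact is petite for the skeleton chain. Combining the drift condition with this minorization, the continuous-time Harris theorem yields a unique invariant probability $\pi_{0}$ and $\|P_{t}(x,\cdot)-\pi_{0}\|_{g}\lesssim e^{-at}g(x)$ for some $a>0$. Because $g$ dominates $1+\|x\|^{q}$ for every $q>0$, both the growth condition on $g$ and the weighted-norm bound \eqref{hm:Ass:pldi1-1} follow at once; the integrability $\E\{g(X_{0})\}<\infty$ from Assumption \ref{Ass1+}(2) ensures the right-hand side is finite.

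Part (2) is then immediate: integrating the drift inequality gives $\E[g(X_{t})]\le e^{-\lambda t}\E[g(X_{0})]+M/\lambda$, uniformly bounded in $t\ge 0$, whence $\sup_{t}\E(\|X_{t}\|^{q})<\infty$ via $\|x\|^{q}\lesssim g(x)$. The only delicate step is calibrating $\ep$ so that the quadratic-variation contribution $2\ep^{2}\tau C_{1}\|x\|^{2}$ is strictly dominated by the dissipation $-2\ep\tau K_{1}\|x\|^{2}$ coming from the drift condition, which forces $\ep<K_{1}/C_{1}$ (and $\ep\le\ep_{1}$ in case (b) to preserve the initial moment). Once that is arranged, the remainder is a routine application of classical ergodic-theoretic machinery for uniformly elliptic diffusions.
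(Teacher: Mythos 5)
Your argument is correct and follows essentially the same route as the paper: an exponential Lyapunov function ($\exp(\ep\|x\|^{2})$ in case (a), $\exp(\ep\|x\|)$ in case (b)) yielding a Foster--Lyapunov drift inequality for the generator, petiteness of compact sets from the positivity of the (Aronson-type) transition density, and the continuous-time Meyn--Tweedie theorem \cite{MeyTwe-III}; the paper merely outsources the drift computation and the density bounds to \cite{Gob02} instead of carrying out the It\^o calculation explicitly. The only cosmetic point is that the calibration $\ep\le\ep_{1}$ is needed in case (a) as well as in case (b), so that $\E\{g(X_{0})\}<\infty$ and the Gronwall step in part (2) goes through.
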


\begin{proof}
(1)
In view of the general results developed in \cite[Section 6]{MeyTwe-III}, under Assumption \ref{Ass1+} it suffices for \eqref{hm:Ass:pldi1-1} to show the following:
(i) every compact sets in $\mbbr^{d}$ are \textit{petite} for the Markov chain $(X_{t_{j}})_{j=0}^{n}$ for any $h>0$ small enough;
(ii) the drift condition
\begin{equation}
\mathcal{A}_{\theta} g(x) \le -c_{1}' g(x) + c'_{2}, \qquad x\in\mbbr^{d},
\nonumber
\end{equation}
holds for some $c_{1}', c'_{2}>0$ and a nonnegative $\mcc^{2}$ function $g$ such that $g(x)\to\infty$ as $\|x\|\to\infty$ faster than any polynomial,
where $\mathcal{A}_{\theta}$ denotes the infinitesimal generator of $X$: with writing $b=(b_{k})$ and $S=(S_{kl})$,
\begin{equation}
\mathcal{A}_{\theta} g(x) := \sum_{k}b_{k}(x,\theta)\p_{x_k}g(x) + \frac{1}{2}\sum_{k}\sum_{l}S_{kl}(x,\al)\p_{x_k}\p_{x_l}g(x).
\label{hm:generator}
\end{equation}
Indeed, both conditions follow from \cite[Propositions 1.1, 1.2 and 5.1]{Gob02}:
under Assumption \ref{Ass1+}, $X$ admits a transition density which is positive for every $(x,y,h)\in\mbbr^{d}\times\mbbr^{d}\times(0,1]$, ensuring the topological condition (i)
(see the bounds \eqref{hm:gobet.bound}  below, for which we note that the $\mcc^{1+\gam}$-property assumed in the condition (R)-1 in \cite{Gob02} is not necessary);
the drift condition (ii) can be derived for $g$ equaling $\exp(c''\|x\|^{2})$ or $\exp(c''\|x\|)$ for some $c''>0$ outside a neighborhood of the origin in case of Assumption \ref{Ass1+}(2)(a) or (b), respectively.

\medskip

(2) follows from a standard application of the drift condition. See \cite[Propositions 1.1(1) and 5.1(1)]{Gob02}.
\end{proof}

\subsubsection{Bounding inverse moment}

The next lemma will be used to deduce some moment bounds required later.

\begin{lem}
Under Assumption \ref{Ass1+}, for every $q>0$ we have
\begin{align*}
\sup_{n>2q/d}\mbbe\bigg( \sup_{\alpha\in\overline{\Theta}_{\alpha}}\bigg| \frac{1}{nh_{0}}\sumj S_{j-1}^{-1}(\alpha)\big[(\D_{j}X)^{\otimes2}\big] \bigg|^{-q} \bigg)<\infty.
\end{align*}
\label{hm:lem_inv.moment}
\end{lem}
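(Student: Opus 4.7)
The plan is to reduce the statement to an inverse-moment bound for a sum of squared Brownian-type increments, and then dominate that sum from below by an i.i.d.\ sum whose small-ball probability is computable explicitly. First I would use Assumption \ref{Ass1+}(1), which gives $\lam_{\min}(S^{-1}(x,\al))\ge C_{1}^{-1}$ uniformly, to obtain
\[
\sup_{\al\in\overline{\Theta}_{\al}}\bigg|\frac{1}{nh_{0}}\sumj S_{j-1}^{-1}(\al)[(\D_{j}X)^{\otimes 2}]\bigg|^{-q}
\le C_{1}^{q}\bigg(\frac{1}{n}\sumj\|Z_{j}\|^{2}\bigg)^{-q},
\]
where $Z_{j}:=\D_{j}X/\sqrt{h_{0}}$. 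This eliminates the $\al$-dependence and reduces the statement to bounding $\E[(n^{-1}\sumj\|Z_{j}\|^{2})^{-q}]$ uniformly in $n>2q/d$.

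Under Assumption \ref{Ass1+} the transition density of $X$ admits the Aronson-type Gaussian upper bound $p(h_{0},x,y)\le K_{1}h_{0}^{-d/2}\exp(-K_{2}|y-x|^{2}/h_{0})$ (see \cite[Propositions 1.1--1.2]{Gob02} and the proof of Lemma \ref{hm:lem_pldi-g.exp.ergo}), which rescales to the conditional small-ball bound $\pr(\|Z_{j}\|^{2}\le r\mid\mcf_{t_{j-1}})\le c\,r^{d/2}$ with $c$ independent of $j$, $n$, $\al$, and $X_{t_{j-1}}$. On an extended probability space the standard quantile coupling---using $W_{j}:=F_{\|Z_{j}\|^{2}\mid\mcf_{t_{j-1}}}(\|Z_{j}\|^{2})$, which are i.i.d.\ $U(0,1)$ by successive conditioning---produces an i.i.d.\ sequence $V_{1},\ldots,V_{n}$ with CDF $F_{V}(r):=\min(1,cr^{d/2})$ such that $\|Z_{j}\|^{2}\ge V_{j}$ almost surely. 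The $V_{j}$ have the explicit Beta-type density $f_{V}(r)=c(d/2)r^{d/2-1}\mathbf{1}_{(0,c^{-2/d})}(r)$, and the standard gamma-convolution calculation gives the small-ball bound $\pr(\sumj V_{j}\le r)\le M_{0}^{n}r^{nd/2}/\Gamma(nd/2+1)$ with $M_{0}:=c\Gamma(d/2+1)$.

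Combining with the trivial upper bound $\pr(\sumj V_{j}\le r)\le 1$ and inserting into $\E[(\sumj V_{j})^{-q}]=q\int_{0}^{\infty}r^{-q-1}\pr(\sumj V_{j}\le r)\,dr$, I would split the integral at the balance point $r_{1}:=(\Gamma(nd/2+1)/M_{0}^{n})^{2/(nd)}$ where the two bounds meet; a direct calculation then yields $\E[(\sumj V_{j})^{-q}]\le r_{1}^{-q}(1+q/(nd/2-q))$. For $n>2q/d$ (integer), the prefactor $1+q/(nd/2-q)$ is bounded, and Stirling's formula gives $r_{1}\sim nd/(2eM_{0}^{2/d})$ as $n\to\infty$, so $r_{1}^{-q}\lesssim n^{-q}$ and consequently $\E[(\sumj V_{j}/n)^{-q}]=n^{q}\E[(\sumj V_{j})^{-q}]$ is bounded uniformly in $n>2q/d$.

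The main obstacle is the delicate cancellation at the balance point: the factor $M_{0}^{n}/\Gamma(nd/2+1)$ from the convolution density grows superpolynomially in $n$ when $M_{0}>1$ (the generic case, as the Aronson upper bound is tight only for a pure Gaussian), and any fixed splitting of the integral would fail to absorb it. The balance-point $r_{1}$ is chosen precisely so that $M_{0}^{n}r_{1}^{nd/2}/\Gamma(nd/2+1)=1$, whereupon the spurious constants cancel algebraically and leave only the clean polynomial $r_{1}^{-q}\sim n^{-q}$. The condition $n>2q/d$ enters as the integrability requirement at $r=0$ for the small-$r$ contribution, matching exactly the left-tail exponent $nd/2$ coming from the convolution; an alternative route via the conditional Laplace transform plus Chernoff turns out to lose precisely this exponential-in-$n$ factor, so the coupling-plus-convolution approach is the cleaner one.
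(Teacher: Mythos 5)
Your reduction to $\E\{(n^{-1}\sum_{j}\|Z_{j}\|^{2})^{-q}\}$ via uniform ellipticity matches the paper's first step, and your endgame (dominating the conditional law of $\|Z_{j}\|^{2}$ from below by an i.i.d.\ Beta-type sequence, then running the $n$-fold gamma-convolution small-ball bound with the balance-point splitting and Stirling) is a genuinely different and, where it applies, cleaner route than the paper's: the paper instead proves $\sup_{i}\pr(\sum_{j=1}^{k}\zeta_{i+j}\le\ep)\lesssim\ep^{kd/2}$ for a \emph{fixed} block length $k>2q/d$ by iterated conditioning (following \cite{BhaPap91}), deduces $\sup_{i}\E\{(\sum_{j=1}^{k}\zeta_{i+j})^{-q}\}\lesssim 1$, and then passes to general $n$ by blocking plus Jensen's inequality for $s\mapsto s^{-q}$. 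Your convolution computation is correct and even tracks the $n$-dependence more sharply than the blocking argument needs to.

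However, there is a genuine gap at the pivotal step: you assert that the Aronson-type bound yields $\pr(\|Z_{j}\|^{2}\le r\mid\mcf_{t_{j-1}})\le c\,r^{d/2}$ with $c$ independent of $X_{t_{j-1}}$. Assumption \ref{Ass1+}(2)(a) allows an \emph{unbounded} drift (e.g.\ linear mean reversion), and in that case the upper bound \eqref{hm:gobet.bound} from \cite{Gob02} carries the factor $e^{B_{0}h_{0}\|x\|^{2}}$ with $B_{0}>0$; the conditional small-ball constant is therefore $c\,e^{B_{0}h_{0}\|X_{t_{j-1}}\|^{2}}$, not a deterministic $c$ (see \eqref{hm:gobet.bound+}). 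Without a uniform constant the stochastic-domination/quantile coupling does not produce an i.i.d.\ minorizing sequence $(V_{j})$ — the conditional law of $\|Z_{j}\|^{2}$ depends on the past through this random weight — and the entire convolution computation loses its footing. This is precisely the difficulty the paper's iteration is built to absorb: the indicator $I(\zeta_{i+k-1}\le\ep)$ forces $\|X_{t_{i+k-1}}\|\le\sqrt{\ep h_{0}}+\|X_{t_{i+k-2}}\|$, so the exponential weight can be pushed back one step at a time and finally controlled by $\sup_{t}\E\{\exp(h_{0}B_{0}\|X_{t}\|^{2})\}<\infty$. Your argument is complete as written only under the bounded-drift alternative \ref{Ass1+}(2)(b), where $B_{0}=0$; to cover case (2)(a) you would need to either reproduce an iteration of this kind or otherwise decouple the random weight from the small-ball constant.
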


\begin{proof}
Write $\zeta_{j} = \|h_{0}^{-1/2}\D_{j}X\|^{2}$.
First, observe that the expectation can be bounded from above by
\begin{align}
& \bigg( \sup_{(x,\al)}\lam_{\max}^{q}\left(S(x,\al)\right) \bigg) \E\bigg\{\bigg( \frac{1}{n}\sumj \left\| h_{0}^{-1/2}\D_{j}X \right\|^{2} \bigg)^{-q}\bigg\}
\lesssim \E\bigg\{\bigg( \frac{1}{n}\sumj \zeta_{j} \bigg)^{-q}\bigg\}.
\label{hm:lem_inv.moment-1}
\end{align}
Under Assumption \ref{Ass1+}, we have the following modified Aronson type bound with possibly unbounded drift coefficient for the transition density of $X$,
say $p_{h_{0}}(x,y)$ ($\pr(X_{h_{0}}\in dy|X_{0}=x)=p_{h_{0}}(x,y)dy$):
there exist constants $A,B>1$ and $B_{0}\ge 0$ such that for every $(x,y,h_{0})\in\mbbr^{d}\times\mbbr^{d}\times(0,1]$,
\begin{equation}
\frac{1}{Ah_{0}^{d/2}}e^{-B_{0}h_{0}\|x\|^{2}}\exp\bigg( -\frac{B\|y-x\|^{2}}{h_{0}} \bigg) \le p_{h_{0}}(x,y) 
\le \frac{A}{h_{0}^{d/2}}e^{B_{0}h_{0}\|x\|^{2}}\exp\bigg( -\frac{\|y-x\|^{2}}{h_{0}B} \bigg),
\label{hm:gobet.bound}
\end{equation}
where we can take $B_{0}=0$ especially when the drift $b$ is bounded (see \cite[Proposition 1.2]{Gob02} for details).
The bound \eqref{hm:gobet.bound} implies that the conditional distribution of $h_{0}^{-1/2}\D_{j}X$ given $X_{t_{j-1}}=x$,
which we denote by $y\mapsto \bar{p}_{h_{0}}(y|x)$, satisfies that
\begin{equation}
A^{-1}e^{-B_{0}h_{0}\|x\|^{2}} \exp(-B\|y\|^{2}) \le \bar{p}_{h_{0}}(y|x) \le A e^{B_{0}h_{0}\|x\|^{2}} \exp( -\|y\|^{2}/B).
\nonumber
\end{equation}
It follows that
\begin{equation}
\sup_{y} \bar{p}_{h_{0}}(y|x) \lesssim \exp\left(h_{0} B_{0}\|x\|^{2}\right).
\label{hm:gobet.bound+}
\end{equation}
In what follows, the constant $B_{0}\ge 0$ may change at each appearance, with keeping the rule that $B_{0}=0$ if $b$ is bounded (that is, if the Assumption \ref{Ass1+}(2)(b) holds).

Let $k\in\mbbn$. We will prove that
\begin{align}
\sup_{i\ge 0}\pr\bigg( \sum_{j=1}^{k}\zeta_{i+j} \le \ep \bigg) \lesssim \ep^{kd/2}, \qquad \ep>0.
\label{hm:lem_inv.moment-3}
\end{align}
To this end, we make use the argument of the proof of \cite[Eq.(2.2)]{BhaPap91}, while our conditions are apparently weaker.
Write $\pr_{l}(\cdot)$ for the conditional expectation given $\mcf_{t_{l}}$.
Observe that from \eqref{hm:gobet.bound+} we have a.s.
\begin{align}
\pr_{i+k-1}(\zeta_{i+k}\le\ep) &= \int_{\|y\|\le\sqrt{\ep}} \bar{p}_{h_{0}}(y|X_{t_{i+k-1}})dy
\lesssim \ep^{d/2} \exp\left( h_{0}B_{0}\|X_{t_{i+k-1}}\|^{2}\right).
\nonumber
\end{align}
This in turn implies that
\begin{align}
& I(\zeta_{i+k-1} \le \ep) \pr_{i+k-1}(\zeta_{i+k}\le\ep) \nn\\
&\lesssim \ep^{d/2} I(\zeta_{i+k-1} \le \ep)I\left( \|X_{t_{i+k-1}}\| \le \sqrt{\ep h_{0}} + \|X_{t_{i+k-2}}\| \right) \exp\left( h_{0}B_{0}\|X_{t_{i+k-1}}\|^{2}\right) \nn\\
&\lesssim \ep^{d/2} I(\zeta_{i+k-1} \le \ep) \exp\left( h_{0}B_{0}\|X_{t_{i+k-2}}\|^{2}\right).
\nonumber
\end{align}
Iterating the same manner along with taking the conditional expectations successively, we can deduce
\begin{align}
\pr\bigg( \sum_{j=1}^{k}\zeta_{i+j} \le \ep \bigg) &\le \E\bigg( \prod_{j=1}^{k}I(\zeta_{i+j} \le \ep) \bigg) \nn\\
&= \E\bigg\{ \bigg( \prod_{j=1}^{k-1}I(\zeta_{i+j} \le \ep) \bigg) \pr_{i+k-1}(\zeta_{i+k}\le\ep)\bigg\} \nn\\
&\lesssim \ep^{d/2} \E\bigg\{ \bigg( \prod_{j=1}^{k-2}I(\zeta_{i+j} \le \ep) \bigg) I(\zeta_{i+k-1} \le \ep) \exp\left( h_{0}B_{0}\|X_{t_{i+k-2}}\|^{2}\right) \bigg\} \nn\\
&\lesssim \ep^{d/2} \E\bigg\{ \bigg( \prod_{j=1}^{k-2}I(\zeta_{i+j} \le \ep) \bigg) \exp\left( h_{0}B_{0}\|X_{t_{i+k-2}}\|^{2}\right) \pr_{i+k-2}(\zeta_{i+k-1}\le\ep) \bigg\} \nn\\
&\lesssim (\ep^{d/2})^{2} \E\bigg\{ \bigg( \prod_{j=1}^{k-2}I(\zeta_{i+j} \le \ep) \bigg) \exp\left( h_{0}B_{0}\|X_{t_{i+k-3}}\|^{2}\right) \bigg\} \nn\\
&\lesssim \dots\lesssim \ep^{(k-1)d/2} \, \E\bigg( I(\zeta_{i+1} \le \ep) \exp\left( h_{0}B_{0}\|X_{t_{i}}\|^{2}\right) \bigg) \nn\\
&\lesssim \ep^{kd/2} \sup_{t}\E\left\{ \exp\left( h_{0}B_{0}\|X_{t}\|^{2}\right) \right\} \lesssim \ep^{kd/2},
\nn
\end{align}
the last estimate holding for every $h_{0}$ small enough; again note that we can take $B_{0}=0$ when $b$ is bounded.
Thus we have verified the estimate \eqref{hm:lem_inv.moment-3}, so that
\begin{equation}
\sup_{i\ge 0}\pr\bigg\{\bigg( \sum_{j=1}^{k}\zeta_{i+j} \bigg)^{-q} \ge r \bigg\} \lesssim r^{-kd/(2q)}, \qquad r>0.
\nonumber
\end{equation}
Now, letting $k>2q/d$ we obtain
\begin{equation}
\sup_{i\ge 0} \E\bigg\{\bigg( \sum_{j=1}^{k}\zeta_{i+j} \bigg)^{-q} \bigg\} 
\le 1+\int_{1}^{\infty}r^{-kd/(2q)}dr 
\lesssim 1.
\label{hm:lem_inv.moment-2}
\end{equation}
Having \eqref{hm:lem_inv.moment-2} in hand, we can complete the proof in a similar manner to \cite[Lemma A.1]{FinWei02}.
Let $m:=[n/k]$. Obviously it suffices to consider $n=km$ ($m\in\mbbn$). Write $\lam_{l}=\sum_{j=k(l-1)+1}^{kl}\zeta_{j}$; then $\sumj\zeta_{j}=\sum_{l=1}^{m}\lam_{l}$, and \eqref{hm:lem_inv.moment-2} ensures that $\sup_{l}\E( \lam_{l}^{-q} )\lesssim 1$. By the convexity of the mapping $s\mapsto s^{-q}$ ($s>0$), Jensen's inequality gives
\begin{align}
\E\bigg\{\bigg( \frac{1}{n}\sumj \zeta_{j} \bigg)^{-q}\bigg\}
&= k^{q}\,\E\bigg\{\bigg( \frac{1}{m}\sum_{l=1}^{m} \lam_{l} \bigg)^{-q}\bigg\} \lesssim k^{q},
\nonumber
\end{align}
which combined with \eqref{hm:lem_inv.moment-1} completes the proof.
\end{proof}

\medskip

Now, in order to show \eqref{pldi1} and \eqref{pldi2}, it is enough to check [A1$''$], [A4$'$], and [A6] of \cite{Yos11}; the conditions [B1] and [B2] therein are given in Sections \ref{proof.AN.an} and \ref{proof:thm.bic}, respectively.
The claim \eqref{se:thm.pldi-1} is trivial from the \eqref{pldi1}, \eqref{pldi2}, and the definition of $\tilde{h}'$.
We put $\epsilon_{1}=\epsilon_{0}/2$ in the sequel.

\subsubsection{Proof of \eqref{pldi1}}

We will verify the following conditions: for every $M>0$,
\begin{align}
&\mbbe\bigg(\sup_{\beta}\bigg\|\frac{1}{\sqrt{n}}\p_{\alpha}\tilde{\mbbh}_{n}(\alpha_{0},\beta)\bigg\|^{M}\bigg)<\infty; \label{pldi.al1} \\
&\mbbe\bigg\{\bigg(\sup_{\theta}n^{\epsilon_{1}}\big|\tilde{\mbby}_{n}^{1}(\alpha,\beta)-\tilde{\mbby}_{0}^{1}(\alpha)\big|\bigg)^{M}\bigg\}<\infty; \label{pldi.al2} \\
&\mbbe\bigg\{\bigg(\frac{1}{n}\sup_{\theta}\big\|\p_{\alpha}^{3}\tilde{\mbbh}_{n}(\theta)\big\|\bigg)^{M}\bigg\}<\infty; \label{pldi.al3} \\
&\mbbe\bigg\{\sup_{\beta}\bigg(n^{\epsilon_{1}}\bigg\|-\frac{1}{n}\p_{\alpha}^{2}\tilde{\mbbh}_{n}(\alpha_{0},\beta)-\tilde{\Gam}_{1,0}\bigg\|\bigg)^{M}\bigg\}<\infty. \label{pldi.al4}
\end{align}
The conditions \eqref{pldi.al1} to \eqref{pldi.al4} imply [A1$''$] and [A6].
The left-hand side of \eqref{pldi.al2} satisfies
\begin{align}
&\mbbe\bigg\{\bigg(\sup_{\theta}n^{\epsilon_{1}}\big|\tilde{\mbby}_{n}^{1}(\alpha,\beta)-\tilde{\mbby}_{0}^{1}(\alpha)\big|\bigg)^{M}\bigg\} \nn\\
&\lesssim\mbbe\bigg\{\bigg(\sup_{\theta}n^{\epsilon_{1}}\bigg|\frac{1}{2n}\sumj\log\big|S_{j-1}^{-1}(\alpha)S_{j-1}(\alpha_{0})\big|-\frac{1}{2}\int_{\mbbr^{d}}\log\big|S^{-1}(x,\alpha)S(x,\alpha_{0})\big|\pi(dx)\bigg|\bigg)^{M}\bigg\} \nn\\
&\quad+\mbbe\bigg\{\bigg(\sup_{\alpha}n^{\epsilon_{1}}\bigg|\log\frac{h(\alpha)/h_{0}}{\frac{\tau}{d}\int_{\mbbr^{d}}\tr\big(S^{-1}(x,\alpha)S(x,\alpha_{0})\big)\pi(dx)}\bigg|\bigg)^{M}\bigg\} \nn\\
&\quad+\mbbe\bigg\{\bigg(\sup_{\theta}n^{\epsilon_{1}}\bigg|\frac{1}{n}\sumj S_{j-1}^{-1}(\alpha)[\D_{j}X,b_{j-1}(\theta)]\bigg|\bigg)^{M}\bigg\}+\mbbe\bigg\{\bigg(\sup_{\theta}n^{\epsilon_{1}}\bigg|\frac{1}{2}h(\alpha)\bigg(\frac{1}{n}\sumj S_{j-1}^{-1}[b_{j-1}(\theta)^{\otimes2}]\bigg)\bigg|\bigg)^{M}\bigg\} \nn\\
&\lesssim\mbbe\bigg\{\bigg(\sup_{\theta}n^{\epsilon_{1}}\bigg|\frac{1}{2n}\sumj\log\big|S_{j-1}^{-1}(\alpha)S_{j-1}(\alpha_{0})\big|-\frac{1}{2}\int_{\mbbr^{d}}\log\big|S^{-1}(x,\alpha)S(x,\alpha_{0})\big|\pi(dx)\bigg|\bigg)^{M}\bigg\} \nn\\
&\quad+\mbbe\bigg[\bigg\{\sup_{\alpha}n^{\epsilon_{1}}\bigg|\frac{h(\alpha)/h_{0}}{\frac{\tau}{d}\int_{\mbbr^{d}}\tr\big(S^{-1}(x,\alpha)S(x,\alpha_{0})\big)\pi(dx)}\bigg|^{-1}\times
\bigg|\frac{h(\alpha)/h_{0}}{\frac{\tau}{d}\int_{\mbbr^{d}}\tr\big(S^{-1}(x,\alpha)S(x,\alpha_{0})\big)\pi(dx)}-1\bigg|\bigg\}^{M}\bigg] \nn\\
&\quad+\mbbe\bigg[\bigg\{\sup_{\alpha}n^{\epsilon_{1}}\bigg|\frac{h(\alpha)/h_{0}}{\frac{\tau}{d}\int_{\mbbr^{d}}\tr\big(S^{-1}(x,\alpha)S(x,\alpha_{0})\big)\pi(dx)}-1\bigg|\bigg\}^{M}\bigg] \nn\\
&\quad+1 \nn\\
&\lesssim\mbbe\bigg\{\bigg(\sup_{\theta}n^{\epsilon_{1}}\bigg|\frac{1}{2n}\sumj\log\big|S_{j-1}^{-1}(\alpha)S_{j-1}(\alpha_{0})\big|-\frac{1}{2}\int_{\mbbr^{d}}\log\big|S^{-1}(x,\alpha)S(x,\alpha_{0})\big|\pi(dx)\bigg|\bigg)^{M}\bigg\} \nn\\
&\quad+\mbbe\bigg[\bigg\{\sup_{\alpha}n^{\epsilon_{1}}\bigg|\frac{h(\alpha)}{h_{0}}\bigg|^{-1}\bigg|\frac{h(\alpha)}{h_{0}}-\frac{\tau}{d}\int_{\mbbr^{d}}\tr\big(S^{-1}(x,\alpha)S(x,\alpha_{0})\big)\pi(dx)\bigg|\bigg\}^{M}\bigg] \nn\\
&\quad+\mbbe\bigg[\bigg\{\sup_{\alpha}n^{\epsilon_{1}}\bigg|\frac{h(\alpha)/h_{0}}{\frac{\tau}{d}\int_{\mbbr^{d}}\tr\big(S^{-1}(x,\alpha)S(x,\alpha_{0})\big)\pi(dx)}-1\bigg|\bigg\}^{M}\bigg] \nn\\
&\quad+1 \nn\\
&\leq\mbbe\bigg\{\bigg(\sup_{\theta}n^{\epsilon_{1}}\bigg|\frac{1}{2n}\sumj\log\big|S_{j-1}^{-1}(\alpha)S_{j-1}(\alpha_{0})\big|-\frac{1}{2}\int_{\mbbr^{d}}\log\big|S^{-1}(x,\alpha)S(x,\alpha_{0})\big|\pi(dx)\bigg|\bigg)^{M}\bigg\} \nn\\
&\quad+\mbbe\bigg(\sup_{\alpha}\bigg|\frac{h(\alpha)}{h_{0}}\bigg|^{-2M}\bigg)^{1/2}\mbbe\bigg\{\bigg(\sup_{\alpha}n^{\epsilon_{1}}\bigg|\frac{h(\alpha)}{h_{0}}-\frac{\tau}{d}\int_{\mbbr^{d}}\tr\big(S^{-1}(x,\alpha)S(x,\alpha_{0})\big)\pi(dx)\bigg|\bigg)^{2M}\bigg\}^{1/2} \nn\\
&\quad+\mbbe\bigg[\bigg\{\sup_{\alpha}n^{\epsilon_{1}}\bigg|\frac{h(\alpha)/h_{0}}{\frac{\tau}{d}\int_{\mbbr^{d}}\tr\big(S^{-1}(x,\alpha)S(x,\alpha_{0})\big)\pi(dx)}-1\bigg|\bigg\}^{M}\bigg] \nn\\
&\quad+1, \nn
\end{align}
where in the second step we used \eqref{se:aux.4}, Lemma \ref{hm:lem_pldi-g.exp.ergo}(2), 
the inequality $|\log x|\le (1+ |x|^{-1})|x-1|$ for $x>0$ for the second term,
and H\"{o}lder's inequality for the third term and fourth term.
As in \cite[Lemma 4.3]{Mas13}, Lemma \ref{hm:lem_pldi-g.exp.ergo} ensures
\begin{align*}
&\mbbe\bigg\{\bigg(\sup_{\theta}n^{\epsilon_{1}}\bigg|\frac{1}{2n}\sumj\log\big|S_{j-1}^{-1}(\alpha)S_{j-1}(\alpha_{0})\big|-\frac{1}{2}\int_{\mbbr^{d}}\log\big|S^{-1}(x,\alpha)S(x,\alpha_{0})\big|\pi(dx)\bigg|\bigg)^{M}\bigg\}<\infty, \\
&\mbbe\bigg[\bigg\{\sup_{\alpha}n^{\epsilon_{1}}\bigg|\frac{h(\alpha)/h_{0}}{\frac{\tau}{d}\int_{\mbbr^{d}}\tr\big(S^{-1}(x,\alpha)S(x,\alpha_{0})\big)\pi(dx)}-1\bigg|\bigg\}^{M}\bigg]<\infty,\\
&\mbbe\bigg\{\bigg(\sup_{\alpha}n^{\epsilon_{1}}\bigg|\frac{h(\alpha)}{h_{0}}-\frac{\tau}{d}\int_{\mbbr^{d}}\tr\big(S^{-1}(x,\alpha)S(x,\alpha_{0})\big)\pi(dx)\bigg|\bigg)^{2M}\bigg\}<\infty.
\end{align*}
Further, Lemma \ref{hm:lem_inv.moment} implies
\begin{align*}
\mbbe\left(\sup_{\alpha}\left|\frac{h(\alpha)}{h_{0}}\right|^{-2M}\right)<\infty.
\end{align*}
Hence, \eqref{pldi.al2} is established.
In a similar way, we have
\begin{align*}
&\mbbe\bigg(\sup_{\beta}\bigg\|\frac{1}{\sqrt{n}}\p_{\alpha}\tilde{\mbbh}_{n}(\alpha_{0},\beta)\bigg\|^{M}\bigg) \\
&\lesssim1+\mbbe\bigg(\bigg\|\frac{1}{2\sqrt{n}}\sumj\tr\Big(S_{j-1}^{-1}(\alpha_{0})\big(\p_{\alpha}S_{j-1}(\alpha_{0})\big)\Big)+\frac{1}{h(\alpha_{0})}\frac{1}{2\sqrt{n}}\sumj \p_{\alpha}S_{j-1}^{-1}(\alpha_{0})\big[(\D_{j}X)^{2}\big]\bigg\|^{M}\bigg) \\
&\lesssim1+\mbbe\bigg\{\bigg\|\frac{h_{0}}{h(\alpha_{0})}\frac{1}{2\sqrt{n}}\sumj\tr\Big(S_{j-1}^{-1}(\alpha_{0})\big(\p_{\alpha}S_{j-1}(\alpha_{0})\big)\Big)\bigg(\frac{h(\alpha_{0})}{h_{0}}-\tau\bigg)\bigg\|^{M}\bigg\} \\
&\leq 1+\mbbe\bigg(\bigg|\frac{h_{0}}{h(\alpha_{0})}\bigg|^{3M}\bigg)^{1/3}\mbbe\bigg(\bigg\|\frac{1}{2n}\sumj\tr\Big(S_{j-1}^{-1}(\alpha_{0})\big(\p_{\alpha}S_{j-1}(\alpha_{0})\big)\Big)\bigg\|^{3M}\bigg)^{1/3} \\
&\quad\times\mbbe\bigg(\bigg|\frac{1}{\sqrt{n}h_{0}d}\sumj S_{j-1}^{-1}(\alpha_{0})\big[(\D_{j}X)^{\otimes2}-\tau h_{0} S_{j-1}(\alpha_{0})\big]\bigg|^{3M}\bigg)^{1/3} \\
&<\infty, \\
&\mbbe\bigg\{\sup_{\beta}\bigg(n^{\epsilon_{1}}\bigg\|-\frac{1}{n}\p_{\alpha}^{2}\tilde{\mbbh}_{n}(\alpha_{0},\beta)-\tilde{\Gam}_{1,0}\bigg\|\bigg)^{M}\bigg\} \\
&\lesssim1+\mbbe\bigg[\bigg\{n^{\epsilon_{1}}\bigg\|\frac{1}{2n}\sumj\p_{\alpha}^{2}\Big(\log\big|S_{j-1}(\alpha_{0})\big|\Big)+\frac{1}{2}\bigg(\frac{1}{n}\sumj\p_{\alpha}^{2}S_{j-1}^{-1}(\alpha_{0})\big[S_{j-1}(\alpha_{0})\big]\bigg) \\
&\qquad\quad-\frac{1}{2d}\bigg(\frac{1}{n}\sumj\p_{\alpha}S_{j-1}^{-1}(\alpha_{0})\big[S_{j-1}(\alpha_{0})\big]\bigg)^{\otimes2}-\tilde{\Gam}_{1,0}\bigg\|\bigg\}^{M}\bigg] \\
&\quad+\mbbe\bigg[\bigg\{n^{\epsilon_{1}}\bigg\|\frac{1}{2}\bigg(\frac{1}{n\tau h_{0}}\sumj\p_{\alpha}^{2}S_{j-1}^{-1}(\alpha_{0})\big[(\D_{j}X)^{\otimes2}\big]\bigg)-\frac{1}{2d}\bigg(\frac{1}{n\tau h_{0}}\sumj\p_{\alpha}S_{j-1}^{-1}(\alpha_{0})\big[(\D_{j}X)^{\otimes2}\big]\bigg)^{\otimes2} \\
&\qquad\quad-\frac{d}{2}\p_{\alpha}^{2}\big(\log h(\alpha)\big)\bigg\|\bigg\}^{M}\bigg] \\
&\lesssim1+\mbbe\bigg[\bigg\{n^{\epsilon_{1}}\bigg\|\frac{1}{2}\bigg(\frac{1}{n\tau h_{0}}\sumj\p_{\alpha}^{2}S_{j-1}^{-1}(\alpha_{0})\big[(\D_{j}X)^{\otimes2}\big]\bigg)-\frac{1}{2d}\bigg(\frac{1}{n\tau h_{0}}\sumj\p_{\alpha}S_{j-1}^{-1}(\alpha_{0})\big[(\D_{j}X)^{\otimes2}\big]\bigg)^{\otimes2} \\
&\qquad\quad-\frac{d}{2}\bigg(\frac{1}{nd}\sumj\p_{\alpha}^{2}S_{j-1}^{-1}(\alpha_{0})\big[(\D_{j}X)^{\otimes2}\big]\bigg)\frac{1}{h(\alpha)} \nn\\
&\qquad \quad
+\frac{d}{2}\bigg(\frac{1}{nd}\sumj\p_{\alpha}S_{j-1}^{-1}(\alpha_{0})\big[(\D_{j}X)^{\otimes2}\big]\bigg)^{\otimes2}\bigg(\frac{1}{h(\alpha)}\bigg)^{2}\bigg\|\bigg\}^{M}\bigg] \\
&\leq 1+\mbbe\bigg[\bigg\{n^{\epsilon_{1}}\bigg\|\frac{h_{0}}{h(\alpha)}\bigg(\frac{1}{nh_{0}}\sumj\p_{\alpha}^{2}S_{j-1}^{-1}(\alpha_{0})\big[(\D_{j}X)^{\otimes2}\big]\bigg)\bigg(\frac{h(\alpha)}{\tau h_{0}}-1\bigg)\bigg\|\bigg\}^{M}\bigg] \\
&\quad+\mbbe\bigg[\bigg\{n^{\epsilon_{1}}\bigg\|\bigg(\frac{h_{0}}{h(\alpha)}\bigg)^{2}\bigg(\frac{1}{nh_{0}}\sumj\p_{\alpha}^{2}S_{j-1}^{-1}(\alpha_{0})\big[(\D_{j}X)^{\otimes2}\big]\bigg)^{\otimes2}\bigg\{\bigg(\frac{h(\alpha)}{\tau h_{0}}-1\bigg)^{2}+2\bigg(\frac{h(\alpha)}{\tau h_{0}}-1\bigg)\bigg\}\bigg\|\bigg\}^{M}\bigg] \\
&<\infty, \\
&\mbbe\bigg\{\bigg(\frac{1}{n}\sup_{\theta}\big\|\p_{\alpha}^{3}\tilde{\mbbh}_{n}(\theta)\big\|\bigg)^{M}\bigg\} \\
&\lesssim1+\mbbe\bigg\{\big(\sup_{\alpha}\big\|\p_{\alpha}^{3}\big(\log h(\alpha)\big)\big\|\big)^{M}\bigg\} \\
&\lesssim1+\mbbe\bigg[\bigg\{\sup_{\alpha}\bigg\|\frac{1}{nh_{0}d}\sumj\p_{\alpha}^{3}S_{j-1}^{-1}(\alpha)\big[(\D_{j}X)^{\otimes2}\big]\bigg\|\bigg|\frac{h_{0}}{h(\alpha)}\bigg|\bigg\}^{M}\bigg] \\
&\quad+\mbbe\bigg[\bigg\{\sup_{\alpha}\bigg\|\frac{1}{nh_{0}d}\sumj\p_{\alpha}^{2}S_{j-1}^{-1}(\alpha)\big[(\D_{j}X)^{\otimes2}\big]\bigg\|\bigg\|\frac{1}{nh_{0}d}\sumj\p_{\alpha}S_{j-1}^{-1}(\alpha)\big[(\D_{j}X)^{\otimes2}\big]\bigg\|\bigg|\frac{h_{0}}{h(\alpha)}\bigg|^{2}\bigg\}^{M}\bigg] \\
&\quad+\mbbe\bigg[\bigg\{\sup_{\alpha}\bigg\|\frac{1}{nh_{0}d}\sumj\p_{\alpha}S_{j-1}^{-1}(\alpha)\big[(\D_{j}X)^{\otimes2}\big]\bigg\|^{3}\bigg|\frac{h_{0}}{h(\alpha)}\bigg|^{3}\bigg\}^{M}\bigg] \\
&<\infty.
\end{align*}
The proofs of the conditions \eqref{pldi.al1}, \eqref{pldi.al3}, and \eqref{pldi.al4} are complete. 
The tuning-parameter condition [A4$'$] can be verified exactly in the same way as in \cite[Section 6]{Yos11}.
We thus obtain \eqref{pldi1}.

\subsubsection{Proof of \eqref{pldi2}}

We will prove the following conditions: for every $M>0$,
\begin{align}
&\sup_{n}\mbbe\bigg(\bigg\|\frac{1}{\sqrt{n\tau h_{0}}}\p_{\beta}\tilde{\mbbh}_{n}(\tz)\bigg\|^{M}\bigg)<\infty; \label{pldi.be1} \\
&\sup_{n}\mbbe\bigg\{\bigg(\sup_{\beta}(n\tau h_{0})^{\epsilon_{1}}\big|\tilde{\mbby}_{n}^{2}(\beta;\alpha_{0})-\tilde{\mbby}_{0}^{2}(\beta)\big|\bigg)^{M}\bigg\}<\infty; \label{pldi.be2} \\
&\sup_{n}\mbbe\bigg\{\bigg(\frac{1}{n\tau h_{0}}\sup_{\beta}\big\|\p_{\beta}^{3}\tilde{\mbbh}_{n}(\alpha_{0},\beta)\big\|\bigg)^{M}\bigg\}<\infty; \label{pldi.be3} \\
&\sup_{n}\mbbe\bigg\{\bigg((n\tau h_{0})^{\epsilon_{1}}\bigg\|-\frac{1}{n\tau h_{0}}\p_{\beta}^{2}\tilde{\mbbh}_{n}(\tz)-\tilde{\Gam}_{2,0}\bigg\|\bigg)^{M}\bigg\}<\infty. \label{pldi.be4}
\end{align}
We have
\begin{align*}
&\left\|\frac{1}{\sqrt{n\tau h_{0}}}\p_{\beta}\tilde{\mbbh}_{n}(\tz)\right\| \\
&\lesssim\left\|\frac{1}{\sqrt{n\tau h_{0}}}\sumj S_{j-1}^{-1}(\alpha_{0})\big[\D_{j}X-\tau h_{0} b_{j-1}(\tz), \p_{\beta}b_{j-1}(\tz)\big]\right\| \\
&\quad+\sqrt{\frac{nh_{0}}{\tau}}\left|\frac{1}{nh_{0}d}\sumj S_{j-1}^{-1}(\alpha_{0})\big[(\D_{j}X)^{\otimes2}-\mbbe_{j-1}\{(\D_{j}X)^{\otimes2}\}\big]\right|\left\|\frac{1}{n}\sumj S_{j-1}^{-1}(\alpha_{0})\big[b_{j-1}(\tz), \p_{\beta}b_{j-1}(\tz)\big]\right\| \\
&\quad+\sqrt{\frac{nh_{0}}{\tau}}\left|\frac{1}{nh_{0}d}\sumj S_{j-1}^{-1}(\alpha_{0})\big[\mbbe_{j-1}\{(\D_{j}X)^{\otimes2}\}-\tau h_{0} S_{j-1}(\alpha_{0})\big]\right|\left\|\frac{1}{n}\sumj S_{j-1}^{-1}(\alpha_{0})\big[b_{j-1}(\tz), \p_{\beta}b_{j-1}(\tz)\big]\right\|.
\end{align*}
We can deduce \eqref{pldi.be1} from using Lemma 8(b) of \cite{Yos11}, Burkholder's inequality, and H\"{o}lder's inequality.
From Lemmas 8(a) and 9 of \cite{Yos11}, we can show the following inequalities in a similar way as the proof of \eqref{pldi.be1}: 
\begin{align*}
&\mbbe\bigg\{\bigg(\sup_{\beta}(n\tau h_{0})^{\epsilon_{1}}\big|\tilde{\mbby}_{n}^{2}(\beta;\alpha_{0})-\tilde{\mbby}_{0}^{2}(\beta)\big|\bigg)^{M}\bigg\} \\
&\lesssim1+\tau^{\epsilon_{1}}\mbbe\bigg\{\bigg(\sup_{\beta}(nh_{0})^{\epsilon_{1}}\bigg|\frac{1}{nh_{0}}\sumj\Big(S_{j-1}^{-1}(\alpha_{0})\big[\D_{j}X,b_{j-1}(\alpha_{0},\beta)-b_{j-1}(\tz)\big] \\
&\qquad\quad-\frac{h_{0}}{2}S_{j-1}^{-1}(\alpha_{0})\big[b_{j-1}(\alpha_{0},\beta)^{\otimes2}-b_{j-1}(\tz)^{\otimes2}\big]\Big)-\tilde{\mbby}_{0}^{2}(\beta)\bigg|\bigg)^{M}\bigg\}<\infty, \\
&\mbbe\bigg\{\bigg(\frac{1}{n\tau h_{0}}\sup_{\beta}\big\|\p_{\beta}^{3}\tilde{\mbbh}_{n}(\alpha_{0},\beta)\big\|\bigg)^{M}\bigg\}<\infty, \\
&\mbbe\bigg\{\bigg((n\tau h_{0})^{\epsilon_{1}}\bigg\|-\frac{1}{n\tau h_{0}}\p_{\beta}^{2}\tilde{\mbbh}_{n}(\tz)-\tilde{\Gam}_{2,0}\bigg\|\bigg)^{M}\bigg\} \\
&\lesssim1+\tau^{\epsilon_{1}}\mbbe\bigg\{\bigg((nh_{0})^{\epsilon_{1}}\bigg\|\frac{1}{n}\sumj S_{j-1}^{-1}(\alpha_{0})\big[\p_{\beta}b_{j-1}(\tz), \p_{\beta}b_{j-1}(\tz)\big]-\tilde{\Gam}_{2,0}\bigg\|\bigg)^{M}\bigg\} \\
&\quad+\tau^{\epsilon_{1}}\mbbe\bigg\{\bigg((nh_{0})^{\epsilon_{1}}\bigg\|\bigg(\frac{h(\alpha_{0})}{\tau h_{0}}-1\bigg)\frac{1}{n}\sumj S_{j-1}^{-1}(\alpha_{0})\big[b_{j-1}(\tz), \p_{\beta}^{2}b_{j-1}(\tz)\big]\bigg\|\bigg)^{M}\bigg\}<\infty.
\end{align*}
Hence, we have established \eqref{pldi.be2} to \eqref{pldi.be4} as well.
Finally, the tuning-parameter condition [A4$'$] can be verified as before, completing the proof of \eqref{pldi2}.

\subsection{Proof of Theorem \ref{se:bic}} \label{proof:thm.bic}

Let
\begin{align*}
\tilde{\mbbz}_{n}(u)&=\exp\left\{ \tilde{\mbbh}_{n}(\tz+D_{n}^{-1}u)-\tilde{\mbbh}_{n}(\tz) \right\}, \\
\tilde{\mbbz}_{n}^{0}(u)&=\exp\left(\tilde{\D}_{n}[u]-\frac{1}{2}\diag(\tilde{\Gam}_{1,0},\tau\tilde{\Gam}_{2,0})[u,u]\right)
\end{align*}
and $\mathbb{U}_{n}(\tz)=\{u\in\mbbr^{p};\tz+D_{n}^{-1}u\in\Theta\}$.
In what follows, we deal with the zero-extended version of $\tilde{\mbbz}_{n}$
and use the same notation:
$\tilde{\mbbz}_{n}$ vanishes outside $\mathbb{U}_{n}(\tz)$, so that
\begin{align*}
\int_{\mbbr^{p}\setminus\mathbb{U}_{n}(\tz)}\tilde{\mbbz}_{n}(u)du=0.
\end{align*}

\medskip

\paragraph{{\it Proof of \eqref{se:thm.bic1}}}
By the change of variable $\theta=\tz+D_{n}^{-1}u$, the modified marginal quasi-log likelihood function (recall the definition \eqref{hm:mmqlf_def}) satisfies the equation
\begin{align*}
\mfL_{n}&=\tilde{\mbbh}_{n}(\tz)-\frac{1}{2}p_{\alpha}\log n-\frac{1}{2}p_{\beta}\log(nh_{0})+\log\left(\int_{\mathbb{U}_{n}(\tz)}\tilde{\mbbz}_{n}(u)\mathfrak{p}(\tz+D_{n}^{-1}u)du\right).
\end{align*}
If $\int_{\mathbb{U}_{n}(\tz)}\big|\tilde{\mbbz}_{n}(u)\mathfrak{p}(\tz+D_{n}^{-1}u)-\tilde{\mbbz}_{n}^{0}(u)\mathfrak{p}(\tz)\big|du$ converges to 0 in probability, we obtain
\begin{align*}
\mfL_{n}
&=\tilde{\mbbh}_{n}(\tz)-\frac{1}{2}p_{\alpha}\log n-\frac{1}{2}p_{\beta}\log(nh_{0})+\log\left(\int_{\mathbb{U}_{n}(\tz)}\tilde{\mbbz}_{n}^{0}(u)\mathfrak{p}(\tz)du\right)+o_{p}(1) \\
&=\tilde{\mbbh}_{n}(\tz)-\frac{1}{2}p_{\alpha}\log n-\frac{1}{2}p_{\beta}\log(nh_{0})+\log \mathfrak{p}(\tz)+\frac{p}{2}\log(2\pi) \\
&\quad-\frac{1}{2}\log\big|\tilde{\Gam}_{1,0}\big|-\frac{1}{2}\log\big|\tau\tilde{\Gam}_{2,0}\big|+\frac{1}{2}\diag(\tilde{\Gam}_{1,0},\tau\tilde{\Gam}_{2,0})^{-1}\big[\tilde{\D}_{n}^{\otimes2}\big]+o_{p}(1).
\end{align*}
We will show that 
\begin{align*}
\int_{\mathbb{U}_{n}(\tz)}\left|\tilde{\mbbz}_{n}(u)\mathfrak{p}(\tz+D_{n}^{-1}u)-\tilde{\mbbz}_{n}^{0}(u)\mathfrak{p}(\tz)\right|du\cip0.
\end{align*}
To this end, it suffices to verify the conditions (\ref{appendix.thm1}) to (\ref{appendix.thm6}) in Theorem \ref{hm:thm.qbic}.
We have seen in the proof of Theorem \ref{hm:thm2.diffusion.an} that \eqref{appendix.thm1} holds.
The conditions (\ref{appendix.thm2}) to (\ref{appendix.thm4}) can be deduced in a similar manner as in Section \ref{proof.AN.an}.
Since \eqref{se:aux.2}, \eqref{hm:aux.1}, and \eqref{se:aux.3} ensure
\begin{align}
\tilde{\mbby}_{n}^{1}(\alpha,\beta)
&= O_{p}^{\ast}(h_{0})
+\frac{1}{2}\bigg[ \int_{\mbbr^{d}}\log\big|S^{-1}(x,\alpha)S(x,\alpha_{0})\big|\pi(dx)\nn\\
&{}\qquad -d\bigg\{
\log\bigg( \tau\int_{\mbbr^{d}}\tr\big(S^{-1}(x,\al)S(x,\alpha_{0})\big)\pi(dx))\bigg)
-\log\left(\tau d)\right)
\bigg\}\bigg],\nn\\
\tilde{\mbby}_{n}^{2}(\beta;\alpha_{0}) 
&=O_{p}^{\ast}(\sqrt{h_{0}})-\frac{\tau}{2}\int_{\mbbr^{d}}S^{-1}(x,\alpha_{0})\big[\big(b(x,\alpha_{0},\beta)-b(x,\tz)\big)^{\otimes2}\big]\pi(dx), \nn
\end{align}
we obtain
\begin{align*}
(\sqrt{n})^{q}\sup_{\theta}|\tilde{\mbby}_{n}^{1}(\alpha,\beta)-\tilde{\mbby}_{0}^{1}(\alpha)|&=O_{p}\big((n^{q}h_{0}^{2})^{1/2}\big), \\
(\sqrt{nh_{0}})^{q}\sup_{\beta}|\tilde{\mbby}_{n}^{2}(\beta;\alpha_{0})-\tilde{\mbby}_{0}^{2}(\beta)|&=O_{p}\big((n^{q}h_{0}^{1+q})^{1/2}\big).
\end{align*}
Therefore (\ref{appendix.thm5}) holds for $q\in(0,1)$.
As for \eqref{appendix.thm6}, recall that in Section \ref{hm:proof.consistency} we have seen that the function $\tilde{\mbby}_{0}^{1}$ admits a unique maximum $0$ only at $\al_{0}$.
Since $\Theta_{\al}$ has a bounded closure and the function $\tilde{\mbby}_{0}^{1}$ is twice continuously differentiable with respect to $\al$ under the integration sign, with the associated Hessian matrix $-\p_{\al}^{2}\tilde{\mbby}_{0}^{1}(\al_{0})=\tilde{\Gam}_{1,0}$ being positive definite at $\al_{0}$, we see that there exists a constant $\chi_{1}>0$ such that
\begin{equation}
\tilde{\mbby}_{0}^{1}(\alpha) \le -\chi_{1}\|\al-\al_{0}\|^{2}
\label{hm:Y1_iden}
\end{equation}
for all $\alpha\in\Theta_{\alpha}$.
Exactly in the same way, there exists a constant $\chi_{2}>0$ such that
\begin{equation}
\tilde{\mbby}_{0}^{2}(\beta) \le -\chi_{2}\|\beta-\beta_{0}\|^{2}
\label{se:Y2_iden}
\end{equation}
for all $\beta\in\Theta_{\beta}$. Then \eqref{appendix.thm6} follows from \eqref{hm:Y1_iden} and \eqref{se:Y2_iden}, completing the proof of (\ref{se:thm.bic1}).

\medskip

\paragraph{{\it Proof of \eqref{se:thm.bic2}}}
By the Taylor expansion, we obtain
\begin{align*}
\tilde{\D}_{n}&=\p_{\theta}\tilde{\mbbh}_{n}(\tilde{\theta}_{n})+\big(\diag(\tilde{\Gam}_{1,0},\tau\tilde{\Gam}_{2,0})+o_{p}(1)\big)\big[D_{n}(\tilde{\theta}_{n}-\tz)\big] \\
&=\big(\diag(\tilde{\Gam}_{1,0},\tau\tilde{\Gam}_{2,0})+o_{p}(1)\big)\big[D_{n}(\tilde{\theta}_{n}-\tz)\big].
\end{align*}
Therefore, $D_{n}(\tilde{\theta}_{n}-\tz)=\diag(\tilde{\Gam}_{1,0},\tau\tilde{\Gam}_{2,0})^{-1}\tilde{\D}_{n}+o_{p}(1)$ and
\begin{align*}
\tilde{\mbbh}_{n}(\tz)&=\tilde{\mbbh}_{n}(\tilde{\theta}_{n})-\frac{1}{2}\diag(\tilde{\Gam}_{1,0},\tau\tilde{\Gam}_{2,0})\big[\big(D_{n}(\tilde{\theta}_{n}-\tz)\big)^{\otimes2}\big]+o_{p}(1) \\
&=\tilde{\mbbh}_{n}(\tilde{\theta}_{n})-\frac{1}{2}\diag(\tilde{\Gam}_{1,0},\tau\tilde{\Gam}_{2,0})^{-1}\big[\tilde{\D}_{n}^{\otimes2}\big]+o_{p}(1).
\end{align*}
Then \eqref{se:thm.bic2} follows from the equations $\tilde{h}=\tau h_{0}+o_{p}(1)$, $\mathfrak{p}(\tilde{\theta}_{n})=\mathfrak{p}(\tz)+o_{p}(1)$, $-n^{-1}\p_{\alpha}^{2}\tilde{\mbbh}_{n}(\tilde{\theta}_{n})=\tilde{\Gam}_{1,0}+o_{p}(1)$, and $-(n\tilde{h})^{-1}$ $\p_{\beta}^{2}\tilde{\mbbh}_{n}(\tilde{\theta}_{n})=\tilde{\Gam}_{2,0}+o_{p}(1)$:
\begin{align*}
\mfL_{n}&=\tilde{\mbbh}_{n}(\tilde{\theta}_{n})-\frac{1}{2}p_{\alpha}\log n-\frac{1}{2}p_{\beta}\log n\tilde{h}+\log \mathfrak{p}(\tz)+\frac{p}{2}\log(2\pi) \\
&\quad-\frac{1}{2}\log\left|-\frac{1}{n}\p_{\alpha}^{2}\tilde{\mbbh}_{n}(\tilde{\theta}_{n})\right|-\frac{1}{2}\log\left|-\frac{1}{n\tilde{h}}\p_{\beta}^{2}\tilde{\mbbh}_{n}(\tilde{\theta}_{n})\right|+o_{p}(1).
\end{align*}

\subsection{Proof of Theorem \ref{se:model.consis}} \label{proof:thm.model.consis}
We only prove Theorem \ref{se:model.consis}(1) because Theorem \ref{se:model.consis}(2) can be handled analogously to (1) and \cite[Theorem 5.5]{EguMas18a}. 

If both $m_{1}\neq m_{1,0}$ and $m_{2}\neq m_{2,0}$ hold, we have
\begin{align}
\mathbb{P}\left(\mbic^{(m_{1,0},m_{2,0})}-\mbic^{(m_{1},m_{2})}\geq0\right)&\leq\mathbb{P}\left(\mbic^{(m_{1,0},m_{2,0})}-\mbic^{(m_{1},m_{2,0})}\geq0\right) \nn\\
&\qquad+\mathbb{P}\left(\mbic^{(m_{1},m_{2,0})}-\mbic^{(m_{1},m_{2})}\geq0\right). \label{se:mod.consis1}
\end{align} 
By the Taylor expansion, we obtain
\begin{align*}
\mbbh_{n}^{(m_{1,0},m_{2,0})}(\tilde{\theta}_{m_{1,0},m_{2,0},n})&=\mbbh_{n}^{(m_{1,0},m_{2,0})}(\theta_{m_{1,0},m_{2,0},0})+O_{p}(1), \\
\mbbh_{n}^{(m_{1},m_{2,0})}(\tilde{\theta}_{m_{1},m_{2,0},n})&=\mbbh_{n}^{(m_{1},m_{2,0})}(\theta_{m_{1},m_{2,0},0})+O_{p}(1). 
\end{align*} 
Since each candidate model includes the true model, $\mbbh_{n}^{(m_{1,0},m_{2,0})}(\theta_{m_{1,0},m_{2,0},0})=\mbbh_{n}^{(m_{1},m_{2,0})}(\theta_{m_{1},m_{2,0},0})$.
Moreover, the definition of $m_{1,0}$ implies that $p_{\alpha_{m_{1,0}}}<p_{\alpha_{m_{1}}}$.
Thus, we have
\begin{align}
&\mathbb{P}\left(\mbic^{(m_{1,0},m_{2,0})}-\mbic^{(m_{1},m_{2,0})}\geq0\right) \nn\\
&=\mathbb{P}\bigg\{-2\mbbh_{n}^{(m_{1,0},m_{2,0})}(\tilde{\theta}_{m_{1,0},m_{2,0},n})+2\mbbh_{n}^{(m_{1},m_{2,0})}(\tilde{\theta}_{m_{1},m_{2,0},n})+(p_{\alpha_{m_{1,0}}}-p_{\alpha_{m_{1}}})\log n \nn\\
&\qquad+p_{\beta_{m_{2,0}}}\log\frac{h(\tilde{\alpha}_{m_{1,0},n})}{h(\tilde{\alpha}_{m_{1},n})}\geq0\bigg\} \nn\\
&=\mathbb{P}\bigg\{O_{p}(1)+o_{p}(1)\geq(p_{\alpha_{m_{1}}}-p_{\alpha_{m_{1,0}}})\log n\bigg\} \nn\\
&\to0 \nn
\end{align}
as $n\to\infty$.
In a similar way as above, we can show that the second term of the right-hand side of \eqref{se:mod.consis1} tends to zero, hence the claim is proved.
In the case of $m_{1}\neq m_{1,0}$ and $m_{2}=m_{2,0}$ or in the case of $m_{1}=m_{1,0}$ and $m_{2}\neq m_{2,0}$, the proof is similar and simpler.

\bigskip

\appendix

\section{Stochastic expansion of the quasi-marginal log likelihood}
\label{hm:sec_appen1}

We here step away from the main context and present a set of conditions under which a quasi-marginal log likelihood admits a Schwarz type stochastic expansion, by making use of \cite[Proof of Theorem 2.1]{JKM}.

\medskip

Let $\mbbh_{n}: \Theta\times \Omega\to\mbbr$ be a $\mcc^{3}(\Theta)$-random function where $\Theta\subset\mbbr^{p}$ is a bounded convex domain.
Set $\theta=(\al,\beta)\in\mbbr^{p_{\al}}\times\mbbr^{p_{\beta}}$,
and let $\tz=(\al_{0},\beta_{0})\in\Theta$ be a constant, and $D_{n}=D_{n}(\tz)=\diag\big(\sqrt{r_{1,n}}I_{p_{\al}},\, \sqrt{r_{2,n}}I_{p_{\beta}}\big)$, where $(r_{1,n})$ and $(r_{2,n})$ are positive sequences possibly depending on $\tz$ and satisfying that $r_{1,n}\wedge r_{2,n}\to\infty$ and that $r_{2,n}/r_{1,n}\to 0$ as $n\to\infty$. We then introduce the random field on $\mbbr^{p}$ associated with $\mbbh_{n}$:
\begin{equation}
\mbbz_{n}(u):=\exp\left\{ \mbbh_{n}(\tz+D_{n}^{-1}u) - \mbbh_{n}(\theta_{0}) \right\}.
\nn
\end{equation}
Here we set $\mbbz_{n}\equiv 0$ outside the set $\mbbu_{n}=\mbbu_{n}(\tz):=D_{n}(\Theta-\tz)\subset\mbbr^{p}$.
Let $\mfp(\theta)$ be a bounded prior probability density on $\Theta$, which is assumed to be continuous and positive at $\tz$.
Let $\Delta_{n}(\tz):=D_{n}^{-1}\p_{\theta}\mbbh_{n}(\tz)$ and
\begin{equation}
\Gam_{0}:=\diag(\Gam_{1,0},\,\Gam_{2,0}),
\nonumber
\end{equation}
where $\Gam_{1,0}\in\mbbr^{p_{\al}}\otimes\mbbr^{p_{\al}}$ and $\Gam_{2,0}\in\mbbr^{p_{\beta}}\otimes\mbbr^{p_{\beta}}$ are a.s. positive definite random matrices.
Further, let
\begin{align}
\mbby_{1,n}(\theta) &:=\frac{1}{r_{1,n}}\{\mbbh_{n}(\al,\beta) - \mbbh_{n}(\al_{0},\beta)\},
\nn \\
\mbby_{2,n}(\beta) &:= \frac{1}{r_{2,n}} \{\mbbh_{n}(\al_{0},\beta) - \mbbh_{n}(\al_{0},\beta_{0})\},
\nn
\end{align}
and $\mbby_{1}(\al)$ and $\mbby_{2}(\beta)$ be $\mbbr$-valued random functions.
Finally, we introduce the quadratic random field
\begin{equation}
\mbbz^{0}_{n}(u) =\exp\bigg( \Delta_{n}(\tz)[u] - \frac{1}{2}\Gam_{0}[u,u] \bigg).
\nonumber
\end{equation}

\begin{thm}
In addition to the aforementioned setting, suppose the following conditions.
\begin{itemize}
\item There exists an a.s. positive definite random matrix $\Sig_{0}\in\mbbr^{p}\otimes\mbbr^{p}$ such that
\begin{equation}
\left(\Delta_{n}(\tz),\, -D_{n}^{-1}\p_{\theta}^{2}\mbbh_{n}(\tz)D_{n}^{-1}\right) \overset{\mcl}\to \Big( \Sig_{0}^{1/2}\eta,\, \Gam_{0}\Big), \label{appendix.thm1}
\end{equation}
where $\eta \sim N_{p}(0,I_{p})$ is a random variable defined on an extension of the original probability space.

\item We have
\begin{align}
& \sup_{\beta}\bigg\| \frac{1}{\sqrt{r_{1,n}}} \p_{\al}\mbbh_{n}(\al_{0},\beta) \bigg\| = O_{p}(1), \label{appendix.thm2}\\
& \sup_{\beta} \bigg\| -\frac{1}{r_{1,n}}\p_{\al}^{2}\mbbh_{n}(\al_{0},\beta) - \Gam_{1,0}\bigg\|=o_{p}(1), \label{appendix.thm3}\\
& \sup_{\theta}\left\| D_{n}^{-1}\p_{\theta}^{3}\mbbh_{n}(\theta)D_{n}^{-1} \right\| = O_{p}(1). \label{appendix.thm4}
\end{align}

\item There exists a constant $q\in(0,1)$ for which
\begin{equation}
r_{1,n}^{q/2}\sup_{\theta}\left|\mbby_{1,n}(\theta)-\mbby_{1}(\al)\right| \vee r_{2,n}^{q/2}\sup_{\beta}\left|\mbby_{2,n}(\beta)-\mbby_{2}(\beta)\right| \cip 0.
\label{appendix.thm5}
\end{equation}

\item There exists an a.s. positive random variable $\chi_{0}$ such that for each $\kappa>0$,
\begin{equation}
\sup_{\al;\, |\al-\al_{0}|\ge\kappa}\mbby_{1}(\al) \vee 
\sup_{\beta;\, |\beta-\beta_{0}|\ge\kappa}\mbby_{2}(\beta) \le -\chi_{0}\kappa^{2}\qquad \text{a.s.}
\label{appendix.thm6}
\end{equation}

\end{itemize}
Then, any $\tes \in \argmax\mbbh_{n}$ satisfies that $D_{n}(\tes-\tz)\cil \Gam_{0}^{-1}\Sigma_{0}^{1/2}\eta$, and we have
\begin{align}
\int \bigg|\mbbz_{n}(u)\pi(\tz+D_{n}^{-1}u) - \mbbz_{n}^{0}(u)\pi(\tz) \bigg|du \cip 0
\nn
\end{align}
and
\begin{align}
\mfL_{n}:=\log\bigg(\int_{\Theta}\exp\{\mbbh_{n}(\theta)\}\mfp(\theta)d\theta\bigg)
&=\mbbh_{n}(\tz) + \log|D_{n}^{-1}| + \log\mfp(\tz) + \frac{p}{2}\log(2\pi) \nn\\
&{}\qquad -\frac{1}{2}\log|\Gam_{0}| + \frac{1}{2}\Gam_{0}^{-1}\left[\D_{n}(\tz)^{\otimes 2}\right] + o_{p}(1).
\nonumber
\end{align}
Further, if $\log|D_{n}^{-1}| = \log|D_{n}^{-1}(\tes)| + o_{p}(1)$ and $\log\mfp(\tz)=\log\mfp(\tes)+o_{p}(1)$, then
\begin{align}
\mfL_{n}
&=\mbbh_{n}(\tes) + \log|D_{n}^{-1}(\tes)| + \log\mfp(\tes) + \frac{p}{2}\log(2\pi) \nn\\
&{}\qquad -\frac{1}{2}\log|-D_{n}^{-1}\p_{\theta}^{2}\mbbh_{n}(\tes)D_{n}^{-1}| + o_{p}(1).
\nonumber
\end{align}
\label{hm:thm.qbic}
\end{thm}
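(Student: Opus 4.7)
The plan is to follow the scheme of \cite{JKM}, adapting it to the two-rate scaling built into $D_n$. Conceptually, one rewrites
\begin{equation}
\exp(\mathfrak{L}_n) = \exp\{\mbbh_n(\tz)\}\,|D_n^{-1}|\int_{\mbbu_n}\mbbz_n(u)\,\mfp(\tz+D_n^{-1}u)\,du,
\nonumber
\end{equation}
so the core task is to establish the $L^1(du)$ convergence of the integrand on the right to $\mfp(\tz)\mbbz_n^0(u)$, after which $\int\mbbz_n^0(u)du = (2\pi)^{p/2}|\Gam_0|^{-1/2}\exp(\tfrac12\Gam_0^{-1}[\D_n(\tz)^{\otimes 2}])$ gives the first expansion immediately by continuity of $\log$. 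The second expansion is then deduced from the first by Taylor expanding $\mbbh_n$ around $\tes$ and using $\p_\theta\mbbh_n(\tes)=0_p$ together with the plug-in hypotheses on $D_n^{-1}$ and $\mfp$.

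For the consistency of $\tes=(\aes,\bes)$, I would argmax both coordinates separately: \eqref{appendix.thm5} together with \eqref{appendix.thm6} yields $\aes\cip\al_0$ because the $\beta$-dependence of $\mbby_{1,n}$ is asymptotically negligible while $\mbby_1$ has a strict quadratic maximum at $\al_0$; then applying the same argument to $\mbby_{2,n}(\cdot)$ with $\al$ replaced by $\aes$ (the additional error being controlled by \eqref{appendix.thm2}$+$\eqref{appendix.thm3} and a Lipschitz-in-$\al$ bound on $\mbby_{2,n}$ that follows from \eqref{appendix.thm4}) yields $\bes\cip\beta_0$. Standard Taylor expansion of $\p_\theta\mbbh_n(\tes)=0$ around $\tz$ using \eqref{appendix.thm1} and \eqref{appendix.thm4} then gives $D_n(\tes-\tz)\cil\Gam_0^{-1}\Sig_0^{1/2}\eta$.

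The crux is the $L^1$ estimate. I would fix a slowly growing radius $\delta_n\to\infty$ with $\delta_n=o(r_{1,n}^{q/4}\wedge r_{2,n}^{q/4})$ and split $\mbbu_n = B_n \sqcup B_n^c$, with $B_n=\{u:\|u\|\le\delta_n\}$. On $B_n$ a third-order Taylor expansion of $\log\mbbz_n(u)$, combined with \eqref{appendix.thm1}, \eqref{appendix.thm3} and the uniform cubic bound \eqref{appendix.thm4}, gives the pointwise approximation $\log\mbbz_n(u) = \Delta_n(\tz)[u] -\tfrac12\Gam_0[u,u] + o_p(\delta_n^3 (r_{1,n}^{-1/2}\vee r_{2,n}^{-1/2}))$, which is uniform in $u\in B_n$; continuity of $\mfp$ at $\tz$ and dominated convergence (using a deterministic Gaussian majorant for $\mbbz_n^0$ together with tightness of $\Delta_n(\tz)$) then close the $B_n$-contribution. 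On $B_n^c$ I would exploit the mixed-rate separation: for $u=(u_1,u_2)$ with $\tz+D_n^{-1}u \in\Theta$, one of $\|\al-\al_0\|=r_{1,n}^{-1/2}\|u_1\|$ or $\|\beta-\beta_0\|=r_{2,n}^{-1/2}\|u_2\|$ exceeds $\delta_n/(2\sqrt{p})$ times the appropriate rate; by \eqref{appendix.thm5}--\eqref{appendix.thm6} we then have $r_{1,n}^{-1}\log\mbbz_n(u)\le -\chi_0\|\al-\al_0\|^2+o_p(r_{1,n}^{-q/2})$ or the corresponding bound with $r_{2,n}$, and integrating these Gaussian-type tails with respect to $du$ (after the linear change back to $\theta$ contributing $|D_n|$) together with boundedness of $\mfp$ shows that $\int_{B_n^c}\mbbz_n\mfp\,du\cip 0$; the corresponding bound for $\int_{B_n^c}\mbbz_n^0\mfp(\tz)du$ is an immediate Gaussian tail estimate.

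The main obstacle is this last step on $B_n^c$: unlike in \cite{Yos11}, we do not assume a PLDI here, so one cannot get $L^q$ control of the tail integral directly, and the argument must be executed purely in probability, exploiting that \eqref{appendix.thm5} holds with the exponent $q<1$ that is strong enough (after multiplication by $\delta_n^2$) to dominate the $o_p$ error inside $\mbby_1,\mbby_2$ uniformly on $B_n^c$, while \eqref{appendix.thm6} supplies the quadratic separation that makes the Gaussian bound integrable over the unbounded complement. Once this is in hand, both expansions follow as described.
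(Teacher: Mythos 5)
Your overall scheme---the change of variables $\theta=\tz+D_{n}^{-1}u$, the reduction to the $L^{1}(du)$ convergence of $\mbbz_{n}(u)\mfp(\tz+D_{n}^{-1}u)$ to $\mbbz_{n}^{0}(u)\mfp(\tz)$, the Gaussian integration of $\mbbz_{n}^{0}$, and the Taylor step for the plug-in version---is exactly the route the paper takes (it defers the details to the proof of Theorem 2.1 of \cite{JKM}), and your treatment of the ball $B_{n}=\{\|u\|\le\delta_{n}\}$ and of the two final expansions is fine. The genuine gap is in the tail estimate on $B_{n}^{c}$, which you run entirely off \eqref{appendix.thm5}--\eqref{appendix.thm6}. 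Multiplying the uniform error in \eqref{appendix.thm5} back by $r_{1,n}$ gives $\log\mbbz_{n}(u)\le-\chi_{0}\|u_{1}\|^{2}+o_{p}(r_{1,n}^{1-q/2})$, and since $q<1$ we have $q/2<1-q/2$, so your choice $\delta_{n}=o(r_{1,n}^{q/4}\wedge r_{2,n}^{q/4})$ yields $\delta_{n}^{2}=o(r_{1,n}^{q/2})=o(r_{1,n}^{1-q/2})$: the additive error \emph{dominates} the quadratic decay throughout the band $\delta_{n}\le\|u_{1}\|\lesssim r_{1,n}^{1/2-q/4}$, so the claimed Gaussian tail bound is vacuous there (your parenthetical ``strong enough after multiplication by $\delta_{n}^{2}$'' goes the wrong way). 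The situation is worse in the region where $\al$ is near $\al_{0}$ but $\beta$ is far: the $\al$-contribution $r_{1,n}(\mbby_{1,n}-\mbby_{1})=o_{p}(r_{1,n}^{1-q/2})$ need not be $o(r_{2,n})$ (take $r_{1,n}=n$, $r_{2,n}=nh_{0}$ and $q$ near $1$), so it can swamp the whole $\beta$-identifiability term $-\chi_{0}r_{2,n}\|\beta-\beta_{0}\|^{2}$, whose magnitude is at most $Cr_{2,n}$ on the bounded $\Theta_{\beta}$.

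The repair is the standard three-region split, and it is precisely why \eqref{appendix.thm2}--\eqref{appendix.thm4} are stated with $\sup_{\beta}$ (resp.\ $\sup_{\theta}$). Reserve \eqref{appendix.thm5}--\eqref{appendix.thm6} for \emph{fixed}-$\ep$ separations, where $-\chi_{0}\ep^{2}r_{i,n}$ beats $o_{p}(r_{i,n}^{1-q/2})$ simply because $1-q/2<1$. On $\{\delta_{n}\le\|u\|,\ \|\theta-\tz\|\le\ep\}$, use the second-order Taylor bound coming from \eqref{appendix.thm1}, \eqref{appendix.thm3} and \eqref{appendix.thm4} to obtain the integrable majorant $\mbbz_{n}(u)\le\exp\{O_{p}(1)\|u\|-c\|u\|^{2}\}$ on an event of probability arbitrarily close to one. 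On $\{\|\al-\al_{0}\|\le\ep,\ \|\beta-\beta_{0}\|\ge\ep'\}$, bound the $\al$-increment $\mbbh_{n}(\al,\beta)-\mbbh_{n}(\al_{0},\beta)$ by $O_{p}(1)\|u_{1}\|-c\|u_{1}\|^{2}\le O_{p}(1)$ uniformly in $\beta$ via \eqref{appendix.thm2}--\eqref{appendix.thm4}, which the term $-\chi_{0}(\ep')^{2}r_{2,n}$ from \eqref{appendix.thm6} then kills. With these replacements the rest of your argument (including the consistency and the asymptotic distribution of $D_{n}(\tes-\tz)$) goes through.
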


Theorem \ref{hm:thm.qbic} can apply to general locally asymptotically quadratic models under weaker conditions compared with \cite[Theorem 3.7]{EguMas18a}. A formal extension of Theorem \ref{hm:thm.qbic} to cases of more than two rates is straightforward.

\bigskip
\noindent
\textbf{Acknowledgements.} 
The authors thank the two anonymous referees for careful reading and valuable comments which helped to greatly improve the paper.
They also grateful to Prof. Isao Shoji for sending us his unpublished version of manuscript \cite{Sho18}, which deals with a calibration problem of the sampling frequency from a completely different point of view from ours,
and to Yuma Uehara for a helpful comment on Theorem \ref{se:thm.pldi}.
This work was partially supported by JST CREST Grant Number JPMJCR14D7, Japan.

\bigskip 
\bibliographystyle{abbrv} 

\end{document}